\documentclass[11pt]{amsart}
\usepackage[numbers]{natbib}
\usepackage{graphicx,amsmath,epsfig,amssymb,amsfonts,amsthm,enumerate,verbatim,todonotes,hyperref,mathtools, dsfont}
\usepackage{amsbsy,mathrsfs}
\usepackage[T1]{fontenc}

\usepackage{tikz,tikz-cd}
\usetikzlibrary{matrix,arrows}

\newtheorem*{rep@theorem}{\rep@title}
\newcommand{\newreptheorem}[2]{%
\newenvironment{rep#1}[1]{%
 \def\rep@title{#2 \ref{##1}}%
 \begin{rep@theorem}}%
 {\end{rep@theorem}}}
\makeatother

\newtheorem{theorem}{Theorem}[section]
\newtheorem{corollary}[theorem]{Corollary}
\newtheorem{lemma}[theorem]{Lemma}
\newtheorem{propo}[theorem]{Proposition}

\newtheorem{question}[theorem]{Question}

\newtheorem{examp}[theorem]{Example}


\theoremstyle{definition}
\newtheorem{definition}[theorem]{Definition}

\newreptheorem{theorem}{Theorem}
\newreptheorem{lemma}{Lemma}
\newreptheorem{definition}{Definition}
\newreptheorem{propo}{Proposition}
\newreptheorem{examp}{Example}
\newreptheorem{notat}{Notation}
\newreptheorem{corollary}{Corollary}



\usepackage{enumitem}
\setenumerate[1]{label=\arabic*)}

\newcommand{\restrict}{\mathord{\upharpoonright}}

\title{A New Topological Generalization of Descriptive Set Theory}
\author[Iv\'an Ongay-Valverde]{Iv\'an Ongay-Valverde$^{1}$}
\author[Franklin D. Tall]{Franklin D. Tall$^{1}$}
\thanks{$^{1}$Supported by NSERC grant A-7354.\\
2022 AMS classification Primary 54H05, 03E15; Secondary 03E15,
03E55, 03E60, 54A35, 54C10, 54C60, 54D40.\\
\textit{K-analytic, generalized descriptive set theory, Menger, Hurewicz, $\sigma$-compact, $\sigma$-projective determinacy, upper semi-continuous compact-valued multifunction.}}
\date{First draft: 2021-12-01 \\ Current Draft: \today }
\begin{document}

\begin{abstract}
We introduce a new topological generalization of the $\sigma$-projective hierarchy, not
limited to Polish spaces. Earlier attempts have replaced $^{\omega}\omega$ by $^{\kappa}\kappa$, for $\kappa$ regular uncountable, or
replaced \emph{countable} by \emph{$\sigma$-discrete}. Instead we close the usual $\sigma$-projective sets
under continuous images and perfect preimages together with countable unions. The natural set-theoretic axiom to apply is \emph{$\sigma$-projective
determinacy}, which follows from large cardinals. Our goal is to generalize the known results for
$K$-analytic spaces (continuous images of perfect preimages
of $^{\omega}\omega$) to these more general settings. We have achieved some successes in the
area of \emph{Selection Principles} \textemdash the general theme is that nicely defined Menger spaces are
Hurewicz or even $\sigma$-compact. The $K$-analytic results are true in ZFC; the more
general results have consistency strength of only an inaccessible.    
\end{abstract}

\maketitle

\section{Introduction}\label{Section intro}

Classical descriptive set theory studies sets of reals, or, more generally, subsets of a Polish (i.e. separable completely metrizable) space, which are in some sense definable \textemdash think of Borel, analytic, etc. See \cite{Kec95}. Due to its intrinsic interest and its usefulness in many areas of mathematics, there have been a number of attempts to generalize it, e.g. Choquet \cite{Ch1959}, Frol\'ik \cite{Fro63}, Sion \cite{Sion}, Stone \cite{Stone1963}, etc. Choquet's attempt retained the central role of the space $\mathds{P}$ of irrationals, but weakened continuous maps to \emph{upper semi-continuous compact-valued} ones (defined later). Frol\'ik, instead, retained continuity but weakened $\mathds{P}$ to any Lindel\"of \v{C}ech-complete space. Jayne (see \cite{RJ80}) observed that, in fact, these two approaches were equivalent, yielding the $K$-analytic spaces:

\begin{definition}\label{def:K-analytic}
\cite{RJ80} A space is \emph{$K$-analytic} (we say \emph{$K$-$\mathbf{\Sigma}^1_1$}) if it is the continuous image of a Lindel\"of \v{C}ech-complete space.
\end{definition}

The $K$-analytic spaces have been applied widely in functional analysis \textemdash see \cite{KKP}. 

Stone and others went in a different direction, generalizing ``countable'' to ``$\sigma$-discrete'', while retaining metrizability. This effort was less successful. Recently, set theorists have generalized the role of $\mathds{P}$ in its equivalent $ ^{\omega}\omega$ form to $ ^{\kappa}\kappa$, for an arbitrary regular cardinal $\kappa$. Although this leads to interesting set theory, it is unlikely to find applications outside of logic. But set theorists have also generalized classical descriptive set theory to consider ``definable'' sets of reals more complicated than Borel or analytic ones. The \emph{$\sigma$-projective sets} are obtained by extending the Borel sets by using continuous real-valued functions, countable unions, and complementation within $\mathds{R}$. Here is the definition. 

\begin{definition}
\cite{Kec95} A subset of reals is \emph{analytic} ($\mathbf{\Sigma}_1^1$) if it is a continuous image of $\mathds{P}$ (equivalently, of any Borel set). It is \emph{co-analytic ($\mathbf{\Pi}^1_1$)} if its complement is analytic.
\end{definition}

\begin{definition}
\cite{Kec95} In general, a subset of reals is $\mathbf{\Pi}^1_{\xi}$, for $\xi<\omega_1$, if its complement is $\mathbf{\Sigma}^1_{\xi}$ and a subset of reals is \emph{$\mathbf{\Sigma}^1_{\xi+1}$} if it is the continuous image of a $\mathbf{\Pi}^1_{\xi}$ set. For $\alpha\leq\omega_1$ a limit ordinal, a subset of reals $X$ is \emph{$\mathbf{\Sigma}^1_{\alpha}$} if there is a $Y_i\in \mathbf{\Sigma}^1_{\xi_i}$, $i<\omega$, $\xi_i<\alpha$ such that $X=\bigcup_{i\in \omega} Y_i$.\\

The class $\mathbf{\Sigma}^1_{\omega}$ is known as the \emph{projective sets} and the class $\mathbf{\Sigma}^1_{\omega_1}$ as the \emph{$\sigma$-projective sets}. Since properties such as \emph{analytic} have been used to refer to subspaces of Polish spaces as well as to general topological spaces, we will use the word \emph{sets} to refer to subspaces of Polish spaces.
\end{definition}

We continue the standard sloppiness of interchangeably speaking of $\mathds{P}$, $\mathds{R}$, $[0, 1]$, and $[0,1]^{\omega}$, since their Borel, projective, etc., structures are isomorphic.

An even more far-reaching extension of the collection of ``definable'' sets of reals is those in $L(\mathds{R})$ (see \cite{Kan94}). It is easy to show that the $\sigma$-projective sets are all in $L(\mathds{R})$. In fact, the $\sigma$-projective sets are precisely those sets of reals which are in $L_{\omega_1}(\mathds{R})$ \cite{AMS21}.

These more extensive collections of definable sets of reals were not easily handled by classical descriptive set theory, but the advent of determinacy axioms consistent with ZFC has made them quite tractable. We will often be assuming such axioms. The novelty of our approach is that we follow Choquet, Frol\'ik, and Rogers-Jayne, but replace $\mathds{P}$ with $\sigma$-projective sets.
Like Frol\'ik, we do not require our continuous maps to be real-valued. There is much to be investigated using a functional analysis lens. We have so far mainly looked at these theories from the viewpoint of \emph{Selection Principles}.

There has been much work in Selection Principles considering the successively stronger Lindel\"of properties: Menger, Hurewicz, $\sigma$-compact. We will give the definitions, but read \cite{TsabanSurvey} for a thorough introduction and survey. 

\begin{definition}
A topological space $X$ is \emph{Menger} if given any countable sequence of open covers $\langle\mathcal{U}_1, \mathcal{U}_2,\ldots, \mathcal{U}_n,\ldots\rangle$ of $X$ there are finite subsets $\mathcal{V}_{i}\subseteq \mathcal{U}_i$ such that $\bigcup_{i\in\omega}\mathcal{V}_{i}$ is also an open cover of $X$.
\end{definition}

\begin{definition}
A topological space $X$ is \emph{Hurewicz} if given any countable sequence of open covers $\langle\mathcal{U}_1, \mathcal{U}_2,\ldots, \mathcal{U}_n,\ldots\rangle$ of $X$, none containing a finite subcover, there are finite subsets $\mathcal{V}_{i}\subseteq \mathcal{U}_i$ such that $\bigcup_{i\in\omega}\mathcal{V}_{i}$ is a \emph{$\gamma$-cover} of $X$. An open cover $\mathcal{O}$ is a \emph{$\gamma$-cover} of $X$ if $\mathcal{O}$ is infinite and every element of $X$ appears in all but finitely members of $\mathcal{O}$.
\end{definition}


(Note Arhangel’ski\u{\i} uses ``Hurewicz'' for the property we now call ``Menger''.)
Menger analytic sets are $\sigma$-compact \cite{Hu}; for $\sigma$-projective sets and, indeed, for sets of reals in $L(\mathds{R})$, it is  consistent from an inaccessible cardinal \cite{Fe}, \cite{DT} and follows from larger cardinals \cite{MS,A} that determinacy axioms hold which imply such Menger sets are $\sigma$-compact.

There has been a series of papers showing that nicely defined Menger sets of reals need not be $\sigma$-compact if $V = L$ is assumed \cite{FM}, but are if determinacy is assumed instead \cite{FM}, and then considering the situation for more general Menger spaces; \cite{TT}, \cite{TTT21}, \cite{Tall20}. Assuming the Axiom of Choice, however, there are Menger sets of reals that are not $\sigma$-compact, indeed not even Hurewicz. See the survey \cite{TsabanSurvey} for references.

One would expect it to be easy to construct a Menger \emph{space} that is not Hurewicz but the second author noticed that, whenever he tried to define a Menger space that was not $\sigma$-compact, it wound up being Hurewicz. He therefore conjectured that “definable” Menger spaces are Hurewicz. Indeed, he proved that $K$-analytic Menger spaces are Hurewicz \cite{Tall20}. However, $K$-analytic Menger spaces need not be $\sigma$-compact \cite{Tall20}.

Just as the $K$-analytic spaces generalize analytic sets, we can define spaces generalizing the $\sigma$-projective sets. Every analytic set is a $K$-analytic space; a $K$-analytic metrizable space is analytic \cite{RJ80}, see Corollary 5.8. (We don't have to require separability, because $K$-analytic spaces are Lindel\"of and Lindel\"of metrizable spaces are separable.) Our generalizations of $\sigma$-projective will have analogous properties.

Continuous functions (maps) are integral to topology; also very important are the \emph{perfect} maps, which are continuous, send closed sets to closed sets, and for which the inverses of points are compact. Many topological properties are preserved by perfect maps and/or their inverses. A topological property is called \emph{perfect} if it is preserved by both perfect maps and their inverses.

\begin{definition}
Given a family $\Gamma$ of subsets of reals, \emph{$\Gamma$ determinacy (Det($\Gamma$))} is the statement, ``given $A\in \gamma$, the perfect information game of countable length and payoff set $A$ has a winning strategy for one of the players''. Then $\sigma$-projective determinacy\textemdash for short, \emph{$\sigma$-PD}\textemdash is Det($\{X\subseteq \mathds{R}: X \mbox{ is $\sigma$-projective}\}$).
\end{definition}

 We expect\textemdash and it is true\textemdash that:

\begin{propo}\label{prop:Menger proj is cpct}
$\sigma$-PD implies every Menger $\sigma$-projective set is $\sigma$-compact.
\end{propo}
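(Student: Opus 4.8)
The model to imitate is Hurewicz's theorem that a Menger analytic set of reals is $\sigma$-compact \cite{Hu}, in its sharp \emph{dichotomy} form (Hurewicz, Saint-Raymond): every analytic separable metrizable space is \emph{either} $\sigma$-compact \emph{or} contains a subspace that is closed in it and homeomorphic to $^{\omega}\omega$. The key observation is that the second alternative is incompatible with the Menger property: $^{\omega}\omega$ is not Menger — given the clopen partitions $\mathcal{U}_n=\{[s]:s\in{}^{n}\omega\}$, any Menger selection would yield finite $F_n\subseteq{}^{n}\omega$ whose cylinders cover $^{\omega}\omega$, which an obvious diagonalization defeats — and the Menger property is inherited by closed subspaces, so a Menger space contains no closed copy of $^{\omega}\omega$. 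Thus the dichotomy instantly gives ``Menger analytic $\Rightarrow$ $\sigma$-compact'', and I would prove the Proposition by running the identical argument with ``analytic'' replaced by ``$\sigma$-projective'' throughout. Concretely, the whole burden becomes the lemma: \emph{under $\sigma$-PD, every $\sigma$-projective subspace $A$ of a Polish space is either $\sigma$-compact or contains a closed-in-$A$ copy of $^{\omega}\omega$}; applied to a Menger $\sigma$-projective set of reals the second alternative is killed, and we are done.

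I would obtain that lemma by lifting the analytic proof, with $\sigma$-PD doing the work that ZFC combinatorics does for analytic sets, via either of two standard routes. (i) \emph{Games.} Set up the Hurewicz-style covering game — equivalently the fusion game witnessing Miller (superperfect-tree) measurability — in which one player builds a point of the Polish space together with a candidate superperfect tree while the opponent tries to drive the point out of $A$; the payoff is a countable Boolean combination of ``the real built lies in $A$'' with closed conditions, hence again $\sigma$-projective, hence determined under $\sigma$-PD. A win for the tree-builder produces a superperfect $T$ with $[T]\subseteq A$, a closed-in-$A$ copy of $^{\omega}\omega$; a win for the opponent is converted, by the usual covering bookkeeping, into a $\sigma$-compact decomposition of $A$. (ii) \emph{Suslin representations.} Scale propagation runs through the entire $\sigma$-projective hierarchy under $\sigma$-PD, just as through the projective hierarchy under PD, so every $\sigma$-projective set is then $\kappa$-Suslin for some ordinal $\kappa$; and the Hurewicz dichotomy holds in ZFC for $\kappa$-Suslin sets, since Hurewicz's tree argument never used $\kappa=\aleph_0$. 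Either way one arrives at the lemma.

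The step I expect to be the real obstacle is not this lifting but the distinction — already delicate in the classical case — between the \emph{absolute} dichotomy (``$A$ is covered by a $\sigma$-compact subset of the ambient Polish space $X$'') and the \emph{intrinsic} one (``$A$ is itself $\sigma$-compact''), the latter being what we need. These genuinely differ: when $X$ is compact the absolute form is vacuous, yet $\mathds{P}\subseteq[0,1]$ is a non-$\sigma$-compact analytic set with no closed-in-$[0,1]$ copy of $^{\omega}\omega$; and one cannot pass from the absolute to the intrinsic form naively, since restricting to a compact piece $K_n$ of a cover makes ``$\sigma$-compact subset of $K_n$'' say nothing while $A\cap K_n$ can still fail to be $\sigma$-compact. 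So the game (or the Suslin tree analysis) must be run \emph{intrinsically} from the start — as in Hurewicz's and Saint-Raymond's treatments of the analytic case — so that the ``covering'' side of the dichotomy yields a $\sigma$-compact decomposition of $A$ and not merely a $\sigma$-compact superset. That transfer is exactly where the classical arguments earn their keep, and it goes through at the $\sigma$-projective level once the relevant game is determined under $\sigma$-PD. The remaining ingredients — $^{\omega}\omega$ not Menger, Menger closed-hereditary, the payoff staying $\sigma$-projective under the Boolean operations used — are routine.
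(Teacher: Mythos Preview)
Your proposal is correct and is the standard argument. The paper itself does not give a self-contained proof of this proposition: it simply says ``This was proved for `projective' in \cite{TT}; the same proof works.'' What you have written is an accurate and somewhat more detailed account of what that proof is, namely the Hurewicz--Kechris--Saint-Raymond dichotomy lifted from analytic to $\sigma$-projective sets by determinacy. The paper itself later invokes exactly this dichotomy as a black box (citing \cite{Ke77}) in the proof of its Theorem~\ref{perfect HD}: ``using $\sigma$-PD, since $A$ is not $\sigma$-compact there is a closed subset of $A$ homeomorphic to $\omega^{\omega}$.'' So your route (i) via games and route (ii) via scales and $\kappa$-Suslin sets are both legitimate ways of unpacking the citation, and your emphasis on needing the \emph{intrinsic} rather than the \emph{absolute} form of the dichotomy is well placed---that is indeed the content of the Kechris result being cited.
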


This was proved for ``projective'' in \cite{TT}; the same proof works.

\begin{definition}\label{def pp}
Given a topological property $P$, a space $X$ is \emph{projectively $P$} if given any continuous function $f:X\rightarrow \mathds{R}$, $f(X)$ has property $P$.
\end{definition}

We will be interested in spaces which are projectively $\sigma$-compact, projectively $\sigma$-projective, or projectively countable. Some authors replace ``projectively'' by ``functionally''. Some replace $\mathds{R}$ by $[0, 1]$ or any separable metrizable space. These are all equivalent.

In analogy to one of the original definitions of $K$-analyticity, let us consider the class $\pmb{\mathscr{K}}$ of images of $\sigma$-projective sets under \emph{Upper Semi-Continuous Compact-Valued multifunctions}. Below is the definition of \emph{USCCV multifunctions}. We call members of $\pmb{\mathscr{K}}$ \emph{$K$-$\sigma$-projective}.

\begin{definition}
Given a set $X$ we will denote by $\mathcal{P} (X)$ its power set, the collection of all subsets of $X$. We say that a function $F:X\rightarrow \mathcal{P}(Y)$ (a \emph{multifunction}) is \emph{Upper Semi-Continuous} if given a closed set $C\subseteq Y$, its \emph{outer inverse} 
\[F^{-1}(C)=\left\{x\in X : F(x) \,\cap\, C\neq \emptyset\right\}\]
is closed.

Equivalently, given an open set $U\subseteq Y$, its \emph{inner inverse} 
\[F^{-1}_{in}(U)=\left\{x\in X : F(x)\subseteq U\right\}\]
is open.
\end{definition}

$F$ is \emph{compact-valued} if $F(x)$ is compact for all $x \in X$. $F$ is \emph{USCCV} if it is upper semi-continuous and compact-valued.

This may be a bit difficult to absorb, but more pleasantly we have,

\begin{definition}\label{def CIPP}
If the space $Y$ is the continuous image of a space $Z$ such that there is a perfect surjective function $f:Z\rightarrow X$, then we say that $Y$ is a \emph{CIPP (Continuous Image of a Perfect Preimage)} of $X$.
\end{definition}

Rogers and Jayne \cite{RJ80} proved:

\begin{repcorollary}{CIPP and USCCV}
Given spaces $X$ and $Y$, $Y$ is a USCCV image of $X$ if and only if it is a CIPP of $X$.
\end{repcorollary}

An easy corollary is:

\begin{corollary}[Folklore]\label{USCCV lin}
USCCV images of Lindel\"of spaces are Lindel\"of,
\end{corollary}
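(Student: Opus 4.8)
The plan is to pass through the CIPP description supplied by the preceding Rogers--Jayne corollary (``CIPP and USCCV'') and then assemble the result from two standard preservation facts. By that corollary, if $Y$ is a USCCV image of a Lindel\"of space $X$, then there are a space $Z$, a perfect surjection $f\colon Z\to X$, and a continuous surjection $g\colon Z\to Y$. So it suffices to prove: (i) a perfect preimage of a Lindel\"of space is Lindel\"of, whence $Z$ is Lindel\"of; and (ii) a continuous image of a Lindel\"of space is Lindel\"of, whence $Y$ is Lindel\"of. Step (ii) is immediate: given an open cover of $Y$, pull it back along $g$, extract a countable subcover of $Z$ by Lindel\"ofness, and push that family forward.

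For step (i), let $\mathcal{U}$ be an open cover of $Z$. For each $x\in X$ the fiber $f^{-1}(x)$ is compact, so it is contained in a union $V_x$ of finitely many members of $\mathcal{U}$. Because $f$ is a closed map, $W_x\coloneqq X\setminus f(Z\setminus V_x)$ is open, it contains $x$, and $f^{-1}(W_x)\subseteq V_x$. The sets $\{W_x : x\in X\}$ cover $X$; choose a countable subcover $\{W_{x_n} : n\in\omega\}$. Then $\{f^{-1}(W_{x_n}) : n\in\omega\}$ covers $Z$, and since each $f^{-1}(W_{x_n})$ lies inside the finite subfamily of $\mathcal{U}$ with union $V_{x_n}$, the countable union $\bigcup_{n\in\omega}\{\text{those finitely many members of }\mathcal{U}\}$ is a countable subcover of $\mathcal{U}$. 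Hence $Z$ is Lindel\"of.

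I expect no genuine obstacle here; the only place any care is needed is step (i), where the closedness of $f$ is precisely what produces an open neighborhood $W_x$ ``fat enough'' that its preimage is trapped inside finitely many pieces of the cover. For completeness one may instead give a one-shot argument directly from the USCCV data, bypassing $Z$: given a USCCV surjection $F\colon X\to\mathcal{P}(Y)$ and an open cover $\mathcal{U}$ of $Y$, for each $x$ cover the compact set $F(x)$ by a union $U_x$ of finitely many members of $\mathcal{U}$; then $F^{-1}_{in}(U_x)$ is open and contains $x$, so $\{F^{-1}_{in}(U_x):x\in X\}$ covers $X$, a countable subcover $\{F^{-1}_{in}(U_{x_n})\}$ yields that every $y\in Y$ lies in some $F(x)\subseteq U_{x_n}$, and the corresponding countable union of finite subfamilies of $\mathcal{U}$ covers $Y$.
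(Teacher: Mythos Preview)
Your proof is correct and follows essentially the same route as the paper: invoke the CIPP characterization, then use that Lindel\"ofness is preserved by perfect preimages and by continuous images. The paper simply cites these two preservation facts in one line (``Continuous functions preserve Lindel\"of. Lindel\"of is a perfect property.''), whereas you spell them out and also supply an alternative direct argument via inner inverses; both are fine.
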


\begin{proof}
Continuous functions preserve Lindel\"of. Lindel\"of is a perfect property.
\end{proof}

Rogers and Jayne \cite{RJ80} proved that

\begin{reptheorem}{USCCV of irrationals}
    A space is $K$-analytic if and only if it is a USCCV image of $\mathbb{P}$.
\end{reptheorem}

Not so easy is

\begin{reptheorem}{U and C pp}
All members of $\pmb{\mathscr{K}}$ are projectively $\sigma$-projective.
\end{reptheorem}

We will postpone the proof until Section 5. 

$K$-analytic spaces are USCCV images of $\mathds{P}$; \emph{Lindel\"of $\Sigma$-spaces} are USCCV images of any separable metrizable space. Lindel\"of $\Sigma$-spaces show up in many contexts, which is why Tkachuk \cite{Tk10} calls them ``an omnipresent class''. In particular, the \emph{$K$-countably determined} spaces of \cite{RJ80} are precisely the Lindel\"of $\mathbf{\Sigma}$ spaces. $\pmb{\mathscr{K}}$ fits neatly between the class of $K$-analytic spaces and the class of Lindel\"of $\Sigma$-spaces. Assuming $\sigma$-PD, it is closer to the former class.

Most of the applications of $K$-analyticity use Lindel\"ofness. Thus $\pmb{\mathscr{K}}$ is a worthy generalization of the class of $K$-analytic sets, especially assuming $\sigma$-PD. However,
Menger spaces are trivially Lindel\"of, so in the context of getting Menger spaces to be
Hurewicz, much more inclusive classes than $\pmb{\mathscr{K}}$ can be considered. Indeed,

\begin{reptheorem}{Menger is Hurewicz}
In ZFC, Menger projectively analytic spaces are Hurewicz. $\sigma$-PD implies Menger projectively $\sigma$-projective spaces are Hurewicz.
\end{reptheorem}

We will then have:

\begin{repcorollary}{U Menger is Hur}
$\sigma$-PD implies Menger spaces in $\pmb{\mathscr{K}}$ are Hurewicz.
\end{repcorollary}

The $\sigma$-projective subsets of any Polish space are closed under continuous real-valued images, countable unions, and complements. Can we get a reasonable class $\pmb{\mathscr{L}}$  containing  all $K$-analytic spaces and closed under continuous images, countable unions and some sort of restricted complement? We discuss this question in Section \ref{K hier}.

\section{Definitions}\label{Section definitions}

All topological spaces that we work with will be completely regular.


\begin{definition}
Given a compactification $\mu X$ of $X$ the \emph{remainder of $X$ in $\mu X$} is $\mu X\setminus X$.
\end{definition}

\begin{definition}
We will denote by $\beta X$ the Stone-\v{C}ech compactification of $X$ and by $X^{\ast}$ its Stone-\v{C}ech remainder, $\beta X\setminus X$.
\end{definition}

\begin{definition}
A space is \emph{\v{C}ech-complete} if it is $G_{\delta}$ in its Stone-\v{C}ech compactification. Equivalently, if its Stone-\v{C}ech remainder is $\sigma$-compact.
\end{definition}

In addition to $\pmb{\mathscr{K}}$, we will consider two proper subclasses of it; 


\begin{definition}
Given a class of topological spaces $\Gamma$, $\pmb{\mathscr{C}}(\Gamma)$ is the class of all topological spaces that are continuous images of elements in $\Gamma$.
\end{definition}

\begin{definition}
Given a class of topological spaces $\Gamma$, $\pmb{\mathscr{P}}(\Gamma)$ is the class of all topological spaces that are perfect preimages of elements in $\Gamma$.
\end{definition}

\begin{definition}
Given a class of topological spaces $\Gamma$, $\pmb{\mathscr{K}}(\Gamma)$ is the class of all topological spaces that are USCCV images of elements in $\Gamma$.
\end{definition}

We call $\sigma\text{-} P$ the set of all $\sigma$-projective sets of reals (or subsets of $[0, 1]$, etc.). When referring to the classes $\pmb{\mathscr{K}}(\sigma\text{-} P)$, $\pmb{\mathscr{P}}(\sigma\text{-} P)$ or $\pmb{\mathscr{C}}(\sigma\text{-} P)$ we may also call them $\pmb{\mathscr{K}}$, $\pmb{\mathscr{P}}$ or $\pmb{\mathscr{C}}$ respectively.

\begin{definition}
A topological space $X$ is \emph{perfect} if every closed set of $X$ is a $G_{\delta}$. This is standard terminology. There is no relationship between perfect spaces and perfect functions.
\end{definition}

\section{A powerful tool: USCCV multifunctions}\label{USCCV Section}

In \cite{Why65} Whyburn refers to multifunctions (functions from a set to the power set of some set) as a useful approach to encompass a variety of topological results. Whyburn presents Upper Semi-Continuous Compact-Valued (USCCV) multifunctions as the right analogue of continuous functions in  multifunctions settings. A first important thing to notice about USCCV multifunctions is that every continuous function is (a single-valued) USCCV multifunction. Also that the outer (or inner) inverse of a multifunction is also a multifunction. Together with these, Whyburn showed that:

\begin{propo}{\emph{\cite{Why65}}}\label{Whyburn results}
\begin{enumerate}
 \item Images of compact sets under USCCV are compact (hence the composition of USCCV multifunctions is USCCV).
 \item A multifunction $F:X\rightarrow Y$ is closed and has compact inverse values for singletons if and only if $F^{-1}$ is USCCV.
\end{enumerate}
\end{propo}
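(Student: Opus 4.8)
The plan is to prove both parts by unwinding the definitions of the inner and outer inverses; only part (1) carries any real content, and it is a standard tube‑lemma‑style compactness argument.

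For part (1), I would start with a compact $K\subseteq X$ and an arbitrary open cover $\mathcal{U}$ of $F(K)=\bigcup_{x\in K}F(x)$. Since each $F(x)$ is compact, for each $x\in K$ pick a finite $\mathcal{U}_x\subseteq\mathcal{U}$ with $F(x)\subseteq U_x:=\bigcup\mathcal{U}_x$. The inner‑inverse formulation of upper semi‑continuity gives that $F^{-1}_{in}(U_x)$ is open, and it contains $x$, so $\{F^{-1}_{in}(U_x):x\in K\}$ is an open cover of $K$; compactness of $K$ yields $x_1,\dots,x_n$ with $K\subseteq\bigcup_{i\leq n}F^{-1}_{in}(U_{x_i})$, and then $\bigcup_{i\leq n}\mathcal{U}_{x_i}$ is a finite subcover of $F(K)$, so $F(K)$ is compact. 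For the parenthetical consequence, given USCCV multifunctions $F:X\to\mathcal{P}(Y)$ and $G:Y\to\mathcal{P}(Z)$, I would check that $(G\circ F)(x)=G(F(x))$ is compact by applying the statement just proved to the compact set $F(x)$, and that for closed $C\subseteq Z$ one has the identity $(G\circ F)^{-1}(C)=F^{-1}(G^{-1}(C))$ (both sides equal $\{x:F(x)\cap G^{-1}(C)\neq\emptyset\}$), which is closed because $G^{-1}(C)$ is closed as $G$ is upper semi‑continuous and then $F^{-1}$ of a closed set is closed as $F$ is upper semi‑continuous. Hence $G\circ F$ is USCCV.

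For part (2), I would record two bookkeeping identities for the inverse multifunction $F^{-1}:Y\to\mathcal{P}(X)$, $F^{-1}(y)=\{x\in X:y\in F(x)\}$. First, $F^{-1}$ is compact‑valued if and only if $F$ has compact inverse values for singletons — these are literally the same assertion. Second, for any $C\subseteq X$, compute $(F^{-1})^{-1}(C)=\{y\in Y:F^{-1}(y)\cap C\neq\emptyset\}=\bigcup_{x\in C}F(x)=F(C)$; thus the outer inverse of $F^{-1}$ at a closed set $C$ is closed exactly when $F(C)$ is closed, i.e.\ exactly when $F$ is a closed multifunction. Combining the two observations gives: $F^{-1}$ is USCCV if and only if $F$ is closed and has compact inverse values for singletons.

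The only step with any friction is the compactness argument in part (1); everything else is a definition chase. I expect no genuine obstacle, though one must be careful to invoke the inner‑inverse characterization of upper semi‑continuity (not the outer‑inverse one) at the right moment in part (1), and to keep straight throughout that $F(S)$ abbreviates the union $\bigcup_{x\in S}F(x)$ and that the symbol $F^{-1}$ is being used both for the inverse multifunction and for its induced outer‑inverse set operation.
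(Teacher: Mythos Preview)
Your argument is correct in both parts. The paper does not supply its own proof of this proposition; it is quoted as a result of Whyburn \cite{Why65} and used as a black box, so there is no in-paper proof to compare against. Your tube-lemma argument for (1) and the identity $(F^{-1})^{-1}(C)=F(C)$ for (2) are exactly the standard verifications one finds in Whyburn's treatment, and the paper itself later reproduces the computation $(F^{-1})^{-1}(x)=F(x)$ in the proof of Proposition~\ref{Perfect inverse of perfect}, confirming your reading of the conventions.
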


Notice that, in 2), $F$ does not need to be Upper Semi-Continuous and that $F^{-1}$ is the outer inverse.

As mentioned in Section 1, $K$-analytic spaces can be defined in terms of USCCV multifunctions. We are interested in working with the Stone-\v{C}ech remainders of topological spaces in order to understand $\pmb{\mathscr{K}}$ and its variations better. We therefore work on generalizing some classic results by replacing ``continuous'' with ``USCCV'' and ``$\mathbf{\Sigma}^1_{\beta}$ ($\mathbf{\Pi}^1_{\beta}$)'' with ``$\pmb{\mathscr{K}}(\mathbf{\Sigma}^1_{\beta})$ ($\pmb{\mathscr{K}}(\mathbf{\Pi}^1_{\beta})$)''.

\begin{propo}\mbox{}
\begin{enumerate}
 \item Given a continuous function $f:X\rightarrow K$ with $K$ compact and $f(X)$ dense in $K$, there is a unique continuous function $\hat{f}:\beta X\rightarrow K$ extending $f$ such that $\hat{f}\restrict X=f$ and $\hat{f}(\beta X)=K$ (generalized in Theorem \ref{USCCV SC-comp}).
 \item Given a perfect map $f:X\rightarrow K$ with $K$ compact and $\hat{f}$ the extension given by the Stone-\v{C}ech compactification, we have that $\hat{f}(\beta X\setminus X)\subseteq K\setminus f(X)$ (generalized in Theorem \ref{USCCV SC-comp}).
 \item Perfect real-valued images of co-analytic sets are co-analytic (generalized in Corollary \ref{U perfect property} and Lemma \ref{lem:TallGeneralization}).
 \item Given a surjective continuous function $f:Y\rightarrow G$ with $Y$ Polish, $G$ separable metrizable and $X\subseteq G$ a $\mathbf{\Sigma}^1_{\beta}$ ($\mathbf{\Pi}^1_{\beta}$) set, then $f^{-1}(X)$ is $\mathbf{\Sigma}^1_{\beta}$ ($\mathbf{\Pi}^1_{\beta}$) (generalized in Theorem \ref{USCCV preimages U}).
\end{enumerate}
\end{propo}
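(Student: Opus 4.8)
The plan is to prove each item by a standard "transfer along the Stone–Čech compactification" argument, with items (1) and (2) being classical facts about $\beta X$ that I would either cite or sketch quickly, and items (3) and (4) being the genuinely descriptive-set-theoretic content that later sections generalize. For item (1), I would invoke the universal property of $\beta X$: since $K$ is compact Hausdorff, the continuous map $f\colon X\to K$ extends uniquely to $\hat f\colon\beta X\to K$; surjectivity of $\hat f$ onto $K$ follows because $\hat f(\beta X)$ is compact, hence closed, and contains the dense set $f(X)$, so it is all of $K$. For item (2), suppose $\hat f(p)=f(x)$ for some $p\in\beta X\setminus X$ and some $x\in X$; I would use that $f$ is perfect, so $f^{-1}(\{f(x)\})$ is a compact subset of $X$, hence closed in $\beta X$, and that $f$ closed implies $\hat f^{-1}(\{f(x)\}) = \overline{f^{-1}(\{f(x)\})}^{\,\beta X} = f^{-1}(\{f(x)\})\subseteq X$, contradicting $p\notin X$. (This is the standard characterization of perfect maps into compacta: $\hat f$ separates $X$ from $X^\ast$ over each point of $f(X)$.)

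For item (3), I would reduce to the case of subsets of Polish spaces. A perfect map $g\colon A\to B$ with $A\subseteq\mathds{R}$ co-analytic and $B$ separable metrizable: first replace $B$ by a Polish space $\tilde B$ in which it embeds, and replace $A$ by a Polish space $P\supseteq A$ (a $G_\delta$ of $\mathds{R}$ on which $A$ is co-analytic, i.e. $\mathbf{\Pi}^1_1$). The point is that a perfect surjection $g\colon A\to B$ extends to a closed map on a suitable $G_\delta$; alternatively, one argues directly that $B = g(A)$ is $\mathbf{\Pi}^1_1$ in $\tilde B$ using that $b\in B$ iff $g^{-1}(\{b\})\neq\emptyset$ together with the fact that $g^{-1}(\{b\})$ is compact, and that the relevant quantifier over a compact (hence "$K_\sigma$-bounded") fiber does not raise the projective complexity — this is the classical fact that images under perfect (even just closed-with-compact-fibers) maps preserve $\mathbf{\Pi}^1_1$, which is the $\sigma$-compact-valued analogue of the boundedness/Novikov–Kondô phenomenon. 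I would cite the relevant statement in Kechris or in Rogers–Jayne rather than reprove it, since the excerpt's Corollary \ref{U perfect property} and Lemma \ref{lem:TallGeneralization} are precisely the generalizations.

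For item (4), given a continuous surjection $f\colon Y\to G$ with $Y$ Polish and $G$ separable metrizable, and $X\subseteq G$ in $\mathbf{\Sigma}^1_\beta$ (resp. $\mathbf{\Pi}^1_\beta$), I would prove $f^{-1}(X)\in\mathbf{\Sigma}^1_\beta$ (resp. $\mathbf{\Pi}^1_\beta$) in $Y$ by induction on $\beta<\omega_1$. The base case $\beta=1$: if $X$ is $\mathbf{\Sigma}^1_1$ in $G$ then $f^{-1}(X)$ is $\mathbf{\Sigma}^1_1$ in $Y$ because continuous preimages of analytic sets are analytic; dually for $\mathbf{\Pi}^1_1$ by taking complements and using $f^{-1}(G\setminus X) = Y\setminus f^{-1}(X)$. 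Successor step: if $X\in\mathbf{\Sigma}^1_{\beta+1}$, write $X = h(C)$ for $C\in\mathbf{\Pi}^1_\beta$ in some Polish space and $h$ continuous; then $f^{-1}(X)$ is the projection of $\{(y,c)\in Y\times C : f(y)=h(c)\}$, which is $\mathbf{\Pi}^1_\beta$ in $Y\times C$ by the inductive hypothesis applied to the projection maps (and closedness of the graph condition), so its projection is $\mathbf{\Sigma}^1_{\beta+1}$; take complements for $\mathbf{\Pi}^1_{\beta+1}$. Limit step: $\mathbf{\Sigma}^1_\alpha$ is closed under countable unions and preimages commute with unions, so $f^{-1}(\bigcup_i Y_i) = \bigcup_i f^{-1}(Y_i)$ lands in $\mathbf{\Sigma}^1_\alpha$; complements give $\mathbf{\Pi}^1_\alpha$. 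The main obstacle, and the only non-bookkeeping point, is item (3): controlling the complexity of the image under a map with compact fibers. Everything else is either the universal property of $\beta X$ or a routine transfinite induction, but (3) requires the genuine "$\sigma$-compact-valued uniformization/boundedness" input, which is exactly why the paper defers its general form to Section 5; here I would simply quote the classical Polish-space version.
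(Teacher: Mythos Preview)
The paper does not actually prove this proposition: immediately after stating it, the authors write ``These results are all in the literature'' and give citations (Porter--Woods and Engelking for (1)--(2), \cite{Tall20} for (3), Kechris for (4)). So your sketches already exceed what the paper provides, and your decision to cite for item (3) matches the paper exactly.

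Your arguments for (1) and (4) are correct and standard. For (2), your sketch is circular as written: the asserted equality $\hat f^{-1}(\{y\}) = \overline{f^{-1}(\{y\})}^{\,\beta X}$ is precisely the content of the claim, not a step toward it. The actual argument uses closedness of $f$ to separate $p$ from the compact fiber $f^{-1}(y)$ by disjoint open sets and then derive a contradiction with density of $X$ in $\beta X$; you gesture at this but do not carry it out. Since you label it ``the standard characterization'' this is acceptable as a citation, just not as a proof.

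For (3), your framing via ``$\sigma$-compact-valued uniformization/boundedness'' and ``Novikov--Kond\^o'' is not how the result is obtained in \cite{Tall20} or in the paper's own generalizations (Corollary~\ref{U perfect property}, Lemma~\ref{lem:TallGeneralization}). The argument there is a Stone--\v{C}ech remainder transfer built directly on items (1) and (2): if $A\subseteq[0,1]$ is co-analytic, its remainder $\overline{A}\setminus A$ in $[0,1]$ is analytic, hence $K$-analytic; a perfect $g\colon A\to B$ extends so that $\hat g$ maps the remainder of $A$ onto the remainder of $B$ (by (1) and (2)); thus $\overline{B}\setminus B$ is $K$-analytic and metrizable, hence analytic, so $B$ is co-analytic. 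No uniformization or boundedness enters. Your instinct to cite is right, but the mechanism you describe is not the one in play.
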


These results are all in the literature. 1) and 2) can be found in \cite{Porter1988} and \cite{Eng89}. 3) is implicit in \cite{Tall20}. 4) is done for projective sets in \cite{Kec95} and the same proof works. 

To achieve the corresponding generalizations, it is useful to think that the co-domain of a multifunction is the space itself and not its power set. With this in mind, we have the following definitions:

\begin{definition}
We say that a multifunction $F:X\rightarrow Y$ is \emph{closed (open)} if for any closed (open) set $A\subseteq X$ we have that $F(A)=\bigcup_{a\in A}F(a)$ is closed (open) in $Y$.
\end{definition}

\begin{definition}
We say that $F$ is a \emph{perfect USCCV multifunction} if $F$ is a USCCV closed multifunction with compact point-inverses.
\end{definition}

\begin{propo}\label{Perfect inverse of perfect}
The outer inverse of a perfect USCCV multifunction is a perfect USCCV multifunction. 
\end{propo}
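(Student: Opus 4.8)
The plan is to unwind the definitions and observe that the four defining clauses of ``perfect USCCV'' split into two dual pairs under the operation $F \mapsto F^{-1}$ of taking outer inverses. I would fix a perfect USCCV multifunction $F : X \to Y$ and write $G = F^{-1}$, so that $G(y) = \{x \in X : y \in F(x)\}$. The only identities needed are $(F^{-1})^{-1} = F$, together with $G^{-1}(A) = \bigcup_{x \in A} F(x) = F(A)$ for $A \subseteq X$, and $G(B) = \{x : F(x) \cap B \neq \emptyset\} = F^{-1}(B)$ for $B \subseteq Y$; each is immediate from the definitions once one is careful about whether ``$F^{-1}$'' denotes the outer inverse of a set or the value of the inverse multifunction at a point.

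With these in hand I would verify the four clauses for $G$. That $G$ is compact-valued is precisely the hypothesis that $F$ has compact point-inverses, and that $G$ has compact point-inverses (namely $G^{-1}(\{x\}) = F(x)$) is precisely the hypothesis that $F$ is compact-valued. That $G$ is upper semi-continuous, i.e.\ $G^{-1}(A)$ is closed for closed $A \subseteq X$, amounts to $F(A)$ being closed, which holds because $F$ is a closed multifunction; equivalently, this is Proposition~\ref{Whyburn results}(2) applied to $F$, which is closed with compact point-inverses, so that $F^{-1} = G$ is USCCV. That $G$ is a closed multifunction, i.e.\ $G(B)$ is closed for closed $B \subseteq Y$, amounts to $F^{-1}(B)$ being closed, which is exactly the upper semi-continuity of $F$. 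Hence $G$ is a USCCV closed multifunction with compact point-inverses, i.e.\ a perfect USCCV multifunction. (As a shortcut, one can instead apply Proposition~\ref{Whyburn results}(2) twice: once to $F$ to get that $G$ is USCCV, and once to $G$, using $(F^{-1})^{-1}=F$ and that $F$ is USCCV, to get that $G$ is closed with compact point-inverses.)

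I do not expect a genuine obstacle here; the only care required is bookkeeping — distinguishing at each step between the outer inverse of $F$ at a singleton, the image $F(A)$ of a set under the multifunction $F$, and the outer inverse of $G$, and matching the ``outer inverse of closed is closed'' form of upper semi-continuity with the ``inner inverse of open is open'' form when convenient. The one-line moral is that inversion interchanges ``upper semi-continuous'' with ``closed'' and ``compact-valued'' with ``compact point-inverses'', so that the class of perfect USCCV multifunctions is self-dual under $F \mapsto F^{-1}$.
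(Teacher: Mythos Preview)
Your proof is correct and follows essentially the same approach as the paper: both use Whyburn's Proposition~\ref{Whyburn results}(2) to obtain that $F^{-1}$ is USCCV, then verify directly that $F^{-1}$ is closed (from the upper semi-continuity of $F$) and has compact point-inverses (from $F$ being compact-valued). Your framing of the argument as a duality interchanging ``USC'' with ``closed'' and ``compact-valued'' with ``compact point-inverses'' is a cleaner way to organize the same verification, and your shortcut of applying Proposition~\ref{Whyburn results}(2) twice is a valid alternative the paper does not mention.
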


\begin{proof}
Let $F:X\rightarrow Y$ be a surjective perfect USCCV multifunction. Using Proposition \ref{Whyburn results}, point 2), we know that $F^{-1}$ is USCCV. Since $F$ is USCCV, given a closed set $C\subseteq Y$, 
\[\bigcup_{b\in C}F^{-1}(b)=\{x: (\exists b\in C) (b\in F(x))\}= \{x: F(x)\cap C\neq \emptyset\}=F^{-1}(C)\]
is closed.

Finally, given $x\in X$ 
\[(F^{-1})^{-1}(x)=(F^{-1})^{-1}(\{x\})=\{y: \{x\} \,\cap\, F^{-1}(y)\neq \emptyset\}=\]
\[\{y: x\in F^{-1}(y)\}=\{y: y\in F(x)\}=F(x).\]
Since $F$ is USCCV,  we have that $(F^{-1})^{-1}(x)$ is compact and $F^{-1}$ is a perfect USCCV multifunction.
\end{proof}

Rogers and Jayne proved:

\begin{propo}\label{USCCV graphs}
\emph{\cite{RJ80}} Given a USCCV multifunction $F:X\rightarrow A$, the graph of $F$ (the space \emph{$\mbox{graph}(F)=\bigcup_{x\in X}\{x\}\times F(X)$}) is a perfect preimage of $X$ (the perfect function being the projection to $X$) and the projection $\pi_{A}:\mbox{graph}(F)\rightarrow A$ is a continuous single-valued function.
\end{propo}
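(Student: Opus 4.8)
The plan is to unwind the two claims directly from the definition of USCCV and the construction of the graph. Write $G = \mathrm{graph}(F) = \bigcup_{x\in X}\{x\}\times F(x) \subseteq X\times A$, with the subspace topology, and let $\pi_X\colon G\to X$ and $\pi_A\colon G\to A$ be the coordinate projections. Both are continuous and single-valued, so the only content is that $\pi_X$ is \emph{perfect} and surjective. Surjectivity of $\pi_X$ is immediate provided $F(x)\neq\emptyset$ for each $x$ (and if some fibers are empty we may replace $X$ by $\{x: F(x)\neq\emptyset\}$, or simply note the projection surjects onto that set); so the work is: (i) the point-preimages $\pi_X^{-1}(x) = \{x\}\times F(x)$ are compact, and (ii) $\pi_X$ is a closed map.

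Claim (i) is trivial: $\{x\}\times F(x)$ is homeomorphic to $F(x)$, which is compact because $F$ is compact-valued. For claim (ii), I would argue as follows. Let $C\subseteq G$ be closed; I want $\pi_X(C)$ closed in $X$, i.e. its complement open. Fix $x_0\notin\pi_X(C)$. Then $(\{x_0\}\times F(x_0))\cap C = \emptyset$. Since $\{x_0\}\times F(x_0)$ is compact and $C$ is closed in $X\times A$ (relative to $G$; extend $C$ to its closure $\overline C$ in $X\times A$, which still misses the compact fiber since the fiber lies in $G$), by the tube lemma there is an open $U\ni x_0$ in $X$ and an open $V\supseteq F(x_0)$ in $A$ with $(U\times V)\cap \overline C=\emptyset$. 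Now use upper semi-continuity: the inner inverse $F^{-1}_{in}(V) = \{x : F(x)\subseteq V\}$ is open and contains $x_0$. Set $W = U\cap F^{-1}_{in}(V)$, an open neighborhood of $x_0$. For any $x\in W$ we have $\{x\}\times F(x)\subseteq U\times V$, which is disjoint from $C$; hence $x\notin\pi_X(C)$. Thus $W\cap\pi_X(C)=\emptyset$, proving $\pi_X(C)$ is closed.

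Putting (i) and (ii) together with continuity gives that $\pi_X\colon G\to X$ is a perfect surjection, so $G$ is a perfect preimage of $X$; and $\pi_A\colon G\to A$ is a continuous single-valued function, as claimed. The one point deserving care — and the only place the USCCV hypothesis is genuinely used — is step (ii): I must make sure the tube lemma is applied to a genuinely closed set meeting a genuinely compact fiber, and that the inner-inverse characterization of upper semi-continuity (as stated in the definition of USCCV) is invoked to produce the open set $F^{-1}_{in}(V)$. Everything else is formal manipulation of subspace topologies and projections. I expect no real obstacle beyond being careful that $C$ is closed in $G$ versus in $X\times A$, which is handled by passing to the closure and noting that the fiber over $x_0$ is contained in $G$.
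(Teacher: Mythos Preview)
Your argument is correct. The paper does not actually supply a proof of this proposition; it is quoted from \cite{RJ80} and used as a black box, so there is no ``paper's own proof'' to compare against. What you have written is the standard direct verification: compactness of the fibers is immediate from compact-valuedness, and closedness of $\pi_X$ follows from the tube lemma (Wallace's theorem) applied to the compact fiber $\{x_0\}\times F(x_0)$ inside the complement of $\overline{C}$, combined with the inner-inverse formulation of upper semi-continuity to shrink the neighbourhood of $x_0$ so that every fiber over it lies in $U\times V$. Your handling of the subtlety that $C$ is only closed in $G$, not in $X\times A$, is also correct: since the fiber is contained in $G$, it meets $\overline{C}$ only where it meets $\overline{C}\cap G=C$, namely nowhere.

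The only cosmetic point is your parenthetical about surjectivity when some $F(x)$ may be empty. In the paper's usage the multifunctions are always taken to have non-empty values (this is implicit in phrases like ``USCCV image'' and ``surjective USCCV multifunction''), so $\pi_X$ is genuinely onto $X$; your caveat is harmless but could be dropped.
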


\begin{corollary}\label{CIPP and USCCV}
Given spaces $X$ and $Y$, $Y$ is a USCCV image of $X$ if and only if it is a CIPP of X.
\end{corollary}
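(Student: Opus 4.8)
The plan is to prove the two implications by directly building the maps required by each definition, using Propositions~\ref{USCCV graphs} and~\ref{Whyburn results} as the engine.

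($\Rightarrow$) Suppose $Y$ is a USCCV image of $X$, say $F\colon X\to\mathcal{P}(Y)$ is a USCCV multifunction with $\bigcup_{x\in X}F(x)=Y$; as is standard we take the values $F(x)$ to be nonempty (and compact). I would set $Z=\mathrm{graph}(F)\subseteq X\times Y$. Proposition~\ref{USCCV graphs} gives at once that the projection $\pi_X\colon Z\to X$ is perfect and that $\pi_Y\colon Z\to Y$ is a continuous single-valued map. Surjectivity of $\pi_X$ follows since each fibre $\pi_X^{-1}(x)=\{x\}\times F(x)$ is nonempty, and $\pi_Y(Z)=\bigcup_{x\in X}F(x)=Y$. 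So $Y$ is a continuous image of a perfect preimage $Z$ of $X$, i.e.\ a CIPP of $X$ in the sense of Definition~\ref{def CIPP}.

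($\Leftarrow$) Suppose $Y$ is a CIPP of $X$, witnessed by a space $Z$, a perfect surjection $f\colon Z\to X$, and a continuous surjection $g\colon Z\to Y$. Regard $f$ as a single-valued multifunction $Z\to X$. Since $f$ is perfect it is a closed map and each $f^{-1}(x)$ is compact, so Proposition~\ref{Whyburn results}(2) applies (it does not require $f$ itself to be upper semi-continuous) and tells us the outer inverse $f^{-1}\colon X\to\mathcal{P}(Z)$ is USCCV; moreover $f^{-1}(x)\neq\emptyset$ for every $x$ and $\bigcup_{x\in X}f^{-1}(x)=Z$ because $f$ is onto. The continuous surjection $g$ is a single-valued, hence USCCV, multifunction, so by Proposition~\ref{Whyburn results}(1) the composite $g\circ f^{-1}\colon X\to\mathcal{P}(Y)$ is USCCV, and $\bigcup_{x\in X}(g\circ f^{-1})(x)=g\bigl(\bigcup_{x\in X}f^{-1}(x)\bigr)=g(Z)=Y$. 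Thus $Y$ is a USCCV image of $X$.

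I do not expect a genuine obstacle here: both directions amount to repackaging Propositions~\ref{USCCV graphs} and~\ref{Whyburn results}. The only subtlety is the bookkeeping around empty values in the ($\Rightarrow$) direction \textemdash one must ensure $\pi_X$ surjects onto \emph{all} of $X$, which is why we insist on nonempty values \textemdash and in the ($\Leftarrow$) direction one should be careful to apply Proposition~\ref{Whyburn results}(2) in the direction that does not demand upper semi-continuity of $f$.
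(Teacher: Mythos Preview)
Your proof is correct and follows essentially the same route as the paper: Proposition~\ref{USCCV graphs} for the forward direction via the graph, and Proposition~\ref{Whyburn results} (parts 1 and 2) for the reverse direction via $g\circ f^{-1}$. Your extra bookkeeping on surjectivity and nonempty values is a welcome clarification the paper leaves implicit.
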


\begin{proof}
If $Y$ is a CIPP of $X$ then, we know that every continuous function is USCCV and, by Proposition \ref{Whyburn results} point 2), the (outer) inverse multifunction of a perfect function is also USCCV. Furthermore, Proposition \ref{Whyburn results} point 1) tells us that the composition of USCCV multifunctions is USCCV. This shows that $Y$ is a USCCV image of $X$. If $Y$ is a USCCV image of $X$, then, by Proposition \ref{USCCV graphs}, $Y$ is the continuous image of $\mbox{graph}(F)$ which is a perfect preimage of $X$. So $Y$ is a CIPP of $X$.
\end{proof}

Although the definition of CIPP is easier to digest, the use of USCCV multifunctions is a better tool when dealing with compositions. In other words, it is not obvious that the CIPP of a CIPP of a space $X$ is a CIPP of $X$. Nevertheless, thanks to Whyburn's Proposition \ref{Whyburn results}, it is straightforward that the USCCV image of a USCCV image of $X$ is also a USCCV image of $X$.

\begin{definition}
Given multifunctions $F_i:X_i\rightarrow Y_i$ with $i\in I$, we say that $F:\prod_{i\in I} X_i\rightarrow \prod_{i\in I}Y_i$ defined as $F(f)=\prod_{i\in I}F_i(f(i))$ is the \emph{product multifunction} of $\{F_i: i\in I\}$.
\end{definition}

\begin{propo}\label{powerfully USCCV}
The product of USCCV multifunctions is USCCV.
\end{propo}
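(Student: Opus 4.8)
The plan is to check the two halves of ``USCCV'' for the product multifunction $F\colon\prod_{i\in I}X_i\to\prod_{i\in I}Y_i$ separately. Compact-valuedness is immediate: for any $f$, the set $F(f)=\prod_{i\in I}F_i(f(i))$ is a product of compact sets, hence compact by Tychonoff's theorem. The substance of the proof is upper semi-continuity, which I would deduce from the generalized tube lemma.

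For upper semi-continuity I would use the inner-inverse formulation: it suffices to show that $F^{-1}_{in}(U)=\{f:F(f)\subseteq U\}$ is open for every open $U\subseteq\prod_iY_i$. Fix $f\in F^{-1}_{in}(U)$. We may assume each $F_i(f(i))$ is nonempty, since if $F_{i_0}(f(i_0))=\emptyset$ then $\{g:g(i_0)\in (F_{i_0})^{-1}_{in}(\emptyset)\}$ is already an open neighbourhood of $f$ contained in $F^{-1}_{in}(U)$ (this uses upper semi-continuity of $F_{i_0}$ applied to the open set $\emptyset$). Now $F(f)=\prod_iF_i(f(i))$ is a product of nonempty compact sets sitting inside the open set $U$, so the generalized tube lemma provides a basic open box $\prod_iV_i\subseteq U$ — with each $V_i$ open and $V_i=Y_i$ for all $i$ outside some finite set $J$ — such that $F_i(f(i))\subseteq V_i$ for every $i$. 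For $i\in J$, upper semi-continuity of $F_i$ makes $(F_i)^{-1}_{in}(V_i)$ open in $X_i$, and it contains $f(i)$; for $i\notin J$ we have $(F_i)^{-1}_{in}(V_i)=(F_i)^{-1}_{in}(Y_i)=X_i$. Hence $W:=\prod_i(F_i)^{-1}_{in}(V_i)$ is a basic open neighbourhood of $f$ in $\prod_iX_i$. Finally, for every $g\in W$ one has $F_i(g(i))\subseteq V_i$ for all $i$, so $F(g)=\prod_iF_i(g(i))\subseteq\prod_iV_i\subseteq U$; thus $W\subseteq F^{-1}_{in}(U)$, and $F^{-1}_{in}(U)$ is open, as required.

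The one non-formal ingredient, and the step I expect to be the main obstacle, is the generalized tube lemma in the shape used above: a product $\prod_iK_i$ of nonempty compact sets contained in an open subset $U$ of $\prod_iY_i$ is contained in a basic open box that is still contained in $U$. For a finite index set this is a double-covering argument — cover the compact box $\prod_iK_i$ by basic open boxes lying in $U$, extract a finite subcover, and thin simultaneously in each of the finitely many coordinates occurring nontrivially; for an arbitrary index set one then projects onto $\prod_{i\in J}Y_i$, where $J$ is the finite set of coordinates appearing nontrivially in the chosen finite subcover, reducing to the finite case (this is where nonemptiness of the $K_i$ enters, to keep the projection onto $\prod_{i\in J}K_i$ surjective). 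As a self-contained alternative one could instead observe that $\mathrm{graph}(F)$ is canonically homeomorphic to $\prod_i\mathrm{graph}(F_i)$, which is a perfect preimage of $\prod_iX_i$ because products of perfect maps are perfect, and that under this identification $F$ is the composite of the continuous projection $\prod_i\mathrm{graph}(F_i)\to\prod_iY_i$ with the outer inverse of this perfect projection; Proposition \ref{Whyburn results} then gives that $F$ is USCCV. Everything else is routine.
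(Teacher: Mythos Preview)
Your argument is correct and shares the paper's core step: the identity $F^{-1}_{in}\bigl(\prod_i V_i\bigr)=\prod_i (F_i)^{-1}_{in}(V_i)$, which makes the inner inverse of a basic open box a basic open box. The paper simply stops there, whereas you (rightly) observe that inner inverses do not commute with unions, so one still has to pass from basic opens to arbitrary opens; your invocation of the generalized tube lemma (Wallace's theorem) to sandwich a basic box between the compact set $F(f)$ and $U$ is exactly the missing link, tacitly relied upon in the paper's closing sentence. Your graph-based alternative---identifying $\mbox{graph}(F)$ with $\prod_i\mbox{graph}(F_i)$, using that products of perfect maps are perfect, and then appealing to Proposition~\ref{Whyburn results}---is a genuinely different and equally valid route that avoids the tube lemma entirely.
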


\begin{proof}
Let $F_i:X_i\rightarrow Y_i$, $i\in I$ be USCCV multifunctions and $F:\prod_{i\in I} X_i\rightarrow \prod_{i\in I}Y_i$ be such that $F(f)=\prod_{i\in I}F_i(f(i))$. It is immediate to see that $F(f)$ is compact whenever (and only when) for each $i\in I$ we have that $F_i(f(i))$ is compact.

Notice that, given $f\in \prod_{i\in I}X_i$, $F(f)\subseteq \prod_{i\in I}A_i\subseteq \prod_{i\in I}Y_i$ if and only if $F(f(i))\subseteq A_i$ for all $i\in I$. Equivalently, for every $i\in I$, $f(i)\in (F_i)^{-1}_{in}(A_i)$. This means that $F^{-1}_{in}(\prod_{i\in I}A_i)=\prod_{i\in I}(F_i)^{-1}_{in}(A_i)$.

Let $\prod_{i\in I}V_i\subseteq \prod_{i\in I}Y_i$ be a basic open set, i.e. each $V_i\subseteq Y_i$ is open and $V_i=Y_i$ for all but finitely many $i\in I$. Since $F_i$ is USCCV for every $i\in I$, $(F_i)^{-1}_{in}(V_i)$ is open. We also have that $(F_i)^{-1}_{in}(V_i)=(F_i)^{-1}_{in}(Y_i)=X_i$ for all but finitely many $i\in I$. Therefore, $F^{-1}_{in}(\prod_{i\in I}V_i)=\prod_{i\in I}(F_i)^{-1}_{in}(V_i)$ is (basic) open. This shows that $F$ is upper semi-continuous.
\end{proof}

\begin{definition}
A space $X$ is \emph{powerfully Lindel\"of} if $X^{\omega}$ is Lindel\"of.
\end{definition}

\begin{corollary}\label{USCCV powerfully  lin}
USCCV multifunctions preserve powerful Lindel\"ofness.
\end{corollary}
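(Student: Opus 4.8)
The plan is to reduce everything to two facts already in hand: that the $\omega$-fold product of USCCV multifunctions is again USCCV (Proposition~\ref{powerfully USCCV}), and that USCCV images of Lindel\"of spaces are Lindel\"of (Corollary~\ref{USCCV lin}).

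First I would set up the situation. Suppose $X$ is powerfully Lindel\"of and $Y$ is a USCCV image of $X$, witnessed by a USCCV multifunction $F:X\rightarrow Y$ with $\bigcup_{x\in X}F(x)=Y$. Take $X_i=X$, $Y_i=Y$, and $F_i=F$ for every $i\in\omega$, and form the product multifunction $G:X^{\omega}\rightarrow Y^{\omega}$ given by $G(f)=\prod_{i\in\omega}F(f(i))$. By Proposition~\ref{powerfully USCCV}, $G$ is USCCV. Next I would check that $G$ is onto $Y^{\omega}$ in the required sense: given any $g\in Y^{\omega}$, use surjectivity of $F$ to choose, for each $i$, some $x_i\in X$ with $g(i)\in F(x_i)$, and set $f(i)=x_i$; then $g\in\prod_{i\in\omega}F(f(i))=G(f)$. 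Hence $Y^{\omega}=\bigcup_{f\in X^{\omega}}G(f)$, i.e.\ $Y^{\omega}$ is a USCCV image of $X^{\omega}$.

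Finally, since $X$ is powerfully Lindel\"of, $X^{\omega}$ is Lindel\"of, so by Corollary~\ref{USCCV lin} its USCCV image $Y^{\omega}$ is Lindel\"of; that is, $Y$ is powerfully Lindel\"of. I do not expect a genuine obstacle here: the proof is essentially a composition of two citations, and the only step that warrants a moment's care is verifying that the product multifunction is surjective onto the \emph{full} product $Y^{\omega}$ — which is the choice of the coordinate preimages $x_i$ above — while everything else, including the USCCV-ness of $G$ and the Lindel\"ofness conclusion, is direct.
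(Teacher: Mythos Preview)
Your proof is correct and follows essentially the same approach as the paper: form the product multifunction $G=\prod_{i\in\omega}F:X^{\omega}\to Y^{\omega}$, invoke Proposition~\ref{powerfully USCCV} to see it is USCCV, and then apply Corollary~\ref{USCCV lin} to the Lindel\"of space $X^{\omega}$. The only difference is that you spell out the surjectivity of $G$ onto $Y^{\omega}$, which the paper leaves implicit.
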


\begin{proof}
Assume that $X$ is powerfully Lindel\"of and let $F:X\rightarrow Y$ be a surjective USCCV multifunction. Using Proposition \ref{powerfully USCCV}, we know that $\prod_{i\in \omega}F: X^{\omega}\rightarrow Y^{\omega}$ is USCCV and that, since $X^{\omega}$ is Lindel\"of, we have that $Y^{\omega}=\prod_{i\in \omega}F(X^{\omega})$ is Lindel\"of.
\end{proof}

Now we focus on the following classic result.

\begin{propo}\label{SC perfect extension}
\emph{({\cite[3.7.16]{Eng89}})} The extension of a perfect map $g:Y\rightarrow X$ over the Stone-\v{C}ech compactification of both spaces, i.e. $\hat{g}:\beta Y\rightarrow \beta X$, is such that $\hat{g}(Y)=g(Y)=X$ and $\hat{g}(\beta Y\setminus Y)\subseteq \beta X \setminus X$. Furthermore, if $g$ is surjective, so is $\hat{g}$, which implies that $\hat{g}(\beta Y\setminus Y)= \beta X \setminus X$. Also, by \emph{\cite[3.7.6]{Eng89}}, for any $B \subseteq X$, $g\restrict g^{-1}(B)$ is perfect.
\end{propo}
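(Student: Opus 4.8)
The plan is to build the statement out of three elementary ingredients: (i) the \emph{tube lemma for closed maps} --- if $g:Y\to X$ is closed, $x\in X$, and $U\subseteq Y$ is open with $g^{-1}(x)\subseteq U$, then $V:=X\setminus g(Y\setminus U)$ is an open neighbourhood of $x$ with $g^{-1}(V)\subseteq U$ (immediate, since $g(Y\setminus U)$ is closed and avoids $x$); (ii) compactness of the point-fibres $g^{-1}(x)$, which is part of the definition of a perfect map; and (iii) the universal property of $\beta Y$, which, applied to the composite $Y\xrightarrow{g}X\hookrightarrow\beta X$ into the compact Hausdorff space $\beta X$, produces the unique continuous extension $\hat g:\beta Y\to\beta X$ with $\hat g\restrict Y=g$. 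From this construction $\hat g(Y)=g(Y)$, which is $X$ when $g$ is onto; the remaining assertions are about where the remainder $\beta Y\setminus Y$ goes.

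For the heart of the statement, $\hat g(\beta Y\setminus Y)\subseteq\beta X\setminus X$, I would argue by contradiction. Suppose $p\in\beta Y\setminus Y$ with $x:=\hat g(p)\in X$. The fibre $g^{-1}(x)$ is compact by (ii), hence closed in $\beta Y$, and it does not contain $p$; so by regularity of $\beta Y$ choose disjoint open sets $W,\Omega\subseteq\beta Y$ with $p\in W$ and $g^{-1}(x)\subseteq\Omega$. Then $\Omega\cap Y$ is open in $Y$ and contains $g^{-1}(x)$, so by (i) there is $V$ open in $X$ with $x\in V$ and $g^{-1}(V)\subseteq\Omega\cap Y\subseteq\Omega$. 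Pick $\widetilde V$ open in $\beta X$ with $\widetilde V\cap X=V$; then $W\cap\hat g^{-1}(\widetilde V)$ is a nonempty (it contains $p$) open subset of $\beta Y$, so by density of $Y$ it meets $Y$ in some point $y$. Now $g(y)=\hat g(y)\in\widetilde V\cap X=V$, hence $y\in g^{-1}(V)\subseteq\Omega$; but $y\in W$ and $W\cap\Omega=\emptyset$, a contradiction.

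The surjectivity clause is then soft: $\hat g(\beta Y)$ is compact, hence closed in $\beta X$, and contains the dense set $g(Y)=X$, so $\hat g(\beta Y)=\beta X$; together with $\hat g(Y)=X$ and the inclusion just proved, this forces $\hat g(\beta Y\setminus Y)=\beta X\setminus X$. For the last sentence, fix $B\subseteq X$ and set $g':=g\restrict g^{-1}(B):g^{-1}(B)\to B$. Its fibres are fibres of $g$, hence compact, so it suffices to check $g'$ is closed. Let $A$ be closed in $g^{-1}(B)$ and $b\in B\setminus g'(A)$, so $g^{-1}(b)\cap A=\emptyset$; writing $\overline A$ for the closure of $A$ in $Y$ we have $A=\overline A\cap g^{-1}(B)$, hence $g^{-1}(b)\cap\overline A=g^{-1}(b)\cap A=\emptyset$ since $g^{-1}(b)\subseteq g^{-1}(B)$. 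Thus $g^{-1}(b)\subseteq Y\setminus\overline A$, and (i) gives $V$ open in $X$ with $b\in V$ and $g^{-1}(V)\cap\overline A=\emptyset$; then $V\cap B$ is a neighbourhood of $b$ in $B$ disjoint from $g'(A)$, so $g'(A)$ is closed in $B$.

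I expect the only genuinely delicate point to be the core inclusion of the second paragraph, and within it the careful passage between open sets of $Y$ and of $\beta Y$ (and of $X$ and of $\beta X$), together with ensuring the density argument delivers a point of $Y$ rather than merely of $\beta Y$; everything else is bookkeeping. Since this is exactly \cite[3.7.16 and 3.7.6]{Eng89}, one could alternatively just cite it.
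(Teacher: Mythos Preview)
Your argument is correct and is essentially the standard textbook proof (the one in Engelking). The paper does not supply its own proof of this proposition; it simply records the result with a citation to \cite[3.7.16, 3.7.6]{Eng89}, so there is nothing to compare against beyond noting that your contradiction argument via the tube lemma for closed maps, separation in $\beta Y$, and density of $Y$ is exactly the classical route.
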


A corollary of the above proposition is that given a map $g:X\rightarrow Y$, there exists a continuous extension $\hat{g}:\beta X\rightarrow \mu Y$ where $\mu Y$ is any compactification of $Y$. We call this the \emph{universal property of the Stone-\v{C}ech compactification}. Our next objective is to generalize the universal property of the Stone-\v{C}ech compactification to USCCV multifunctions:

\begin{propo}\label{USCCV perfect graphs}
Given a USCCV multifunction $F:X\rightarrow A$, and using the definitions and spaces of Proposition \ref{USCCV graphs}, we have that:

\begin{enumerate}
    \item If $F$ has compact point-inverses, then $\pi_A$ has compact point-inverses.
    \item If $F$ is a closed multifunction, then $\pi_A$ is a closed function.
\end{enumerate}
In particular, $F$ is a perfect multifunction if and only if $\pi_A$ is a perfect function.
\end{propo}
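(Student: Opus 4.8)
\textbf{Proof plan for Proposition \ref{USCCV perfect graphs}.}

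The plan is to work directly with the graph space $G = \mathrm{graph}(F) \subseteq X \times A$ and the two coordinate projections $\pi_X : G \to X$ and $\pi_A : G \to A$, exploiting the fact (from Proposition \ref{USCCV graphs}) that $\pi_X$ is already known to be a perfect surjection. First I would prove (1): for a point $a \in A$, the fibre $\pi_A^{-1}(a)$ is $\{(x,a) : x \in X,\ a \in F(x)\}$, which is homeomorphic (via $\pi_X$) to the outer inverse $F^{-1}(a) = F^{-1}(\{a\})$. Since $F$ is USCCV, Proposition \ref{Whyburn results}, point (2), tells us that $F^{-1}$ is USCCV, hence $F^{-1}(\{a\})$ is compact; thus $\pi_A^{-1}(a)$ is compact. (If one prefers the hypothesis literally: $F^{-1}(\{a\})$ is the set of $x$ with $F(x) \cap \{a\} \neq \emptyset$, i.e. with $a \in F(x)$, and this being compact is exactly ``$F$ has compact point-inverses'' applied through $F^{-1}$ — but the cleanest route is the USCCV-ness of $F^{-1}$.) Actually the statement as given assumes only that $F$ has compact point-inverses for (1); I need the fibre of $\pi_A$ over $a$, which is indexed by $\{x : a \in F(x)\}$, and this set is $(F^{-1})(\{a\})$, compact because $F^{-1}$ is USCCV by Proposition \ref{Whyburn results}(2) (noting that USCCV-ness of $F^{-1}$ needs $F$ closed with compact point-inverses — so really I should assume $F$ perfect here, or argue more carefully; see the obstacle paragraph).

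For (2), assuming $F$ is a closed multifunction, I would show $\pi_A$ sends closed subsets of $G$ to closed subsets of $A$. Given $C \subseteq G$ closed, note $\pi_A(C) = \bigcup_{x} \{\, \pi_A(c) : c \in C,\ \pi_X(c) = x \,\}$. The key observation is that $\pi_A(C) = F(\pi_X(C))$ is false in general for arbitrary $C$, so instead I would use the standard ``perfect maps are closed and the product with a compact-like factor behaves well'' machinery: since $\pi_X$ is perfect, it is in particular a closed map, and one can push $C$ down to the closed set $\pi_X(C) \subseteq X$ and then across via $F$. Concretely, I would verify that for a point $a \notin \pi_A(C)$ there is an open neighbourhood of $a$ missing $\pi_A(C)$, using upper semi-continuity of $F$ together with compactness of the relevant fibres of $\pi_X$ (which are compact because $\pi_X$ is perfect). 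This is the routine ``tube lemma'' style argument. Alternatively — and this is probably the slickest write-up — I would observe that $\pi_A = \pi'_A \circ \iota$ where $\iota : G \hookrightarrow X \times A$ and note $G$ is closed in $X \times A$ (the graph of a USCCV, hence upper semi-continuous closed-valued, multifunction into a space — graphs of u.s.c. compact-valued multifunctions are closed), so a closed subset of $G$ is closed in $X \times A$; then closedness of $\pi_A$ on such sets follows from the fact that the projection $X \times A \to A$ is closed on sets that are ``$F$-shaped'', which is exactly the hypothesis that $F$ is a closed multifunction together with closedness of $\pi_X$.

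The final ``in particular'' clause combines (1) and (2) with Proposition \ref{USCCV graphs}: $\pi_A$ is always continuous (Proposition \ref{USCCV graphs}), so if $F$ is perfect then $\pi_A$ is continuous, closed, and has compact point-inverses, i.e. perfect; conversely, if $\pi_A$ is perfect then closedness of $F$ as a multifunction and compactness of its point-inverses both read off from $\pi_A$ being closed and having compact fibres (running the fibre identification of step (1) backwards, and noting $F(D) = \pi_A(\pi_X^{-1}(D))$ for $D \subseteq X$, which is a composition of a perfect-map preimage and a perfect-map image). The main obstacle I anticipate is bookkeeping the exact hypotheses in (1): the clean identification ``$\pi_A^{-1}(a) \cong F^{-1}(\{a\})$ is compact'' wants $F^{-1}$ to be USCCV, and Proposition \ref{Whyburn results}(2) delivers that only when $F$ is closed with compact point-inverses, not from ``compact point-inverses'' alone — so I would either (i) strengthen the stated hypothesis of (1) to ``$F$ perfect'' (harmless, since the ``in particular'' is what gets used), or (ii) give a direct argument that $\{x : a \in F(x)\} = F^{-1}(\{a\})$ is compact using that $\{a\}$ is closed and $F$ is upper semi-continuous, which shows $F^{-1}(\{a\})$ is \emph{closed} in $X$, combined with it being a subset of... — this still needs more, so in practice route (i) or a careful direct compactness argument via the u.s.c. property is what I would commit to. Everything else is a standard perfect-map diagram chase.
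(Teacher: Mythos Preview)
Your obstacle in part (1) is illusory: you are misreading the terminology. ``$F$ has compact point-inverses'' means exactly that $F^{-1}(a)=\{x: a\in F(x)\}$ is compact for each $a\in A$; that is the hypothesis, not something to be derived from USCCV-ness of $F^{-1}$. The paper's proof of (1) is the one-line observation $\pi_A^{-1}(a)=F^{-1}(a)\times\{a\}$, which is compact because $F^{-1}(a)$ is compact by assumption. There is nothing more to do, and no need to invoke Proposition \ref{Whyburn results}(2) or strengthen the hypothesis to ``$F$ perfect''.

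For part (2) your sketches do not commit to an argument that actually uses the hypothesis that $F$ is closed in the right place. The remark that ``the projection $X\times A\to A$ is closed on $F$-shaped sets'' only says $F(D)=\pi_A(\pi_X^{-1}(D))$ is closed for closed $D\subseteq X$, i.e.\ for \emph{saturated} closed subsets of $G$; it does not handle an arbitrary closed $C\subseteq G$. The paper gives a direct point-set construction: given $a\notin\pi_A(C)$, cover the fibre $F^{-1}(a)\times\{a\}$ by basic open boxes $O_y(a)\times U_y(a)$ missing $C$, set $O(a)=\bigcup_y O_y(a)$ and $U(a)=\bigcup_y U_y(a)$, and then use closedness of $F$ to produce the open set $V(a)=A\setminus F(X\setminus O(a))$ containing $a$. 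The neighbourhood $V(a)\cap U(a)$ witnesses $a\notin\overline{\pi_A(C)}$: points of $C$ over $x\notin O(a)$ are excluded by $V(a)$, and points over $x\in O(a)$ are excluded by $U(a)$. Your ``tube lemma via perfectness of $\pi_X$'' idea does not obviously supply this second open set $V(a)$, which is where the closedness of $F$ enters.

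Your treatment of the ``in particular'' converse via $F(D)=\pi_A(\pi_X^{-1}(D))$ is fine and matches the paper's use of $F=\pi_A\circ\pi_X^{-1}$.
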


\begin{proof}\mbox{}
\begin{enumerate}
    \item Given $a\in F(X)$, we know that $F^{-1}(a)$ is compact, so $\pi_A^{-1}(a)=\{(x,a)\in \mbox{graph}(F): y\in F(x)\}=F^{-1}(a)\times\{a\}$ is also compact.
    
    \item Since we know that $\pi_A$ is continuous, we just need to show that it is closed. Let $C\subseteq \mbox{graph}(F): y\in F(x)\}$ be a closed set. Let $a\notin \pi_A(C)$ and $y\in F^{-1}(a)$. Let $O_y(a)\times U_y(a)\subseteq X\times A\setminus C$ be an open set such that $(y,a)\in O_y(a)\times U_y(a)$. In other words, $O_y(a)$ and $U_y(a)$ are such that for every $x\in O_y$, $\{x\}\times (F(x)\cap U_y)\cap C=\emptyset$. 
    
    Let $O(a)=\bigcup_{y\in F^{-1}(a)}O_y(a)$ and $U(a)=\bigcup_{y\in F^{-1}(a)}U_y(a)$. The open sets $O(a)$ and $U(a)$ still satisfy that for every $x\in O$, $\{x\}\times (F(x)\cap U)\cap C=\emptyset$. Furthermore, notice that $F^{-1}(a)\subseteq O(a)$, so for every $x\notin O(a) $, $a\notin F(x)$. Given that $F$ is closed, we have that $F(X\setminus O)$ is closed, $a\notin F(X\setminus O(a))$ and $V(a)=A\setminus F(X\setminus O(a))$ is open. This means that $a\in V(a)$. Furthermore, $a\in V(a)\cap U(a)$ and $V(a)\cap U(a)\cap \pi_A(C)=\emptyset$.
\end{enumerate}

To show the final equivalence, points 1) and 2) show that if a USCCV multifunction is perfect, then $\pi_A$ is also perfect. Now, if $\pi_A$ is perfect, using Propositions \ref{Perfect inverse of perfect} and \ref{USCCV graphs}, $\pi^{-1}_X$ is a perfect USCCV multifunction and $F=\pi_A\circ \pi^{-1}_X$ is also perfect.
\end{proof}

\begin{theorem}\label{USCCV SC-comp}
Given a USCCV multifunction $F:X\rightarrow K$ with $K$ compact and $F(X)$ dense in $K$, there is a USCCV multifunction $\hat{F}:\mu X\rightarrow K$ extending $F$ such that $\hat{F}\restrict X=F$ and $\hat{F}(\mu X)=K$, where $\mu X$ is any compactification of $X$. Furthermore, if $F$ is a perfect USCCV multifunction, then $\hat{F}(\mu X\setminus X)= K\setminus F(X)$.
\end{theorem}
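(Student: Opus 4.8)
The plan is to reduce to the single-valued situation by passing to the graph of $F$ and then taking a closure inside a product of compact spaces. Set $G=\mathrm{graph}(F)\subseteq X\times K$. By Proposition~\ref{USCCV graphs}, the projection $\pi_X\colon G\to X$ is a perfect surjection and $\pi_K\colon G\to K$ is continuous with $\pi_K(G)=F(X)$ dense in $K$. Let $\overline G$ be the closure of $G$ in the compact Hausdorff space $\mu X\times K$, and let $p\colon\overline G\to\mu X$ and $r\colon\overline G\to K$ be the restrictions of the two coordinate projections. Since $\overline G$ is compact and $\mu X$ is Hausdorff, $p$ is a closed map with compact point-inverses; as $p(\overline G)$ is closed and contains the dense set $p(G)=X$, the map $p$ is a perfect surjection onto $\mu X$. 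Hence by Proposition~\ref{Whyburn results}(2) the outer inverse $p^{-1}$ is USCCV, and by Proposition~\ref{Whyburn results}(1) the composite $\hat F:=r\circ p^{-1}$, i.e. $\hat F(t)=\{y\in K:(t,y)\in\overline G\}$, is a USCCV multifunction $\mu X\to K$. Moreover $\hat F(\mu X)=r(\overline G)$ is a closed subset of $K$ containing the dense set $F(X)$, so $\hat F(\mu X)=K$.

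Next I would verify $\hat F\restrict X=F$, which amounts to the identity $\overline G\cap(X\times K)=G$; the inclusion $\supseteq$ is trivial. For $\subseteq$, suppose $x\in X$ and $y\notin F(x)$: since $F(x)$ is compact and $K$ is compact Hausdorff, choose disjoint open $U\supseteq F(x)$ and $W\ni y$ in $K$, and an open $O\subseteq\mu X$ with $O\cap X=F^{-1}_{in}(U)$ (open in $X$ and containing $x$). Then $O\times W$ is a neighbourhood of $(x,y)$ in $\mu X\times K$ that misses $G$, because any $(x',y')\in G$ with $x'\in O\cap X=F^{-1}_{in}(U)$ has $y'\in F(x')\subseteq U$, hence $y'\notin W$. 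So $(x,y)\notin\overline G$. This gives $p^{-1}(x)=\{x\}\times F(x)$ for $x\in X$, hence $\hat F(x)=F(x)$.

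I would then handle the ``furthermore'' clause as follows. Assume $F$ is perfect; viewing $F$ as a multifunction onto its image, Proposition~\ref{USCCV perfect graphs} gives that $\pi_{F(X)}\colon G\to F(X)$ is a perfect map. I claim that every $(t,y)\in\overline G$ with $y\in F(X)$ has $t\in X$ (and hence $(t,y)\in G$ by the previous paragraph). Indeed, the fibre $\pi_{F(X)}^{-1}(y)=F^{-1}(y)\times\{y\}$ is a compact subset of $X\times\{y\}$, so if $t\in\mu X\setminus X$ we may separate the compact set $F^{-1}(y)$ from $t$ by disjoint open $\tilde O_1\supseteq F^{-1}(y)$ and $\tilde O_2\ni t$ in $\mu X$. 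Applying the closed-map form of the tube lemma to $\pi_{F(X)}$ and the open set $\bigl((\tilde O_1\cap X)\times K\bigr)\cap G\supseteq\pi_{F(X)}^{-1}(y)$ yields an open $V\ni y$ in $F(X)$ with $\pi_{F(X)}^{-1}(V)\subseteq(\tilde O_1\cap X)\times K$; picking $\tilde V$ open in $K$ with $\tilde V\cap F(X)=V$, the neighbourhood $\tilde O_2\times\tilde V$ of $(t,y)$ misses $G$ (any $(x',y')\in G$ in it has $y'\in F(x')\subseteq F(X)$, so $y'\in V$, forcing $x'\in\tilde O_1\cap\tilde O_2=\emptyset$), contradicting $(t,y)\in\overline G$. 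Granting the claim, $\hat F(\mu X\setminus X)=\{y:\exists\,t\in\mu X\setminus X,\ (t,y)\in\overline G\}\subseteq K\setminus F(X)$; conversely, for $y\in K\setminus F(X)$, surjectivity $\hat F(\mu X)=K$ gives $t$ with $(t,y)\in\overline G$, and $t\notin X$ since $\hat F(x)=F(x)\subseteq F(X)$ for $x\in X$, so $y\in\hat F(\mu X\setminus X)$. Thus $\hat F(\mu X\setminus X)=K\setminus F(X)$.

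The crux is the claim in the last paragraph: locating the $K$-coordinates that appear in the remainder of the graph. Closedness of $G$ in $X\times K$ uses only that $F$ is USCC with compact values, but pushing this to control the remainder genuinely requires perfectness of $F$ — equivalently, that $\pi_{F(X)}$ is closed with compact fibres — just as in the single-valued case a non-perfect continuous $f$ can have $\hat f(\beta X\setminus X)$ meet $f(X)$. One subtlety worth recording is that ``$F$ is a perfect USCCV multifunction $X\to K$'' must here be read as $F$ being perfect as a multifunction onto its image $F(X)$ (equivalently, $\pi_{F(X)}\colon G\to F(X)$ perfect); perfectness as a map into the compact space $K$ would force $G$, and hence $X$, to be compact and trivialize the statement.
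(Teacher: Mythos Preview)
Your argument is correct, but it proceeds along a different route than the paper's. The paper passes to the Stone--\v{C}ech compactification $\beta Z$ of the graph $Z=\mathrm{graph}(F)$, extends the two projections $f\colon Z\to X$ and $g\colon Z\to K$ to $\hat f\colon\beta Z\to\mu X$ and $\hat g\colon\beta Z\to K$ via the universal property, and sets $\hat F=\hat g\circ\hat f^{-1}$; the extension property and the ``furthermore'' clause are then read off from the classical fact (Proposition~\ref{SC perfect extension}) that perfect maps send remainders to remainders. You instead take the closure $\overline G$ of the graph inside the concrete compact space $\mu X\times K$ and work with the slices directly, proving both $\overline G\cap(X\times K)=G$ and the remainder statement by hand with a tube-lemma argument. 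The two constructions yield the same $\hat F$ (the map $(\hat f,\hat g)\colon\beta Z\to\mu X\times K$ has image exactly $\overline G$), so the difference is one of packaging: the paper's proof is shorter because it cites Engelking, while yours is more self-contained and never needs $\beta Z$. Your closing remark about reading ``$F$ perfect'' as perfect onto its image $F(X)$ is well taken and is exactly how the paper uses it implicitly when it applies Proposition~\ref{SC perfect extension} to $g\colon Z\to F(X)$.
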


\begin{proof}
Let $F:X\rightarrow K$ with $F$, $X$ and $K$ as described in the statement of the Theorem. Using Proposition \ref{USCCV graphs}, we know that $Z=\mbox{graph}(F)$ is a perfect preimage of $X$ and $F(X)\subseteq K$ is a continuous image of a projection from $Z$. Denote by $f:Z\rightarrow X$ and $g:Z\rightarrow K$ the surjective perfect function and the projection mentioned above.

Using the universal property of $\beta Z$ and Propositions \ref{Perfect inverse of perfect} and \ref{SC perfect extension}, we have that the continuous extension $\hat{f}:\beta Z \rightarrow \mu X$ is a perfect surjective function and that $\hat{f}^{-1}:\mu X\rightarrow \beta Z$ is a perfect surjective USCCV multifunction. Furthermore, since $f$ is perfect, $\hat{f}^{-1}(X)=Z$ and $\hat{f}^{-1}(\mu X \setminus X)=\beta Z\setminus Z$. For the continuous extenson of $g$, given that $K$ is compact and $g(Z)=F(X)$ is dense in $K$, $\hat{g}:\beta Z\rightarrow K$ is a surjective function. Define $\hat{F}: \mu X\rightarrow K$ as $\hat{F}=\hat{g}\circ \hat{f}^{-1}$.

Notice that $\hat{F}$ is a composition of two surjective USCCV multifunctions, so it is also surjective USCCV. On the other hand, since we know the exact definition of $Z$, we know that for each $x\in X$, 
\[\hat{F}(x)=\hat{g}\circ \hat{f}^{-1}(x)=g(\{x\}\times F(x))=F(x).\]
This shows that $\hat{F}$ is a USCCV extension of $F$.

Finally, assume that $F$ was perfect to begin with. Using the same definitions as above and Proposition \ref{USCCV perfect graphs}, we know that $g$ is perfect. Therefore, 
\[\hat{F}(\mu X\setminus X)=\hat{g}\circ \hat{f}^{-1}(\mu X\setminus X)=\hat{g}(\beta Z\setminus Z)=K\setminus g(Z)=K\setminus F(X).\]
\end{proof}

\section{Important old results and some new proofs}\label{section old results}

As seen in the last section, perfect functions behave in a nice way with respect to the Stone-\v{C}ech compactification and its remainder. Because of that, we can find multiple results in the literature about this special kind of function. In addition to Proposition \ref{SC perfect extension}, here we compile some other results that will be useful:

\begin{propo}\label{Cech complete prod}
\emph{\cite{Eng89}} The  countable product, countable intersection and the finite union of \v{C}ech-complete spaces are \v{C}ech-complete.
\end{propo}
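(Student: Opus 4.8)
The plan is to reduce all three statements to the characterization recorded above: a (completely regular) space $Z$ is \v{C}ech-complete iff it is $G_{\delta}$ in its Stone-\v{C}ech compactification, and — by \v{C}ech's theorem — this happens iff $Z$ is $G_{\delta}$ in \emph{some} compactification, equivalently iff the remainder of $Z$ in that compactification is $\sigma$-compact (in a compact Hausdorff space a subset is $F_{\sigma}$ exactly when it is $\sigma$-compact). I will also use the one-line fact that a closed subspace $C$ of a \v{C}ech-complete space $Z$ is \v{C}ech-complete: if $Z = \beta Z \cap G$ with $G$ a $G_{\delta}$ of $\beta Z$, then $C = \overline{C}^{\,\beta Z}\cap G$ is $G_{\delta}$ in the compactification $\overline{C}^{\,\beta Z}$ of $C$.

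For the \emph{countable product}, let each $X_{n}$ ($n\in\omega$) be \v{C}ech-complete and work in the compactification $\prod_{n}\beta X_{n}$ of $\prod_{n}X_{n}$. Its remainder is $\bigcup_{n}\pi_{n}^{-1}(X_{n}^{\ast})$, where $\pi_{n}$ is the $n$-th projection. Each $\pi_{n}^{-1}(X_{n}^{\ast})$ is $X_{n}^{\ast}\times\prod_{m\neq n}\beta X_{m}$, a product of a $\sigma$-compact space with a compact space, hence $\sigma$-compact; a countable union of $\sigma$-compact sets is $\sigma$-compact, so the remainder is $\sigma$-compact and $\prod_{n}X_{n}$ is \v{C}ech-complete.

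For the \emph{countable intersection}, read the $X_{n}$ as subspaces of a common completely regular space $Y$ and embed $Y$ in $\beta Y$. Since $X_{n}$ is dense in the compact set $A_{n}=\overline{X_{n}}^{\,\beta Y}$, \v{C}ech-completeness gives $X_{n}=A_{n}\cap G_{n}$ for some $G_{\delta}$ set $G_{n}$ of $\beta Y$; hence $\bigcap_{n}X_{n}=\bigl(\bigcap_{n}A_{n}\bigr)\cap\bigl(\bigcap_{n}G_{n}\bigr)$ is a closed set intersected with a $G_{\delta}$ set of $\beta Y$, and intersecting once more with $\overline{\bigcap_{n}X_{n}}^{\,\beta Y}$ displays $\bigcap_{n}X_{n}$ as a $G_{\delta}$ subspace of a compactification of itself, so it is \v{C}ech-complete. (Alternatively one could embed $\bigcap_{n}X_{n}$ diagonally as a closed subspace of the — already known — \v{C}ech-complete space $\prod_{n}X_{n}$ and invoke the closed-subspace fact.)

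For the \emph{finite union} it is enough, by induction, to handle $X=X_{1}\cup X_{2}$ with $X_{1},X_{2}$ \v{C}ech-complete subspaces of $X$; work in $\beta X$ and, as before, write $X_{i}=A_{i}\cap G_{i}$ with $A_{i}=\overline{X_{i}}^{\,\beta X}$ closed and $G_{i}$ a $G_{\delta}$ of $\beta X$. Taking complements in $\beta X$,
\[
\beta X\setminus X=\bigl[(\beta X\setminus A_{1})\cup(\beta X\setminus G_{1})\bigr]\cap\bigl[(\beta X\setminus A_{2})\cup(\beta X\setminus G_{2})\bigr].
\]
In the normal space $\beta X$ every open set is $F_{\sigma}$ and the complement of a $G_{\delta}$ is $F_{\sigma}$, so each bracket is a union of two $F_{\sigma}$ sets, hence $F_{\sigma}$; and a finite intersection of $F_{\sigma}$ sets is $F_{\sigma}$. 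Thus $\beta X\setminus X$ is $F_{\sigma}$, i.e.\ $X$ is $G_{\delta}$ in $\beta X$, i.e.\ \v{C}ech-complete. I expect no real obstacle; the only point needing care is exactly this last case, where it is tempting — but false — to say a union of two $G_{\delta}$ sets is $G_{\delta}$: the argument must route through the closures $A_{i}$ and use that inside a compact space the $F_{\sigma}$ sets are closed under \emph{both} finite unions and finite intersections, together with \v{C}ech's theorem to test \v{C}ech-completeness in whichever compactification is convenient.
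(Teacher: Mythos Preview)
The paper does not prove this proposition at all; it simply cites Engelking \cite{Eng89}. So there is no ``paper's own proof'' to compare with, and the relevant question is whether your sketch is correct on its own.

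Your arguments for the countable product and the countable intersection are fine. The product case is the standard remainder computation in $\prod_n \beta X_n$, and the intersection case is the usual ``trace of a $G_\delta$ on a closed set'' argument; the diagonal-embedding alternative you mention also works.

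The finite-union case, however, contains a genuine error. You assert that ``in the normal space $\beta X$ every open set is $F_\sigma$''. This is false: that property is \emph{perfect normality}, not normality, and compact Hausdorff spaces are rarely perfectly normal. For a concrete counterexample, take $p\in\beta\mathbb{N}\setminus\mathbb{N}$; then $\beta\mathbb{N}\setminus\{p\}$ is open but not $F_\sigma$, since $\{p\}$ is not a $G_\delta$ in $\beta\mathbb{N}$. Your argument uses this false claim to conclude that the open sets $\beta X\setminus A_i$ are $F_\sigma$, and there is no reason they should be: with $A_1\cup A_2=\beta X$ one has $\beta X\setminus A_1=A_2\setminus(A_1\cap A_2)$, an arbitrary open subset of the compact space $A_2$, which need not be $\sigma$-compact. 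The displayed intersection-of-brackets formula is correct, but the claim that each bracket is $F_\sigma$ is unjustified, and expanding the product does not save the argument either: the cross-terms $(\beta X\setminus A_1)\cap(\beta X\setminus G_2)$ are of the form (open) $\cap$ ($F_\sigma$), which again need not be $F_\sigma$ in a non-perfectly-normal compactum.

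The result is nonetheless true, but Engelking's proof does not go through the $F_\sigma$ bookkeeping you attempt. One route is to first establish the special case in which one of the two pieces is \emph{closed} (or, dually, open) in $X$ --- there the remainder computation does go through cleanly --- and then reduce the general case to that one; another is to work with the internal characterization of \v{C}ech-completeness via complete sequences of open covers, where finite unions are handled directly. Either way, the step you flagged as ``the only point needing care'' is exactly where your argument breaks, and it needs a different idea rather than a patch.
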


\begin{propo}\emph{\cite{RJ80}}\label{K closure under countable}
$K$-analytic spaces are closed under countable unions, countable intersections, countable products, and continuous maps.
\end{propo}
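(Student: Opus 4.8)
The plan is to work with the characterization of $K$-analyticity as being a USCCV image of $\mathds{P}$ (Theorem \ref{USCCV of irrationals}), equivalently a CIPP of $\mathds{P}$ (Corollary \ref{CIPP and USCCV}), equivalently a continuous image of a Lindel\"of \v{C}ech-complete space (Definition \ref{def:K-analytic}), moving freely between these three forms. The first observation I would record is that the USCCV image of a $K$-analytic space is again $K$-analytic: if $G:\mathds{P}\to X$ and $F:X\to Y$ are surjective USCCV multifunctions then $F\circ G:\mathds{P}\to Y$ is USCCV by Proposition \ref{Whyburn results}(1) and surjective, so $Y$ is a USCCV image of $\mathds{P}$; taking $F$ single-valued and continuous gives closure under continuous maps. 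For countable products, let $Y_n$ be $K$-analytic with surjective USCCV $F_n:\mathds{P}\to Y_n$ ($n\in\omega$); by Proposition \ref{powerfully USCCV} the product multifunction $\prod_{n}F_n:\mathds{P}^{\omega}\to\prod_{n}Y_n$ is USCCV and it is visibly surjective, and since $\mathds{P}^{\omega}\cong\mathds{P}$ (equivalently, since $\mathds{P}^{\omega}$ is Polish, hence Lindel\"of \v{C}ech-complete, hence $K$-analytic) the previous observation shows $\prod_{n}Y_n$ is $K$-analytic.

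For countable unions, suppose $Y_n\subseteq Z$ are $K$-analytic with surjective USCCV $F_n:\mathds{P}\to Y_n$ ($n\in\omega$), and define $F:\mathds{P}\times\omega\to\bigcup_{n}Y_n$ (with $\omega$ discrete) by $F(x,n)=F_n(x)$. Each value is compact, and for a closed set $C$ one has $F^{-1}(C)=\bigcup_{n}\bigl(F_n^{-1}(C)\times\{n\}\bigr)$, which is closed because the clopen slices $\mathds{P}\times\{n\}$ are pairwise disjoint and each $F_n^{-1}(C)$ is closed; hence $F$ is USCCV, and clearly $F$ is onto $\bigcup_{n}Y_n$. Since $\mathds{P}\times\omega\cong\mathds{P}$, we get that $\bigcup_{n}Y_n$ is $K$-analytic.

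For countable intersections the one extra ingredient is that a closed subspace $C$ of a $K$-analytic space $X$ is $K$-analytic: writing $X=g(W)$ with $W$ Lindel\"of \v{C}ech-complete and $g$ continuous, $g^{-1}(C)$ is a closed subspace of $W$, hence Lindel\"of and \v{C}ech-complete, and $g\restrict g^{-1}(C)$ maps it continuously onto $C$. Granting this, let $Y_n\subseteq Z$ be $K$-analytic ($n\in\omega$), with $Z$ Hausdorff; the diagonal $\Delta=\{(z)_{n\in\omega}:z\in Z\}$ is closed in $Z^{\omega}$, so $\Delta\cap\prod_{n}Y_n=\{(z)_{n\in\omega}:z\in\bigcap_{n}Y_n\}$ is a closed subspace of $\prod_{n}Y_n$, and $z\mapsto(z)_{n\in\omega}$ is a homeomorphism of $\bigcap_{n}Y_n$ onto it. As $\prod_{n}Y_n$ is $K$-analytic by the product case, $\bigcap_{n}Y_n$ is $K$-analytic.

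The main obstacle, modest as it is, is the closed-subspace step, which rests on the fact that \v{C}ech-completeness is inherited by closed subspaces; I would cite this from \cite{Eng89} rather than reprove it (it is in any case immediate from the definition used here: if $C$ is closed in $W$ then $C=W\cap\mathrm{cl}_{\beta W}(C)$, so $\mathrm{cl}_{\beta W}(C)\setminus C\subseteq W^{\ast}$ is a closed, hence $\sigma$-compact, subset of the $\sigma$-compact remainder $W^{\ast}$, and $\mathrm{cl}_{\beta W}(C)$ is a compactification of $C$ with $\sigma$-compact remainder). Everything else is a routine combination of Propositions \ref{Whyburn results} and \ref{powerfully USCCV} with the standard homeomorphisms $\mathds{P}\cong\mathds{P}^{\omega}\cong\mathds{P}\times\omega$.
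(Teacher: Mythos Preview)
The paper does not actually give its own proof of this proposition; it is simply quoted from \cite{RJ80} in the ``old results'' section. Your argument is correct and is essentially the standard one: it uses exactly the machinery the paper sets up (Proposition \ref{Whyburn results} for composition, Proposition \ref{powerfully USCCV} for products, the homeomorphisms $\mathds{P}\cong\mathds{P}^{\omega}\cong\mathds{P}\times\omega$, and closed-hereditariness of Lindel\"of \v{C}ech-completeness for the intersection step via the diagonal). There is nothing to compare against here, and nothing to fix.
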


\begin{propo}\label{perfect restriction}
\emph{\cite[3.7.6]{Eng89}} Given a perfect map $g:X\rightarrow Y$ and $B\subseteq Y$, the function $g\restrict g^{-1}(B):g^{-1}(B)\rightarrow B$ is perfect. 
\end{propo}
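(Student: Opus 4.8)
The plan is to verify directly the three defining conditions of a perfect map for $g' := g\restrict g^{-1}(B)$, viewed as a map from $g^{-1}(B)$ (with the subspace topology inherited from $X$) onto its image in $B$ (with the subspace topology inherited from $Y$). Two of the three conditions are essentially immediate, so the real content is closedness.

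First I would dispatch continuity and compact point-inverses. The restriction of the continuous map $g$ to a subspace, with the subspace topology on the target, is continuous, so $g'$ is continuous. For $b\in B$ we have $g^{-1}(b)\subseteq g^{-1}(B)$ automatically, hence $(g')^{-1}(b)=g^{-1}(b)$, which is compact because $g$ is perfect.

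The substantive step is that $g'$ is a closed map. A closed subset of $g^{-1}(B)$ has the form $C=\widetilde{C}\cap g^{-1}(B)$ for some $\widetilde{C}$ closed in $X$, and I would show $g'(C)=g(\widetilde{C})\cap B$. The inclusion $\subseteq$ is clear: if $y=g(x)$ with $x\in\widetilde{C}\cap g^{-1}(B)$, then $y\in g(\widetilde{C})$ and, since $x\in g^{-1}(B)$, also $y=g(x)\in B$. For $\supseteq$: given $y\in g(\widetilde{C})\cap B$, pick $x\in\widetilde{C}$ with $g(x)=y$; since $y\in B$ we get $x\in g^{-1}(B)$, so $x\in C$ and $y\in g'(C)$. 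Because $g$ is closed, $g(\widetilde{C})$ is closed in $Y$, and therefore $g'(C)=g(\widetilde{C})\cap B$ is closed in $B$, as required.

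The only place where any care is needed is the set-theoretic identity $g\bigl(\widetilde{C}\cap g^{-1}(B)\bigr)=g(\widetilde{C})\cap B$: it uses crucially that we intersect with the \emph{full} preimage $g^{-1}(B)$ and would fail with an arbitrary subset of $X$ in its place. Everything else is routine bookkeeping with subspace topologies, so I do not anticipate a genuine obstacle.
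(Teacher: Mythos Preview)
Your argument is correct: continuity and compact fibres are immediate, and the key identity $g(\widetilde{C}\cap g^{-1}(B))=g(\widetilde{C})\cap B$ is exactly what makes closedness go through, as you note. The paper does not supply its own proof here; it simply quotes the result from Engelking \cite[3.7.6]{Eng89}, so there is nothing further to compare.
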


\begin{definition}
An space $X$ is \emph{nowhere locally compact (NLC)} if every compact subset of $X$ has empty interior.
\end{definition}

\begin{propo}\label{nwlc SC-Com}
\emph{\cite{vD92}} If a space $X$ is nowhere locally compact, then it and its Stone-\v{C}ech remainder $X^{\ast}$ have the same Stone-\v{C}ech compactification and $X^{\ast}$ is nowhere locally compact.
\end{propo}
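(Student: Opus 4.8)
The plan is to prove the two assertions — that $\beta(X^{\ast}) = \beta X$ and that $X^{\ast}$ is nowhere locally compact — essentially separately, with the first being the substantive one.

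First I would establish $\beta(X^{\ast}) = \beta X$. Since $X^{\ast} = \beta X \setminus X$ is a subspace of the compact space $\beta X$, its closure $\operatorname{cl}_{\beta X}(X^{\ast})$ is a compactification of $X^{\ast}$, and it suffices to show (a) that $X^{\ast}$ is dense in $\beta X$, so that $\operatorname{cl}_{\beta X}(X^{\ast}) = \beta X$, and (b) that this compactification has the universal extension property characterizing $\beta(X^{\ast})$ — equivalently, that $X^{\ast}$ is $C^{\ast}$-embedded in $\beta X$. For (a): if $X^{\ast}$ were not dense, there would be a nonempty open $U \subseteq \beta X$ with $U \subseteq X$; then $U$ is locally compact (open in the compact $\beta X$), so picking $x \in U$ and a compact neighborhood of $x$ inside $U \subseteq X$ contradicts that $X$ is NLC. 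For (b), the key point: a closed subset $F$ of $\beta X$ contained in $X^{\ast}$ is compact and disjoint from $X$; since $X$ is dense in $\beta X$, $F$ has empty interior in $\beta X$. One then uses that $X^{\ast}$ is dense in $\beta X$ together with a Urysohn/extension argument: any bounded continuous $h : X^{\ast} \to \mathds{R}$ extends to $\beta X$ because $X^{\ast}$ is a dense $C^{\ast}$-embedded subspace. The cleanest route is to quote the standard fact that a dense subspace $D$ of a compact Hausdorff space $Z$ satisfies $\beta D = Z$ iff $D$ is $C^{\ast}$-embedded in $Z$, and then verify $C^{\ast}$-embedding of $X^{\ast}$ in $\beta X$ directly, or cite that this is exactly the content of the result in \cite{vD92}. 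I expect the $C^{\ast}$-embedding verification to be the main obstacle, since density alone is not enough; one needs that continuous bounded functions on the remainder do not ``see'' anything obstructing extension, which is where NLC is used a second time (to prevent $X^{\ast}$ from having isolated-type behavior relative to $\beta X$).

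Next, granting $\beta(X^{\ast}) = \beta X$, I would show $X^{\ast}$ is NLC. Suppose $C \subseteq X^{\ast}$ is compact with nonempty interior relative to $X^{\ast}$; so there is an open $W \subseteq \beta X = \beta(X^{\ast})$ with $\emptyset \neq W \cap X^{\ast} \subseteq C$. Shrinking, we may assume $\operatorname{cl}_{\beta X} W \cap X^{\ast} \subseteq C$, so $\operatorname{cl}_{\beta X} W \subseteq C \cup X$. But $C$ is compact hence closed in $\beta X$, and $\operatorname{cl}_{\beta X} W$ is compact; the set $\operatorname{cl}_{\beta X} W \setminus C$ is then an open-in-its-closure piece sitting inside $X$. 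Making this precise: $\operatorname{cl}_{\beta X} W \setminus C$ is a locally compact dense subset of the compact set $\operatorname{cl}_{\beta X}W$ (dense because $X$ is dense in $\beta X$ and, by the first part, $X \cap \operatorname{cl}_{\beta X} W$ meets every open subset of $\operatorname{cl}_{\beta X}W$ — here I use that $C$ has empty interior in $\beta X$, which follows since $C \subseteq X^{\ast}$ and $X$ is dense). Then a point of $X$ inside $W$ has a compact neighborhood contained in $X$, again contradicting that $X$ is NLC. Thus no such $C$ exists and $X^{\ast}$ is NLC.

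I would also remark that both halves hinge on the single structural fact that in $\beta X$ the remainder $X^{\ast}$ is a dense subset with empty-interior complement-of-a-compact sets, so the argument is uniform; the genuinely delicate input is the $C^{\ast}$-embedding of $X^{\ast}$, and if a self-contained proof of that threatens to be long, the honest move is to simply cite \cite{vD92} for the equality $\beta(X^{\ast}) = \beta X$ and give only the short derivation of the NLC conclusion from it.
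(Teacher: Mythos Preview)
The paper does not prove this proposition at all: it is stated with a citation to \cite{vD92} and used as a black box, so there is no ``paper's proof'' to compare your attempt against. Your sketch correctly identifies the structure of the argument and handles the easy pieces (density of $X^{\ast}$ in $\beta X$, and NLC of $X^{\ast}$), but you are right that the substantive step is the $C^{\ast}$-embedding of $X^{\ast}$ in $\beta X$, and you do not actually carry it out---you defer to \cite{vD92}, which is exactly what the paper does.

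One remark on your part (2): the argument can be shortened. If $C\subseteq X^{\ast}$ is compact with nonempty interior in $X^{\ast}$, take open $W\subseteq\beta X$ with $\emptyset\neq W\cap X^{\ast}\subseteq C$. Since $C$ is closed in $\beta X$, the set $W\setminus C$ is open in $\beta X$, disjoint from $X^{\ast}$, hence contained in $X$; it is nonempty because $X$ is dense and $C\cap X=\emptyset$. By regularity of $\beta X$ any point of $W\setminus C$ then has a compact neighbourhood inside $X$, contradicting NLC of $X$. No shrinking to closures or density-of-$X$-in-$\overline{W}$ is needed. This is the same mechanism as your density argument, so the two short parts really are instances of one observation; the $C^{\ast}$-embedding is the only genuine content, and for that the citation is appropriate.
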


\begin{propo}\label{Perfect property lemma}
\emph{\cite{Arh2013}} Let $P$ be a perfect property and $X$ a topological space. If the remainder of $X$ in some compactification has property $P$, then the remainder in any compactification has property $P$.
\end{propo}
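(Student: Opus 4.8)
The plan is to show that every remainder of $X$ is tied to the Stone-\v{C}ech remainder $X^{\ast}=\beta X\setminus X$ by perfect surjections running both ways; the statement then falls out immediately from the definition of a perfect property.

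First I would fix an arbitrary compactification $\mu X$ and let $q_{\mu}\colon\beta X\to\mu X$ be the canonical continuous extension of $\mathrm{id}_X$, given by the universal property of the Stone-\v{C}ech compactification (the corollary of Proposition~\ref{SC perfect extension}). It is surjective, since $q_{\mu}(\beta X)$ is a compact, hence closed, subset of $\mu X$ containing the dense set $X$; and it is perfect, being a continuous map from a compact space to a Hausdorff space, so it is closed and its point-inverses are closed subsets of $\beta X$, hence compact. The one step that is not purely formal is the claim
\[q_{\mu}^{-1}(X)=X.\]
To see it, suppose $p\in\beta X$ with $q_{\mu}(p)=x\in X$, and pick a net $\langle x_{\alpha}\rangle$ in $X$ converging to $p$ in $\beta X$, which is possible because $X$ is dense in $\beta X$. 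Then $x_{\alpha}=q_{\mu}(x_{\alpha})\to q_{\mu}(p)=x$ in $\mu X$; since the topology $X$ inherits from $\mu X$ is its original topology, $x_{\alpha}\to x$ in $X$, and therefore $x_{\alpha}\to x$ in $\beta X$ as well. As $\beta X$ is Hausdorff, $p=x\in X$, proving the claim.

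From $q_{\mu}^{-1}(X)=X$ we get $q_{\mu}^{-1}(\mu X\setminus X)=X^{\ast}$, so Proposition~\ref{perfect restriction} makes the restriction $q_{\mu}\restrict X^{\ast}\colon X^{\ast}\to\mu X\setminus X$ a perfect map; it is onto because $q_{\mu}$ is onto while $q_{\mu}^{-1}(X)=X$. Hence, for \emph{every} compactification $\mu X$, the remainder $\mu X\setminus X$ is a perfect image of $X^{\ast}$, and $X^{\ast}$ is a perfect preimage of $\mu X\setminus X$. Now, given two compactifications $\mu_{1}X,\mu_{2}X$ with $\mu_{1}X\setminus X$ having the perfect property $P$: the perfect surjection $q_{\mu_{1}}\restrict X^{\ast}$ exhibits $X^{\ast}$ as a perfect preimage of a space with $P$, so $X^{\ast}$ has $P$; then the perfect surjection $q_{\mu_{2}}\restrict X^{\ast}$ exhibits $\mu_{2}X\setminus X$ as a perfect image of $X^{\ast}$, so $\mu_{2}X\setminus X$ has $P$. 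The only real obstacle is establishing $q_{\mu}^{-1}(X)=X$; after that, everything is routine bookkeeping with perfect maps.
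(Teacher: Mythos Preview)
The paper does not supply its own proof of this proposition; it is merely quoted from \cite{Arh2013} as a known result in Section~\ref{section old results}. Your argument is correct and is the standard one: the canonical quotient $q_\mu\colon\beta X\to\mu X$ is perfect (continuous from compact to Hausdorff), and once one knows $q_\mu^{-1}(X)=X$ the restriction $q_\mu\restrict X^{\ast}$ is a perfect surjection onto $\mu X\setminus X$, so any two remainders are linked through $X^{\ast}$ by perfect maps. Your net argument for $q_\mu^{-1}(X)=X$ is valid; this fact is exactly what is recorded (in slightly different language) in the discussion around Proposition~\ref{SC perfect extension}, and is Engelking's Theorem~3.5.7.
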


For the analysis of the hierarchies and classes of our interest, the following results will be useful. We will give proofs of Proposition \ref{USCCV of irrationals} and its Corollary \ref{$K$-anal is perfect} that are simpler than the ones in \cite{RJ80}.

\begin{propo}\label{USCCV of irrationals}
\emph{\cite{RJ80}} Every USCCV image of $\mathds{P}$ is $K$-analytic.
\end{propo}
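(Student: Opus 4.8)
## Proof proposal for Proposition \ref{USCCV of irrationals}

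The plan is to exploit the characterization of USCCV images as CIPP (Corollary \ref{CIPP and USCCV}) together with the classical fact that $\mathds{P}$ is Lindel\"of \v{C}ech-complete, and then close the loop with the Rogers--Jayne style of argument that Lindel\"of \v{C}ech-complete spaces are perfect preimages of $\mathds{P}$. More precisely: suppose $Y$ is a USCCV image of $\mathds{P}$. By Corollary \ref{CIPP and USCCV}, $Y$ is a CIPP of $\mathds{P}$, i.e. there is a space $Z$ with a perfect surjection $f\colon Z\to\mathds{P}$ and a continuous surjection $g\colon Z\to Y$. So it suffices to show that $Z$ is \v{C}ech-complete and Lindel\"of; then $Y$, as its continuous image, is $K$-analytic by Definition \ref{def:K-analytic}.

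For \v{C}ech-completeness of $Z$: $\mathds{P}$ is \v{C}ech-complete, so by Proposition \ref{SC perfect extension} the extension $\hat f\colon\beta Z\to\beta\mathds{P}$ is a perfect surjection with $\hat f^{-1}(\mathds{P})=Z$; since $\mathds{P}$ is $G_\delta$ in $\beta\mathds{P}$, its preimage $Z=\hat f^{-1}(\mathds{P})$ is $G_\delta$ in $\beta Z$, hence $Z$ is \v{C}ech-complete. (Equivalently one can quote that \v{C}ech-completeness is a perfect property and invoke Proposition \ref{Perfect property lemma}, but the direct $G_\delta$-pullback argument is cleanest.) For Lindel\"ofness of $Z$: $\mathds{P}$ is Lindel\"of and Lindel\"of is a perfect property (as used in Corollary \ref{USCCV lin}), so the perfect preimage $Z$ of the Lindel\"of space $\mathds{P}$ is Lindel\"of. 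Thus $Z$ is a Lindel\"of \v{C}ech-complete space, and $Y=g(Z)$ is by definition $K$-analytic.

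The one point requiring a little care — and the place where an ``elementary'' argument could go wrong — is that the CIPP witness $Z$ furnished by Corollary \ref{CIPP and USCCV} (via $\mathrm{graph}(F)$) lives inside $\mathds{P}\times Y$, where $Y$ is merely completely regular, so one should not assume $Z$ is metrizable or separable a priori; the argument above deliberately uses only that $Z$ is a perfect preimage of $\mathds{P}$, which is exactly what Proposition \ref{USCCV graphs} delivers. The main obstacle, such as it is, is therefore just organizing the two pullback facts (preservation of $G_\delta$ under perfect maps, and preservation of Lindel\"ofness under perfect preimages) rather than any genuine difficulty; this is why the authors advertise it as simpler than the original proof in \cite{RJ80}, which presumably went through the USCCV formalism more laboriously.
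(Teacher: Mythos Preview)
Your proof is correct and follows essentially the same route as the paper's: both pass through the graph of the USCCV multifunction (you via Corollary \ref{CIPP and USCCV}, the paper directly via Proposition \ref{USCCV graphs}), observe that this graph is a perfect preimage of $\mathds{P}$ and hence Lindel\"of \v{C}ech-complete, and conclude that its continuous image is $K$-analytic. Your version spells out the \v{C}ech-completeness step via the $G_\delta$-pullback through $\hat f$, whereas the paper simply invokes that Lindel\"of \v{C}ech-complete is a perfect property; the aside in your plan about ``Lindel\"of \v{C}ech-complete spaces are perfect preimages of $\mathds{P}$'' is not actually needed or used.
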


\begin{proof}
Let $F:\omega^{\omega}\rightarrow X$ be a surjective USCCV multifunction. Using Proposition \ref{USCCV graphs}, $\mbox{graph}(F)$ is a perfect preimage of $\omega^{\omega}$, a Lindel\"of \v{C}ech-complete space. Then, $\mbox{graph}(F)$ is  Lindel\"of \v{C}ech-complete. By the same proposition, $X$ is a continuous image of it and therefore $X$ is $K$-analytic.
\end{proof}

\begin{propo}\label{perfect K preimage}
\emph{\cite{RJ80}} The perfect preimage of a $K$-analytic space is $K$-analytic.
\end{propo}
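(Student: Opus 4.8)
The plan is to reduce the statement to the two tools we already have: Proposition \ref{USCCV of irrationals} (USCCV images of $\mathds{P}$ are $K$-analytic, equivalently, by Theorem \ref{USCCV of irrationals}, $K$-analytic spaces are exactly USCCV images of $\mathds{P}$) and the closure of $K$-analytic spaces under continuous maps together with the CIPP characterization (Corollary \ref{CIPP and USCCV}). Concretely: let $X$ be $K$-analytic and let $g:Y\rightarrow X$ be a perfect surjection; I want to show $Y$ is $K$-analytic.

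First I would unpack ``$X$ is $K$-analytic'' via Theorem \ref{USCCV of irrationals}: there is a surjective USCCV multifunction $G:\mathds{P}\rightarrow X$. By Corollary \ref{CIPP and USCCV}, $X$ is a CIPP of $\mathds{P}$, so we have a space $Z$, a perfect surjection $f:Z\rightarrow \mathds{P}$, and a continuous surjection $h:Z\rightarrow X$. Actually the cleanest route avoids even this decomposition: since $g:Y\rightarrow X$ is perfect, the outer inverse multifunction $g^{-1}:X\rightarrow Y$ is a perfect USCCV multifunction (this is Proposition \ref{Whyburn results}(2) together with the observation in Proposition \ref{Perfect inverse of perfect}), and in particular a surjective USCCV multifunction. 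Now compose: $g^{-1}\circ G:\mathds{P}\rightarrow Y$ is a composition of USCCV multifunctions, hence USCCV by Proposition \ref{Whyburn results}(1), and it is surjective onto $Y$ because $G$ is onto $X$ and $g^{-1}(x)=g^{-1}(\{x\})$ is nonempty for every $x\in X$ (as $g$ is surjective), with $\bigcup_{x\in X} g^{-1}(x)=Y$ since $g$ is a function on $Y$. Therefore $Y$ is a USCCV image of $\mathds{P}$, and by Proposition \ref{USCCV of irrationals}, $Y$ is $K$-analytic.

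The only point that needs a little care — and the step I would expect to be the main (minor) obstacle — is verifying that $g^{-1}$, as a multifunction from $X$ to $Y$, is genuinely \emph{USCCV}, not merely that its own outer inverse is. Here Proposition \ref{Whyburn results}(2) says that a multifunction $g$ is closed with compact singleton-inverse-values if and only if $g^{-1}$ is USCCV; a perfect map $g$ is by definition closed with compact point-inverses, so $g^{-1}$ is USCCV, and its values $g^{-1}(\{x\})$ are compact (again because $g$ is perfect), so it is compact-valued as well. That is precisely what the composition argument requires; I should also note that upper semi-continuity of $g$ itself is irrelevant here, exactly as flagged after Proposition \ref{Whyburn results}. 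With this in hand the composition $g^{-1}\circ G$ is a surjective USCCV multifunction $\mathds{P}\rightarrow Y$ and Proposition \ref{USCCV of irrationals} finishes the proof.

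One could equally phrase the whole thing in CIPP language — $Y$ is a perfect preimage of $X$, $X$ is a CIPP of $\mathds{P}$, so $Y$ is a perfect preimage of a continuous image of a perfect preimage of $\mathds{P}$; it then remains to commute the middle ``continuous image'' past the outer ``perfect preimage'', which is exactly the non-obvious composition fact the paper points out is clean for USCCV but not for CIPP — so the USCCV formulation above is the one I would actually write down.
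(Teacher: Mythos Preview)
Your argument is correct. Note, however, that the paper does not supply its own proof of this proposition: it is simply quoted from \cite{RJ80}, and the proof block that follows it in the text belongs to Corollary \ref{$K$-anal is perfect}, not to Proposition \ref{perfect K preimage}. So there is nothing to compare against directly.

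That said, your proof is exactly in the spirit of the simplified arguments the paper \emph{does} give for the surrounding results (Propositions \ref{USCCV of irrationals} and \ref{K analytic metrazable}): realize $X$ as a USCCV image of $\mathds{P}$, invoke Proposition \ref{Whyburn results}(2) to get that $g^{-1}$ is USCCV, compose, and appeal to Proposition \ref{USCCV of irrationals}. One small redundancy: once Proposition \ref{Whyburn results}(2) tells you $g^{-1}$ is USCCV, compact-valuedness is already included in the acronym, so the separate verification you give is unnecessary (though harmless). The CIPP discussion at the end is accurate commentary but not needed for the proof.
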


\begin{corollary}\label{$K$-anal is perfect}
Being $K$-analytic is a perfect property.
\end{corollary}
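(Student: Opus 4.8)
The plan is to show that being $K$-analytic is preserved by both perfect maps and perfect preimages. One direction is already in hand: Proposition~\ref{perfect K preimage} tells us that a perfect preimage of a $K$-analytic space is $K$-analytic. So the only thing that remains is to verify that a perfect \emph{image} of a $K$-analytic space is $K$-analytic. For this I would argue directly from Definition~\ref{def:K-analytic}: if $X$ is $K$-analytic then $X$ is a continuous image of a Lindel\"of \v{C}ech-complete space $Z$, say via $h:Z\to X$, and if $g:X\to Y$ is perfect and surjective, then $g\circ h:Z\to Y$ exhibits $Y$ as a continuous image of the same Lindel\"of \v{C}ech-complete space $Z$, hence $Y$ is $K$-analytic. (Continuity of the composition is all that is used here; perfectness of $g$ is not even needed, since continuous images of $K$-analytic spaces are $K$-analytic by Proposition~\ref{K closure under countable}.)

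Combining the two halves: a perfect map sends a $K$-analytic space to a $K$-analytic space, and a perfect preimage of a $K$-analytic space is $K$-analytic; therefore $K$-analyticity is a perfect property, which is exactly the statement of the corollary.

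\textbf{Main obstacle.} Honestly, there is no serious obstacle here once Propositions~\ref{USCCV of irrationals} and~\ref{perfect K preimage} are available; the corollary is essentially a bookkeeping consequence. The only point requiring a moment's care is making sure the composition $g\circ h$ really does land us back in the defining class, i.e.\ that we are composing a continuous surjection onto $X$ with a continuous (in fact perfect) surjection $X\to Y$ to get a continuous surjection of a Lindel\"of \v{C}ech-complete space onto $Y$ — and that this is precisely the definition of $K$-analytic for $Y$. Since the image direction uses only continuity, one could even remark that $K$-analyticity is preserved by the broader class of continuous images, and a fortiori by perfect maps, so that together with Proposition~\ref{perfect K preimage} it is a perfect property.
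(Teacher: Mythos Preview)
Your proof is correct and matches the paper's own argument essentially verbatim: the paper also observes that $K$-analyticity is preserved by continuous maps (hence by perfect images) and then invokes Proposition~\ref{perfect K preimage} for the preimage direction. Your remark that only continuity is used for the image half is exactly the point the paper makes.
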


\begin{proof}
Since $K$-analyticity is preserved by continuous maps, it is certainly preserved by perfect images. Proposition \ref{perfect K preimage} gives us the other part.
\end{proof}

\begin{propo}\label{K analytic metrazable}
\emph{\cite{RJ80}} Every $K$-analytic metrizable space is analytic.
\end{propo}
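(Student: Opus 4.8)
The plan is to reduce to the classical fact that a continuous image of a Polish space is analytic, using the graph of a USCCV multifunction as the intermediate Polish space.

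Suppose $X$ is $K$-analytic and metrizable. Since $K$-analytic spaces are Lindel\"of (Corollary \ref{USCCV lin}, applied to the Lindel\"of \v{C}ech-complete witness of Definition \ref{def:K-analytic}) and a Lindel\"of metrizable space is separable, $X$ is separable metrizable; fix an embedding $X\subseteq Q$, where $Q=[0,1]^{\omega}$ is the Hilbert cube. By the Rogers--Jayne characterization of $K$-analytic spaces as the USCCV images of $\mathds{P}$ (Theorem \ref{USCCV of irrationals}), fix a surjective USCCV multifunction $F:\mathds{P}\to X$; composing with the inclusion $X\hookrightarrow Q$ we regard $F$ as a USCCV multifunction $F:\mathds{P}\to Q$.

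The key step is to show that $G:=\operatorname{graph}(F)=\bigcup_{p\in\mathds{P}}\{p\}\times F(p)\subseteq\mathds{P}\times X\subseteq\mathds{P}\times Q$ is Polish. The quickest route is to observe that $G$ is closed in $\mathds{P}\times Q$: this is the standard closed-graph property of an upper semi-continuous compact-valued multifunction into a Hausdorff space. If $(p_0,y_0)\notin G$, then $y_0\notin F(p_0)$, and since $F(p_0)$ is compact and $Q$ is Hausdorff we can choose disjoint open sets $U\ni y_0$ and $V\supseteq F(p_0)$ in $Q$; then $W:=F^{-1}_{in}(V)$ is open in $\mathds{P}$ and contains $p_0$, and $(W\times U)\cap G=\emptyset$, so the complement of $G$ is open. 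Being a closed subspace of the Polish space $\mathds{P}\times Q$, $G$ is Polish. (Alternatively one can avoid the closed-graph computation: by Proposition \ref{USCCV graphs} the projection $\pi_{\mathds{P}}\restrict G:G\to\mathds{P}$ is perfect, so $G$ is a perfect preimage of the \v{C}ech-complete space $\mathds{P}$ and hence \v{C}ech-complete; and $G$, being a subspace of the second-countable space $\mathds{P}\times Q$, is second-countable, so $G$ is separable metrizable and \v{C}ech-complete, i.e.\ Polish.) Finally, Proposition \ref{USCCV graphs} also gives that $\pi_{X}:G\to X$ is a continuous single-valued surjection, so $X$ is a continuous image of the Polish space $G$ and is therefore analytic (as a subspace of $Q$).

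I do not foresee a genuine obstacle. The only points requiring a little care are: (i) that "metrizable $K$-analytic" forces separability — this is exactly what places $X$ inside a Polish space and makes the conclusion "analytic" meaningful in the sense of the paper; and (ii) the verification that $\operatorname{graph}(F)$ is Polish, which is handled either by the one-line closed-graph argument above or by the perfect-preimage remark. Everything else is the routine fact that continuous images of (nonempty) Polish spaces are analytic.
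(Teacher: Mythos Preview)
Your proof is correct and follows essentially the same route as the paper: take a USCCV $F:\mathds{P}\to X$, observe that $\operatorname{graph}(F)$ is Polish (the paper uses exactly your alternative argument via Proposition~\ref{USCCV graphs} and the fact that \v{C}ech-completeness is a perfect property), and project continuously onto $X$. Your extra closed-graph computation is a harmless embellishment, and your explicit justification that metrizable $K$-analytic implies separable is a detail the paper takes for granted in the proof.
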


\begin{proof}
Let $F:\omega^{\omega}\rightarrow X$ be a surjective USCCV multifunction with $X$ a separable metrizable space. Using Proposition \ref{USCCV graphs}, $\mbox{graph}(F)\subseteq \omega^{\omega}\times X$ is a perfect preimage of a Lindel\"of \v{C}ech-complete space inside a separable metrizable space. Then it is a Polish space, since \v{C}ech-completeness is a perfect property \cite{Eng89}. Using Proposition \ref{USCCV graphs} again, $X$ is a continuous image of the Polish space $\mbox{graph}(F)$ and therefore $X$ is analytic.
\end{proof}

\begin{corollary}\label{K is PP}
\emph{\cite{RJ80}} $K$-analytic spaces are projectively $\sigma$-projective, indeed, projectively analytic.
\end{corollary}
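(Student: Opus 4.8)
The plan is to chain together the two facts already in hand: closure of the $K$-analytic class under continuous maps (Proposition \ref{K closure under countable}) and the metrization-type collapse (Proposition \ref{K analytic metrazable}). Fix a $K$-analytic space $X$ and an arbitrary continuous $f\colon X\to\mathds{R}$. First I would observe that $f$, viewed as a map onto its range, is a continuous surjection $X\to f(X)$, where $f(X)$ carries the subspace topology inherited from $\mathds{R}$; hence by Proposition \ref{K closure under countable} the space $f(X)$ is itself $K$-analytic.

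Next, $f(X)\subseteq\mathds{R}$ is metrizable. (It is automatically separable metrizable, but one does not even need to invoke this directly: $K$-analytic spaces are Lindel\"of, so a $K$-analytic metrizable space is Lindel\"of metrizable, hence separable, which is exactly the situation covered by the proof of Proposition \ref{K analytic metrazable}.) Applying Proposition \ref{K analytic metrazable} to the $K$-analytic metrizable space $f(X)$ gives that $f(X)$ is an analytic subset of reals, i.e. $f(X)\in\mathbf{\Sigma}^1_1$. Since $f$ was an arbitrary continuous real-valued map on $X$, this shows $X$ is projectively analytic by Definition \ref{def pp}.

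Finally, since $\mathbf{\Sigma}^1_1\subseteq\mathbf{\Sigma}^1_{\omega_1}=\sigma\text{-}P$ (every analytic set is trivially $\sigma$-projective), being projectively analytic implies being projectively $\sigma$-projective, which completes the argument.

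I do not anticipate a genuine obstacle here; the only point requiring a moment's care is the bookkeeping around "metrizable" versus "separable metrizable" in the hypothesis of Proposition \ref{K analytic metrazable}, which is handled by the Lindel\"of-ness of $K$-analytic spaces, and the trivial observation that the analytic sets are contained in the $\sigma$-projective hierarchy.
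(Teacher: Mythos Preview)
Your proposal is correct and is exactly the argument the paper intends: the corollary is placed immediately after Proposition \ref{K analytic metrazable} with no separate proof, so the intended reasoning is precisely to combine closure of $K$-analytic spaces under continuous images (Proposition \ref{K closure under countable}) with the fact that $K$-analytic metrizable spaces are analytic. Your handling of the separability issue and the trivial inclusion $\mathbf{\Sigma}^1_1\subseteq\sigma\text{-}P$ is fine.
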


As is standard, we can use whatever Polish space is convenient. For our purposes, since we will sometimes want to take a metrizable compactification of a separable metrizable space, it will be convenient to assume our analytic, co-analytic, projective, $\sigma$-projective sets are subsets of $[0,1]$, so their metrizable compactification is just their closure there.

\begin{definition}\label{def s-space}
A Tychonoff space $X$ is an \emph{$s$-space} if there exists a countable family $\mathcal{S}$ of open subsets of $\mu X$, a compactification of $X$, such that $X=\bigcup_{i\in I}\bigcap \mathcal{S}_i$ such that $\mathcal{S}_i\subseteq \mathcal{S}$ for all $i\in I$. 
\end{definition}

\begin{definition}\label{def p-space}
A space is a \emph{Lindel\"of $p$-space} if it is the perfect preimage of a separable metrizable space.
\end{definition}

\begin{definition}\label{def Lindelof Sigma}
A space is a \emph{Lindel\"of $\Sigma$-space} if it is the continuous image of a Lindel\"of $p$-space.
\end{definition}

\begin{definition}\label{def countable type}
A space $X$ has \emph{countable type} if for any compact subspace $K\subseteq X$ there exists $K'$, a compact subspace of $X$, such that $K\subseteq K'$ and there is a countable base for the open sets including $K'$.
\end{definition}

\begin{propo}
\emph{\cite{Arh2013}} Every Lindel\"of $p$-space is a Lindel\"of $s$-space.
\end{propo}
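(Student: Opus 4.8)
The plan is to exhibit a given Lindel\"of $p$-space $X$ as $X=\hat\phi^{-1}(M)$ for a suitable continuous $\hat\phi$ from a compactification of $X$ to a metrizable compactification $\bar M$ of a separable metrizable space $M$, and then to pull back along $\hat\phi$ the trivial ``decomposition'' of $M$ into its singletons. Since in a second countable $T_1$ space every singleton is the intersection of the basic open sets containing it, this will directly produce the countable family $\mathcal S$ required by Definition~\ref{def s-space}.

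First I would fix a perfect surjection $f:X\to M$ with $M$ separable metrizable (one may assume surjectivity, replacing $M$ by the closed set $f(X)$), and a metrizable compactification $\bar M$ of $M$ — for instance the closure of $M$ in a copy of the Hilbert cube. Let $\phi:X\to\bar M$ be $f$ followed by the inclusion $M\hookrightarrow\bar M$, let $\hat\phi:\beta X\to\bar M$ be its continuous extension, and factor $\hat\phi=q\circ\hat f$ where $\hat f:\beta X\to\beta M$ is the Stone--\v{C}ech extension of $f$ and $q:\beta M\to\bar M$ is the canonical map. Then I would argue $X=\hat\phi^{-1}(M)$ in two steps: since $f$ is perfect, Proposition~\ref{SC perfect extension} gives $\hat f(\beta X\setminus X)\subseteq\beta M\setminus M$, hence $\hat f^{-1}(M)=X$; and $q^{-1}(M)=M$, because if $q(p)=m\in M$ then $m$ lies in the $\bar M$-closure of every member of the $z$-ultrafilter $p$, those members are closed in $M$, so $m$ lies in every one of them, forcing $p=m$. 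Composing, $\hat\phi^{-1}(M)=\hat f^{-1}(q^{-1}(M))=X$.

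Next I would fix a countable base $\mathcal B=\{V_k:k\in\omega\}$ of $\bar M$. Since $\bar M$ is compact metrizable, hence $T_1$ and second countable, $\{x\}=\bigcap\{V\in\mathcal B:x\in V\}$ for every $x\in\bar M$, so $M=\bigcup_{x\in M}\bigcap\{V\in\mathcal B:x\in V\}$. Taking $\hat\phi$-preimages, which commute with arbitrary unions and intersections, gives
\[
X=\hat\phi^{-1}(M)=\bigcup_{x\in M}\bigcap\{\,\hat\phi^{-1}(V):V\in\mathcal B,\ x\in V\,\}.
\]
Setting $\mathcal S=\{\hat\phi^{-1}(V):V\in\mathcal B\}$, a countable family of open subsets of the compactification $\beta X$, and $\mathcal S_x=\{\hat\phi^{-1}(V):V\in\mathcal B,\ x\in V\}\subseteq\mathcal S$, this is precisely a witness that $X$ is an $s$-space. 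Finally $X$ is Lindel\"of: $M$ is separable metrizable, hence Lindel\"of, Lindel\"ofness is a perfect property, and $X$ is a perfect preimage of $M$. Hence $X$ is a Lindel\"of $s$-space.

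I expect the one genuinely delicate point to be the identity $X=\hat\phi^{-1}(M)$, and within it the equality $q^{-1}(M)=M$; everything else is a routine manipulation of preimages together with the elementary observation about singletons in second countable $T_1$ spaces. If one prefers to bypass the fact about $q$, an alternative is to take as the witnessing compactification of $X$ the closure of the graph of $\phi$ inside $\beta X\times\bar M$ and run the same pull-back there.
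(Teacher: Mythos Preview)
The paper does not supply its own proof of this proposition; it is simply cited from \cite{Arh2013}. Your argument is correct and stands on its own: the identity $X=\hat\phi^{-1}(M)$ is the only substantive step, and both halves of it---$\hat f^{-1}(M)=X$ via Proposition~\ref{SC perfect extension} and $q^{-1}(M)=M$ via the zero-set/closure argument---are sound. Once that is in place, pulling back a countable base of $\bar M$ along $\hat\phi$ immediately produces the countable family $\mathcal S$ of open subsets of $\beta X$ required by Definition~\ref{def s-space}, and Lindel\"ofness of $X$ is inherited from $M$ along the perfect map.
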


\begin{propo}\label{remainder of s}
\emph{\cite{Arh2013}} A space $X$ is an $s$-space if and only if any (some) remainder of $X$ is a Lindel\"of $\Sigma$-space.
\end{propo}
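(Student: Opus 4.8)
The plan is to first make the statement independent of the choice of compactification, then prove the two implications separately. I would begin by verifying that ``Lindel\"of $\Sigma$'' is a perfect property. It is preserved by continuous (hence perfect) images because a composite of continuous surjections is a continuous surjection. It is preserved by perfect preimages by a pullback argument: if $f\colon S\to T$ is perfect onto $T=h(Z)$ with $h$ continuous and $Z$ a Lindel\"of $p$-space, then the fibered product $W=S\times_T Z\subseteq S\times Z$ equals $(f\times\mathrm{id}_Z)^{-1}(\Gamma_h)$, where $\Gamma_h=\{(h(z),z):z\in Z\}$ is closed in $T\times Z$ (as $T$ is Hausdorff) and $f\times\mathrm{id}_Z$ is perfect; by Proposition \ref{perfect restriction} the restriction $W\to\Gamma_h\cong Z$ is perfect, so $W$ is a perfect preimage of $Z$, hence itself a Lindel\"of $p$-space, and $S=\pi_S(W)$ is its continuous image. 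By Proposition \ref{Perfect property lemma} it follows that whether the remainder is Lindel\"of $\Sigma$ is independent of the compactification; and being an $s$-space is already compactification-free, since the definition only posits \emph{some} $\mu X$. So it suffices to prove: $X$ is an $s$-space $\iff$ the remainder of $X$ in \emph{some} compactification is a Lindel\"of $\Sigma$-space.

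\textbf{The implication ``$s$-space $\Rightarrow$ Lindel\"of $\Sigma$ remainder''.} Let $\mu X$ and $\mathcal S=\{S_n:n\in\omega\}$ witness that $X$ is an $s$-space, say $X=\bigcup_{i\in I}\bigcap\mathcal S_i$. Close $\mathcal S$ under finite intersections (still countable) and let $\{H_k:k\in\omega\}$ be the complements $\mu X\setminus S$, a countable family of \emph{closed}, hence compact, subsets of $\mu X$, closed under finite intersections; then $Y:=\mu X\setminus X$ is a combination of members of $\{H_k\}$, with $Y=\bigcap_{i\in I}\bigcup\{\mu X\setminus S:S\in\mathcal S_i\}$. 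For $y\in Y$ put $K_y:=\bigcap\{H_k:y\in H_k\}$, a compact subset of $\mu X$. Because $y\notin\bigcap\mathcal S_i$ for every $i$, for each $i$ some member of $\{H_k\}$ containing $y$ is disjoint from the corresponding piece of $X$; this forces $K_y\subseteq Y$, so $\{K_y:y\in Y\}$ is a cover of $Y$ by compact subspaces of $Y$. Moreover $\{H_k:y\in H_k\}=\{H_k:K_y\subseteq H_k\}$, whence $K_y=\bigcap\{H_k:K_y\subseteq H_k\}$; compactness of $K_y$ together with closure of $\{H_k\}$ under finite intersections then shows that $\{H_k\cap Y:k\in\omega\}$ is a countable network modulo the cover $\{K_y\}$. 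By the standard network characterization of Lindel\"of $\Sigma$-spaces (Nagami), $Y$ is Lindel\"of $\Sigma$.

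\textbf{The converse.} Now suppose $Y=\mu X\setminus X$ is Lindel\"of $\Sigma$. Write $X=O\sqcup R$, where $O=\mu X\setminus\overline Y^{\mu X}$ is the interior of $X$ in $\mu X$ and $R=\overline Y^{\mu X}\setminus Y$ is the remainder of $Y$ in the compactification $\overline Y^{\mu X}$. The open set $O$ is handled by the one-element family $\{O\}$, and it is disjoint from $\overline Y^{\mu X}$; so (using that open subsets of $\overline Y^{\mu X}$ are traces of open subsets of $\mu X$, and that $\mu X=O\sqcup Y\sqcup R$) a routine check shows that if $\{V_j\cap\overline Y^{\mu X}:j\in\omega\}$ witnesses that $R$ is an $s$-space in $\overline Y^{\mu X}$, then $\{O\}\cup\{V_j:j\in\omega\}$ witnesses that $X$ is an $s$-space in $\mu X$. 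Thus it suffices to prove that $R$, the remainder of a Lindel\"of $\Sigma$-space, is an $s$-space. For this I would present $Y$ as a USCCV image $T(M)$ of a separable metrizable $M$ (the USCCV form of ``Lindel\"of $\Sigma$''), embed $M$ in a metrizable compactification $cM$ with countable base $\mathcal B$, and use Theorem \ref{USCCV SC-comp} to extend $T$ to a USCCV multifunction $\hat T\colon cM\to cY$ onto a compactification $cY$ of $Y$. The countable family $\{\hat T^{-1}(\bigcap\mathcal B'):\mathcal B'\subseteq\mathcal B\text{ finite}\}$ of open subsets of $cY$, traced on $R$, should witness the $s$-space property: each point of $cM\setminus M$ is a $G_\delta$ (since $cM$ is metrizable), so the corresponding fibre of $\hat T$ is an intersection of members of this family, and $R$ should be the union of such fibres — \emph{provided} $\hat T$ carries $cM\setminus M$ off of $Y$, which Theorem \ref{USCCV SC-comp} guarantees when $T$ is a \emph{perfect} USCCV multifunction onto its image.

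\textbf{Expected obstacle.} The crux is exactly that last proviso: a general Lindel\"of $\Sigma$-space is only a \emph{continuous} (equivalently USCCV) image, not a \emph{perfect} USCCV image, of a separable metrizable space, so $\hat T(cM\setminus M)$ may meet $Y$ and the reconstructed intersections may leak out of $R$. The same difficulty appears if one tries the naive converse $\mathcal S=\{\mu X\setminus H_k\}$ for a network $\{H_k\cap Y\}$ modulo a compact cover of $Y$: the ``saturations'' $\bigcap\{H_k:K\subseteq H_k\}$ of compact network pieces $K\subseteq Y$ can meet $X$, hence fail to separate $X$ from $Y$. Overcoming this requires choosing the presentation with care — passing to a suitable perfect ``$p$-core'' of $Y$ whose remainder-image is exactly $R$, rather than working with $Y$ itself — and this is where I expect the genuine content of the proof (as carried out in \cite{Arh2013}) to reside.
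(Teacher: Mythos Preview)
The paper does not give its own proof of this proposition; it is simply quoted from \cite{Arh2013} as a known result, so there is nothing to compare your argument against.

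On the merits of your attempt: the forward direction is essentially the standard network argument and is correct in outline, but there is a small slip. You close $\mathcal S$ under finite \emph{intersections}; that makes the complements $H_k$ closed under finite \emph{unions}, which is the wrong closure for the compactness step. What you need is that $\{H_k:K_y\subseteq H_k\}$ be directed downward, so that from $\bigcap\{H_k:K_y\subseteq H_k\}\subseteq U$ and compactness of $\mu X\setminus U$ you can extract a single $H_k$ with $K_y\subseteq H_k\subseteq U$. For that, close $\mathcal S$ under finite \emph{unions} (or under both operations); then the $H_k$ are closed under finite intersections and the argument goes through.

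The converse is, as you yourself note, incomplete: you correctly identify that Theorem \ref{USCCV SC-comp} only controls $\hat T(cM\setminus M)$ when $T$ is a \emph{perfect} USCCV multifunction, and a general Lindel\"of $\Sigma$-space need not admit such a presentation. Your suggested fix (pass to a ``perfect $p$-core'') is vague and not obviously available. This direction genuinely requires the machinery in \cite{Arh2013}; it is not a routine corollary of the tools assembled in the present paper, which is presumably why the authors cite it rather than prove it.
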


\begin{propo}\label{perfect s is p}
\emph{\cite{Arh2013}} Every perfect $s$-space is a $p$-space.
\end{propo}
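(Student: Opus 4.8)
The plan is to produce, directly from the data, a \emph{feathering} of $X$ in a compactification, which by Arhangel'ski\u{\i}'s feathering characterization of $p$-spaces (see \cite{Arh2013}) is equivalent to $X$ being a $p$-space. Recall that a feathering of $X$ in a compactification $\mu X$ is a sequence $\langle\gamma_n:n\in\omega\rangle$ of families of open subsets of $\mu X$, each covering $X$, such that $\bigcap_{n}\mathrm{St}(x,\gamma_n)\subseteq X$ for every $x\in X$, where $\mathrm{St}(x,\gamma_n)=\bigcup\{U\in\gamma_n:x\in U\}$. We build the feathering in the very compactification $\mu X$ witnessing that $X$ is an $s$-space, using the countable family $\mathcal{S}=\{S_n:n\in\omega\}$ of open subsets of $\mu X$ with $X=\bigcup_{i\in I}\bigcap\mathcal{S}_i$ and $\mathcal{S}_i\subseteq\mathcal{S}$.

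First I would extract the pointwise consequence of the $s$-space structure: for every $x\in X$, $\bigcap\{S\in\mathcal{S}:x\in S\}\subseteq X$. Indeed, $x\in\bigcap\mathcal{S}_{i_0}$ for some $i_0\in I$, so $\mathcal{S}_{i_0}\subseteq\{S\in\mathcal{S}:x\in S\}$ and hence $\bigcap\{S\in\mathcal{S}:x\in S\}\subseteq\bigcap\mathcal{S}_{i_0}\subseteq X$. So it suffices to arrange a (countable) feathering in which, for each $x$ and each $S_n\ni x$, \emph{some} member of the feathering has its star at $x$ contained in $S_n$.

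Here perfectness enters. For each $n$ the trace $S_n\cap X$ is open in $X$, hence, since $X$ is perfect, it is $F_\sigma$ in $X$: write $S_n\cap X=\bigcup_{k\in\omega}C_{n,k}$ with each $C_{n,k}$ closed in $X$. Since $C_{n,k}$ is closed in $X$, its closure $\overline{C_{n,k}}$ in $\mu X$ satisfies $\overline{C_{n,k}}\cap X=C_{n,k}\subseteq S_n$. Put $\gamma_{n,k}=\{\,S_n,\ \mu X\setminus\overline{C_{n,k}}\,\}$, a pair of open subsets of $\mu X$. It covers $X$: if $x\in X\setminus S_n$ then $x\notin C_{n,k}$, and as $\overline{C_{n,k}}\cap X=C_{n,k}$ we get $x\notin\overline{C_{n,k}}$, so $x\in\mu X\setminus\overline{C_{n,k}}$. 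Also, for $x\in C_{n,k}$ the only member of $\gamma_{n,k}$ containing $x$ is $S_n$, so $\mathrm{St}(x,\gamma_{n,k})=S_n$.

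Finally I would check that $\{\gamma_{n,k}:(n,k)\in\omega\times\omega\}$ is a countable feathering of $X$ in $\mu X$: given $x\in X$ and any $n$ with $x\in S_n$, choose $k$ with $x\in C_{n,k}$; then $\mathrm{St}(x,\gamma_{n,k})=S_n$, so $\bigcap_{(m,j)}\mathrm{St}(x,\gamma_{m,j})\subseteq S_n$. Since this holds for every $S_n\ni x$, $\bigcap_{(m,j)}\mathrm{St}(x,\gamma_{m,j})\subseteq\bigcap\{S\in\mathcal{S}:x\in S\}\subseteq X$, so $X$ is a $p$-space. The only genuine obstacle is recognizing that perfectness is exactly what allows the ``complementary'' piece $\mu X\setminus\overline{C_{n,k}}$ to simultaneously cover $X\setminus C_{n,k}$ and avoid $C_{n,k}$ — this would fail with $S_n\cap X$ in place of $C_{n,k}$, since $S_n\cap X$ need not be closed in $X$ — together with the bookkeeping that keeps the feathering countable; invoking the feathering characterization then spares us from constructing a perfect map onto a metrizable space by hand.
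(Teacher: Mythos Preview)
The paper does not prove this proposition; it is simply quoted from \cite{Arh2013}, so there is no in-paper argument to compare your attempt against.

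Your argument is sound and isolates exactly why perfectness is needed: writing $S_n\cap X=\bigcup_k C_{n,k}$ with each $C_{n,k}$ closed in $X$ is precisely what lets the two-element family $\gamma_{n,k}=\{S_n,\ \mu X\setminus\overline{C_{n,k}}\}$ cover $X$ while forcing $\mathrm{St}(x,\gamma_{n,k})=S_n$ whenever $x\in C_{n,k}$; the pointwise reduction $\bigcap\{S\in\mathcal{S}:x\in S\}\subseteq X$ then finishes it. One caution worth recording: the classical definition of a feathering demands $\bigcap_n\overline{\mathrm{St}(x,\gamma_n)}\subseteq X$ with closures taken in $\mu X$, not the closure-free version you state. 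From your families the relevant closed stars at $x\in C_{n,k}$ are $\overline{S_n}$, and $\bigcap\{\overline{S_n}:x\in S_n\}\subseteq X$ is \emph{not} immediate from the $s$-space hypothesis alone. You cover yourself by citing \cite{Arh2013} for the characterization, and Arhangel'ski\u{\i} does work with formulations that make the closure-free condition sufficient; just be sure the exact version you invoke is the one stated there, or add a line noting the equivalence of the two formulations for Tychonoff spaces, since otherwise a reader checking against the standard definition will see an apparent gap.
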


\begin{propo}\label{remainder of countable type}
\emph{\cite{HI58}} A space is Lindel\"of if and only if its Stone-\v{C}ech remainder is of countable type.
\end{propo}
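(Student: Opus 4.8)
The plan is to prove the two implications separately: the forward implication, $X$ Lindel\"of $\Rightarrow X^{\ast}$ of countable type, is elementary and uses only the normality of $\beta X$, while the reverse implication, $X^{\ast}$ of countable type $\Rightarrow X$ Lindel\"of, is the substantive half and is the content of \cite{HI58}; I would obtain it by reducing Lindel\"ofness of $X$ to a statement about its cozero covers, transferring everything to $\beta X$, and running a compactness argument there.

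\emph{Forward implication.} Assume $X$ is Lindel\"of and fix a compact $K\subseteq X^{\ast}=\beta X\setminus X$. Since $\beta X$ is compact Hausdorff, hence normal, for each $x\in X$ choose a continuous $g_x\colon\beta X\to[0,1]$ with $g_x(x)=1$ and $g_x\restrict K\equiv 0$, and set $U_x=\{g_x>\tfrac12\}$. The family $\{U_x\cap X:x\in X\}$ covers $X$, so Lindel\"ofness gives a countable subcover $\{U_{x_n}\cap X:n<\omega\}$. Let $q=\sum_{n}2^{-n-1}g_{x_n}$, a continuous nonnegative function on $\beta X$, and put $K'=q^{-1}(0)=\bigcap_{n}g_{x_n}^{-1}(0)$: this is closed in $\beta X$, hence compact, it contains $K$ (as each $g_{x_n}$ vanishes on $K$), and it is contained in $X^{\ast}$ (since any $x\in X$ lies in some $U_{x_n}$, forcing $q(x)>0$). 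Setting $O_n=\{q<\tfrac1n\}$ we get open sets of $\beta X$ with $K'\subseteq O_n$, $\overline{O_{n+1}}\subseteq O_n$, and $\bigcap_n O_n=K'$. Then $\{O_n\cap X^{\ast}:n<\omega\}$ is a countable outer base for $K'$ in $X^{\ast}$: if $W=W'\cap X^{\ast}$ is open in $X^{\ast}$ with $K'\subseteq W$, then $K'\subseteq W'$, and since $K'=\bigcap_n\overline{O_n}$ is a decreasing intersection of compact subsets of the open set $W'$ in the compact space $\beta X$, some $\overline{O_n}\subseteq W'$, whence $O_n\cap X^{\ast}\subseteq W$. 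Thus $X^{\ast}$ is of countable type.

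\emph{Reverse implication.} Assume $X^{\ast}$ is of countable type; it suffices to show every cover of $X$ by cozero sets has a countable subcover. Let $\{V_\alpha\}$ be such a cover, extend each $V_\alpha$ to a cozero set $G_\alpha$ of $\beta X$ with $G_\alpha\cap X=V_\alpha$, put $G=\bigcup_\alpha G_\alpha\supseteq X$, and let $K=\beta X\setminus G$, a compact subset of $X^{\ast}$. By countable type, pick a compact $K'\supseteq K$ in $X^{\ast}$ with a countable outer base in $X^{\ast}$; by regularity of $\beta X$ one can fix open sets $W'_n$ of $\beta X$ with $K'\subseteq W'_n$, $\overline{W'_{n+1}}\subseteq W'_n$, and $\bigcap_n(W'_n\cap X^{\ast})=K'$. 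Then $Z^{\ast}=\bigcap_n\overline{W'_n}=\bigcap_n W'_n$ is closed (hence compact) with $Z^{\ast}\cap X^{\ast}=K'$, so $Z^{\ast}\cap X=Z^{\ast}\setminus K'$ is open in the compact space $Z^{\ast}$. For each $n$, $\beta X\setminus W'_n$ is compact and disjoint from $K'\supseteq K$, hence contained in $G=\bigcup_\alpha G_\alpha$ and covered by finitely many $G_\alpha$, say those indexed by a finite set $B_n$; then $\{V_\alpha:\alpha\in\bigcup_n B_n\}$ covers $X\setminus Z^{\ast}$. What remains is to cover the residual set $Z^{\ast}\cap X$ by countably many $V_\alpha$, and this is where \cite{HI58} does the real work: one shows $Z^{\ast}\cap X$ is itself Lindel\"of, so the cover $\{V_\alpha\cap(Z^{\ast}\cap X)\}$ has a countable subcover indexed by some countable $B'$, and $\{V_\alpha:\alpha\in B'\cup\bigcup_n B_n\}$ is then the desired countable subcover of $X$.

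I expect the main obstacle to be precisely this last step. The difficulty is that a compact set $K'$ which has a countable outer base \emph{within} $X^{\ast}$ need not be a $G_\delta$ of $\beta X$, so a priori the closures $\overline{W'_n}$ (and hence $Z^{\ast}$) can meet $X$ in an uncontrolled way; one must show either directly that the residual part $Z^{\ast}\cap X$ is Lindel\"of --- e.g.\ $\sigma$-compact, exploiting that it is an open subset of the compact space $Z^{\ast}$ whose complement $K'$ has a countable outer base --- or that the construction can be iterated so as to exhaust it. This is the heart of the Henriksen--Isbell argument.
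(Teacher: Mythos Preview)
The paper does not prove this proposition; it merely cites \cite{HI58}. Your forward implication is correct and complete: given compact $K\subseteq X^{\ast}$, you produce a compact zero-set $K'$ of $\beta X$ with $K\subseteq K'\subseteq X^{\ast}$, and such a $K'$ has countable character already in $\beta X$, hence in $X^{\ast}$.

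The reverse implication, however, is \emph{false} as stated, so the obstacle you flagged is not a missing lemma but an insuperable one. Let $D$ be any uncountable discrete space: $D$ is locally compact, so $D^{\ast}$ is closed in $\beta D$ and hence compact; and every compact space is trivially of countable type under Definition~\ref{def countable type} (for any compact $K$ take $K'$ to be the whole space, whose only neighbourhood is itself). Thus $D^{\ast}$ is of countable type while $D$ is not Lindel\"of. In your setup the residual $Z^{\ast}\cap X$ can genuinely fail to be Lindel\"of, and no iteration will repair this. What Henriksen and Isbell actually proved has the two properties interchanged: $X$ is of countable type if and only if $X^{\ast}$ is Lindel\"of. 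The paper's only use of the proposition (Example~\ref{Okunev space}) infers that a certain remainder is not Lindel\"of from the space failing to be of countable type---precisely the correct Henriksen--Isbell theorem applied directly---so the proposition appears to be misstated, and your instinct that the reverse direction concealed a real difficulty was well founded.
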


\section{Properties of $\pmb{\mathscr{K}}$}\label{Section U and C}

The main theorem of the next section will show that, under $\sigma$-PD, every Menger space in $\pmb{\mathscr{K}}$ is Hurewicz. In this section, we present several properties of $\pmb{\mathscr{K}}$. We start with unions, products and intersections.

\begin{theorem}\label{U and C properties I}
Given $\xi\in \omega_1$:
\begin{enumerate}
 
 \item $\pmb{\mathscr{K}}(\mathbf{\Pi}^1_{\xi})\cup \pmb{\mathscr{K}}(\mathbf{\Sigma}^1_{\xi})\subseteq\pmb{\mathscr{K}}(\mathbf{\Sigma}^1_{\xi+1})\cap \pmb{\mathscr{K}}(\mathbf{\Pi}^1_{\xi+1})$.
 
 \item The classes $\pmb{\mathscr{K}}(\mathbf{\Pi}^1_{\xi+1})$ are closed under countable unions, countable intersections.
 
 \item $\pmb{\mathscr{K}}(\mathbf{\Sigma}^1_{\xi
 +1})$ is closed under countable unions, countable intersections and countable products.

 \item For $\gamma\in \omega_1$ a limit ordinal,  $\pmb{\mathscr{K}}(\mathbf{\Sigma}^1_{\gamma})$ is closed under countable unions, finite intersections and finite products.
 
 \item For $\gamma\in \omega_1$ a limit ordinal, $\pmb{\mathscr{K}}(\mathbf{\Pi}^1_{\gamma})$  is closed under finite unions, countable intersections and countable products.
 
 \item Both $\pmb{\mathscr{K}}(\mathbf{\Pi}^1_{\xi})$ and $\pmb{\mathscr{K}}(\mathbf{\Sigma}^1_{\xi})$ are closed under USCCV multifunctions.
\end{enumerate}
\end{theorem}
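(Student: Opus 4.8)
The plan is to deduce all six items from two ingredients: the corresponding closure properties of the $\sigma$-projective \emph{sets of reals}, and a short list of lemmas transferring closure properties from a class $\Gamma$ to $\pmb{\mathscr{K}}(\Gamma)$.

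I would first record the set-level facts, for $\xi<\omega_1$ and $\gamma<\omega_1$ a limit ordinal: $\mathbf{\Sigma}^1_{\xi}\cup\mathbf{\Pi}^1_{\xi}\subseteq\mathbf{\Sigma}^1_{\xi+1}\cap\mathbf{\Pi}^1_{\xi+1}$; both $\mathbf{\Sigma}^1_{\xi+1}$ and $\mathbf{\Pi}^1_{\xi+1}$ are closed under countable unions, countable intersections, countable products, and closed subsets; $\mathbf{\Sigma}^1_{\gamma}$ is closed under countable unions (by definition), finite intersections, finite products, and closed subsets; and $\mathbf{\Pi}^1_{\gamma}$ is closed under finite unions, countable intersections, countable products, and closed subsets. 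The first containment is proved exactly as in the finite projective hierarchy (\cite{Kec95}); the products and intersections are handled by writing $\prod_n A_n=\bigcap_n\pi_n^{-1}(A_n)$ and invoking closure under continuous preimages together with (countable or finite) intersections, the restriction to \emph{finite} at a limit level $\gamma$ coming from the fact that one can only combine boundedly many of the approximating levels $\xi_i<\gamma$ at once. I would also note that the topological sum $\bigoplus_n A_n$ of sets $A_n\subseteq\mathds{R}$ is homeomorphic to $\bigcup_n(A_n\times\{n\})\subseteq\mathds{R}^2\cong\mathds{R}$, which again lies in the relevant pointclass provided that pointclass is closed under the appropriate (countable or finite) unions; this is the source of the asymmetry between ``countable'' and ``finite'' unions in items (4) and (5).

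Next I would prove the transfer lemmas for an arbitrary class $\Gamma$ with the relevant closure properties. (a) Since the composition of USCCV multifunctions is USCCV (Proposition \ref{Whyburn results}), $\pmb{\mathscr{K}}(\pmb{\mathscr{K}}(\Gamma))=\pmb{\mathscr{K}}(\Gamma)$ and $\pmb{\mathscr{K}}$ is monotone; this gives (6) outright, and together with the first set-level containment it gives (1). (b) If $\Gamma$ is closed under closed subspaces, so is $\pmb{\mathscr{K}}(\Gamma)$: writing a member $Y$ of $\pmb{\mathscr{K}}(\Gamma)$ as a CIPP of $P\in\Gamma$ via a perfect surjection $f:Z\rightarrow P$ and a continuous surjection $g:Z\rightarrow Y$ (Corollary \ref{CIPP and USCCV}), for closed $C\subseteq Y$ the set $g^{-1}(C)$ is closed in $Z$, so $f\restrict g^{-1}(C)$ is a perfect map onto $f(g^{-1}(C))$, which is closed in $P$ and hence in $\Gamma$ (Proposition \ref{perfect restriction}), while $g\restrict g^{-1}(C)$ maps onto $C$; thus $C$ is a CIPP of an element of $\Gamma$. (c) If $\Gamma$ is closed under countable (resp.\ finite) products, so is $\pmb{\mathscr{K}}(\Gamma)$, directly from Proposition \ref{powerfully USCCV}. (d) If the topological sum of countably (resp.\ finitely) many members of $\Gamma$ is again in $\Gamma$, then a countable (resp.\ finite) union $X=\bigcup_n X_n$ of $\pmb{\mathscr{K}}(\Gamma)$-subspaces of a Tychonoff space lies in $\pmb{\mathscr{K}}(\Gamma)$: splice the witnessing USCCV multifunctions $F_n:P_n\rightarrow X$ (with $F_n(P_n)=X_n$ and $P_n\in\Gamma$) into one USCCV multifunction on $\bigoplus_n P_n$ with image $X$. (e) Combining (b) and (c) with the closed diagonal embedding $\bigcap_n X_n\cong\{(x,x,\ldots):x\in\bigcap_n X_n\}\subseteq\prod_n X_n$ (closed since the ambient space is Hausdorff), a countable (resp.\ finite) intersection of $\pmb{\mathscr{K}}(\Gamma)$-subspaces of a Hausdorff space lies in $\pmb{\mathscr{K}}(\Gamma)$ whenever $\Gamma$ is closed under countable (resp.\ finite) products and closed subspaces.

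Finally I would assemble the pieces: (2) is (d) and (e) with $\Gamma=\mathbf{\Pi}^1_{\xi+1}$; (3) is (d), (e), (c) with $\Gamma=\mathbf{\Sigma}^1_{\xi+1}$; (4) is (d) with finite sums, (e) with finite products, and (c) with finite products, for $\Gamma=\mathbf{\Sigma}^1_{\gamma}$; (5) is (d) with finite sums, (e) with countable products, and (c) with countable products, for $\Gamma=\mathbf{\Pi}^1_{\gamma}$. I expect lemma (b), the closed-subspace transfer, to be the main obstacle, since it is the only step that genuinely uses the behaviour of perfect maps rather than formal manipulation of USCCV multifunctions; the one further delicate point is the level bookkeeping at limit stages that forces ``finite'' in exactly the right places.
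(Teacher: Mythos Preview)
Your proposal is correct and is precisely a fleshed-out version of the paper's two-line proof, which simply appeals to ``the classic results of the $\sigma$-projective hierarchy and Proposition~\ref{powerfully USCCV}'' for (1)--(5) and to closure of USCCV under composition for (6); your transfer lemmas (a)--(e) are exactly what is being left implicit there, and your closed-hereditary lemma (b) reappears in the paper as a separate statement (Lemma~5.6) with a mildly different argument via compactifications. One small slip in your final assembly: in item (4) you want (d) with \emph{countable} sums, not finite, since $\mathbf{\Sigma}^1_{\gamma}$ is by definition closed under countable unions and the theorem asserts closure of $\pmb{\mathscr{K}}(\mathbf{\Sigma}^1_{\gamma})$ under countable unions.
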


\begin{proof}
Points 1) to 5) follow from the classic results of the $\sigma$-projective hierarchy and Proposition \ref{powerfully USCCV}.

Point 6) follows from their respective definitions and the fact that the composition of USCCV multifunctions is USCCV.
\end{proof}

\begin{corollary}
Every space in $\pmb{\mathscr{K}}$ is powerfully Lindel\"of.
\end{corollary}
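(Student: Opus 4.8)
The plan is to reduce to the base class and then invoke the preservation result already in hand. By definition $\pmb{\mathscr{K}}=\pmb{\mathscr{K}}(\sigma\text{-}P)$, so every $X\in\pmb{\mathscr{K}}$ is a surjective USCCV image of some $\sigma$-projective set $A$ of reals. Corollary \ref{USCCV powerfully  lin} says that USCCV multifunctions preserve powerful Lindel\"ofness, so the whole statement follows once we check that each member of $\sigma\text{-}P$ is powerfully Lindel\"of.

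So I would fix a $\sigma$-projective set $A$. Under the running convention, $A$ is a subspace of a fixed Polish space (we may take $A\subseteq[0,1]^{\omega}$), hence $A$ is separable and metrizable. Then $A^{\omega}$ is a countable product of separable metrizable spaces, so it is again separable metrizable, in particular second countable, and every second countable space is Lindel\"of. Therefore $A^{\omega}$ is Lindel\"of, i.e. $A$ is powerfully Lindel\"of.

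Putting the two steps together: given $X\in\pmb{\mathscr{K}}$, choose a $\sigma$-projective set $A$ and a surjective USCCV multifunction $F:A\to X$; since $A$ is powerfully Lindel\"of and USCCV multifunctions preserve this property by Corollary \ref{USCCV powerfully  lin}, $X$ is powerfully Lindel\"of.

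I do not anticipate a genuine obstacle here: the argument is a two-line combination of a base-case observation (members of $\sigma\text{-}P$ are separable metrizable, hence so are their countable powers) with the already-proved fact that powerful Lindel\"ofness is inherited by USCCV images. The only point to be careful about is that ``powerfully Lindel\"of'' must be read as a topological property of the space itself, so that Corollary \ref{USCCV powerfully  lin} applies verbatim, and that the base class really does consist of separable metrizable spaces, which is immediate from the convention that $\sigma$-projective sets live inside a fixed Polish space.
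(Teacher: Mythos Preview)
Your proof is correct and is essentially the argument the paper has in mind: the corollary is placed immediately after Theorem~\ref{U and C properties I}, whose closure-under-countable-products clause is itself proved via Proposition~\ref{powerfully USCCV}, so unwinding that gives exactly your two-step argument (base class is separable metrizable, hence powerfully Lindel\"of; then apply Corollary~\ref{USCCV powerfully  lin}).
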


\begin{corollary}
For every $\beta\in\omega_1$, $\pmb{\mathscr{K}}(\mathbf{\Sigma}^1_{\beta+1})=\pmb{\mathscr{K}}(\mathbf{\Pi}^1_{\beta})$.
\end{corollary}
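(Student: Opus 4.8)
The plan is to prove both inclusions separately, using the basic structural facts about the $\sigma$-projective hierarchy together with Whyburn's composition result that a USCCV image of a USCCV image is again a USCCV image.

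First I would establish $\pmb{\mathscr{K}}(\mathbf{\Pi}^1_{\beta})\subseteq\pmb{\mathscr{K}}(\mathbf{\Sigma}^1_{\beta+1})$. This follows immediately from point 1) of Theorem \ref{U and C properties I}, which already asserts $\pmb{\mathscr{K}}(\mathbf{\Pi}^1_{\beta})\subseteq\pmb{\mathscr{K}}(\mathbf{\Sigma}^1_{\beta+1})$; alternatively, one notes directly that every $\mathbf{\Pi}^1_\beta$ set is (trivially, via the identity, which is continuous hence USCCV) a continuous image of itself, and since $\mathbf{\Pi}^1_\beta\subseteq\mathbf{\Sigma}^1_{\beta+1}$ by definition of the hierarchy, any USCCV image of a $\mathbf{\Pi}^1_\beta$ set is a USCCV image of a $\mathbf{\Sigma}^1_{\beta+1}$ set.

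Second, and this is the substantive direction, I would prove $\pmb{\mathscr{K}}(\mathbf{\Sigma}^1_{\beta+1})\subseteq\pmb{\mathscr{K}}(\mathbf{\Pi}^1_{\beta})$. Let $X\in\pmb{\mathscr{K}}(\mathbf{\Sigma}^1_{\beta+1})$, so $X$ is a USCCV image of some $A\in\mathbf{\Sigma}^1_{\beta+1}$. By the definition of the hierarchy, $A$ is a continuous image of some $B\in\mathbf{\Pi}^1_\beta$; that is, there is a continuous surjection $h:B\to A$, and every continuous function is USCCV. Composing the USCCV multifunction $F:A\to X$ with $h$, Proposition \ref{Whyburn results} point 1) (composition of USCCV multifunctions is USCCV) gives that $X$ is a USCCV image of $B\in\mathbf{\Pi}^1_\beta$, i.e. $X\in\pmb{\mathscr{K}}(\mathbf{\Pi}^1_\beta)$.

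The main obstacle, if any, is simply making sure the definitions line up: that $\mathbf{\Sigma}^1_{\beta+1}$ sets are exactly continuous images of $\mathbf{\Pi}^1_\beta$ sets (which is how Definition of the $\sigma$-projective hierarchy is stated in the excerpt), and that "continuous image" there can be taken as surjective onto the $\mathbf{\Sigma}^1_{\beta+1}$ set in question. Everything else is bookkeeping with the closure of $\pmb{\mathscr{K}}(\cdot)$ under USCCV composition. One should also observe that both inclusions are genuinely needed for equality and that the argument is uniform in $\beta<\omega_1$, requiring no determinacy hypothesis.
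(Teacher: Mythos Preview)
Your proof is correct and follows essentially the same approach as the paper: one inclusion from $\mathbf{\Pi}^1_{\beta}\subseteq \mathbf{\Sigma}^1_{\beta+1}$, and the other from the fact that every $\mathbf{\Sigma}^1_{\beta+1}$ set is by definition a continuous image of a $\mathbf{\Pi}^1_{\beta}$ set, composed with the given USCCV multifunction. The paper's proof is terser but invokes exactly these two facts.
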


\begin{proof}
Both results come from the facts that $\mathbf{\Pi}^1_{\beta}\subseteq \mathbf{\Sigma}^1_{\beta+1}$ and that, by definition, every member in $\mathbf{\Sigma}^1_{\beta+1}$ is the continuous image of a member in $\mathbf{\Pi}^1_{\beta}$.
\end{proof}

In light of the above result, we will only refer to the levels of $\pmb{\mathscr{K}}$ as $\pmb{\mathscr{K}}(\mathbf{\Sigma}^1_{\beta})$, with $\beta\in \omega_1$. The choice of $\mathbf{\Sigma}$ over $\mathbf{\Pi}$ is not arbitrary, since the $\pmb{\mathscr{K}}(\mathbf{\Pi}^1_{\beta})$ classes do not encompass $\pmb{\mathscr{K}}(\mathbf{\Sigma}^1_{\gamma})$ with $\gamma$ a limit ordinal.

\begin{corollary}\label{U perfect property}
For every $\beta\in\omega_1$, being a\, $\pmb{\mathscr{K}}(\mathbf{\Sigma}^1_{\beta})$ ($\pmb{\mathscr{K}}(\mathbf{\Pi}^1_{\beta})$) space is a perfect property.
\end{corollary}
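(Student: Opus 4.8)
The plan is to show that being a $\pmb{\mathscr{K}}(\mathbf{\Sigma}^1_{\beta})$ (resp. $\pmb{\mathscr{K}}(\mathbf{\Pi}^1_{\beta})$) space is preserved both by perfect images and by perfect preimages; that is exactly what it means to be a perfect property. One direction is essentially immediate from the definitions and the machinery already set up in the excerpt: a perfect map is in particular continuous and in particular a USCCV multifunction, so a perfect image of a space in $\pmb{\mathscr{K}}(\Gamma)$ is again a USCCV image of an element of $\Gamma$ (using that the composition of USCCV multifunctions is USCCV, Proposition \ref{Whyburn results}(1)), hence again in $\pmb{\mathscr{K}}(\Gamma)$. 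In fact this argument also follows directly from Theorem \ref{U and C properties I}(6), which says the classes $\pmb{\mathscr{K}}(\mathbf{\Sigma}^1_{\xi})$ and $\pmb{\mathscr{K}}(\mathbf{\Pi}^1_{\xi})$ are closed under USCCV multifunctions.

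For the harder direction I would argue that a perfect preimage of a space in $\pmb{\mathscr{K}}(\Gamma)$ is again in $\pmb{\mathscr{K}}(\Gamma)$. Suppose $Y \in \pmb{\mathscr{K}}(\Gamma)$, witnessed by a surjective USCCV multifunction $G : Z \to Y$ with $Z \in \Gamma$, and suppose $g : W \to Y$ is a perfect surjection. By Corollary \ref{CIPP and USCCV}, $Y$ being a USCCV image of $Z$ means $Y$ is a CIPP of $Z$: there is a space $Z'$, a perfect surjection $p : Z' \to Z$, and a continuous surjection $q : Z' \to Y$. Now form the pullback (fibered product) $W \times_Y Z' = \{(w, z') : g(w) = q(z')\}$. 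The projection $W \times_Y Z' \to Z'$ is a perfect map (perfectness is preserved under pullback: it is the restriction over $Z'$ of the perfect map $g \times \mathrm{id}$, using Proposition \ref{perfect restriction}), and composing with $p : Z' \to Z$ gives a perfect map $W \times_Y Z' \to Z$, exhibiting $W \times_Y Z'$ as a perfect preimage of $Z$. Meanwhile the other projection $W \times_Y Z' \to W$ is onto (since $q$ is onto) and continuous, so $W$ is a continuous image of $W \times_Y Z'$. Thus $W$ is a CIPP of $Z \in \Gamma$, hence (Corollary \ref{CIPP and USCCV} again) a USCCV image of $Z$, so $W \in \pmb{\mathscr{K}}(\Gamma)$.

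Alternatively, and perhaps more cleanly, one can avoid pullbacks entirely by working with multifunctions directly: given the USCCV $G : Z \to Y$ and the perfect surjection $g : W \to Y$, the outer inverse $g^{-1} : Y \to W$ is a perfect USCCV multifunction by Proposition \ref{Whyburn results}(2), so the composite $g^{-1} \circ G : Z \to W$ is USCCV by Proposition \ref{Whyburn results}(1), and it is surjective since $g$ is; hence $W$ is a USCCV image of $Z \in \Gamma$, i.e. $W \in \pmb{\mathscr{K}}(\Gamma)$. The main point to be careful about — the step I expect to require the most attention — is checking surjectivity and that the composite genuinely lands in $W$ (not just that each value is a nonempty compact subset of $W$); this is routine but deserves a sentence. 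Both halves together give that $\pmb{\mathscr{K}}(\mathbf{\Sigma}^1_{\beta})$ and $\pmb{\mathscr{K}}(\mathbf{\Pi}^1_{\beta})$ are perfect properties for every $\beta \in \omega_1$.
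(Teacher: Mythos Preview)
Your proposal is correct, and your ``alternative'' approach is precisely the paper's proof: the paper simply observes that if $f$ is perfect then both $f$ and $f^{-1}$ are USCCV multifunctions, and then invokes Theorem \ref{U and C properties I}(6) (closure of $\pmb{\mathscr{K}}(\mathbf{\Sigma}^1_{\beta})$ and $\pmb{\mathscr{K}}(\mathbf{\Pi}^1_{\beta})$ under USCCV images) to handle both directions at once. Your pullback construction is also valid but is more work than needed here; the multifunction route you flagged as ``perhaps more cleanly'' is exactly the intended shortcut, and the surjectivity concern you raise is harmless since $g^{-1}(y)$ is nonempty for every $y\in Y$ and $G$ is surjective onto $Y$.
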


\begin{proof}
Recall that if $f:A\rightarrow B$ is a perfect function, then both $f$ and $f^{-1}$ are USCCV multifunctions. By point (6) of Theorem \ref{U and C properties I}, every $\pmb{\mathscr{K}}(\mathbf{\Sigma}^1_{\beta})$ ($\pmb{\mathscr{K}}(\mathbf{\Pi}^1_{\beta})$) class is closed under USCCV images.
\end{proof}

It follows immediately that

\begin{corollary}
    Being in $\pmb{\mathscr{K}}$ is a perfect property.
\end{corollary}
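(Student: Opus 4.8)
The plan is to derive this straight from Corollary~\ref{U perfect property}. First I would record that $\pmb{\mathscr{K}}=\bigcup_{\beta<\omega_1}\pmb{\mathscr{K}}(\mathbf{\Sigma}^1_{\beta})$: the inclusion $\supseteq$ is clear since $\mathbf{\Sigma}^1_{\beta}\subseteq\sigma\text{-} P$ for each $\beta<\omega_1$, and for $\subseteq$ one uses that every $\sigma$-projective set is assembled from countably many sets of bounded projective rank, so (by regularity of $\omega_1$, together with the fact that the $\mathbf{\Sigma}^1_{\beta}$ form an increasing hierarchy closed under countable unions at both successor and limit stages) every $\sigma$-projective set already lies in some $\mathbf{\Sigma}^1_{\beta}$ with $\beta<\omega_1$; hence any USCCV image of it lies in $\pmb{\mathscr{K}}(\mathbf{\Sigma}^1_{\beta})$.

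Next I would observe that an arbitrary union of perfect properties is again a perfect property: if a space $X$ belongs to some $\pmb{\mathscr{K}}(\mathbf{\Sigma}^1_{\beta})$ and $f\colon X\to Y$ is a perfect surjection, or $g\colon Y\to X$ is a perfect surjection, then Corollary~\ref{U perfect property} puts $Y$ back into $\pmb{\mathscr{K}}(\mathbf{\Sigma}^1_{\beta})\subseteq\pmb{\mathscr{K}}$. That already yields the corollary.

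Alternatively, and avoiding even the bookkeeping about levels, one can simply re-run the proof of Corollary~\ref{U perfect property} at the level of $\pmb{\mathscr{K}}=\pmb{\mathscr{K}}(\sigma\text{-} P)$ itself: a perfect map is a single-valued USCCV multifunction, its outer inverse is USCCV by Proposition~\ref{Whyburn results}(2), and a USCCV image of a USCCV image of a $\sigma$-projective set is again a USCCV image of that set by Proposition~\ref{Whyburn results}(1); hence $\pmb{\mathscr{K}}$ is closed under both perfect images and perfect preimages. I do not anticipate any real obstacle here — the only point meriting a moment's attention is the identity $\pmb{\mathscr{K}}=\bigcup_{\beta<\omega_1}\pmb{\mathscr{K}}(\mathbf{\Sigma}^1_{\beta})$ needed for the first route, which is precisely why the second route is the cleaner one to write down.
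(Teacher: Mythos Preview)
Your proposal is correct and matches the paper's approach: the paper records this corollary with no proof beyond ``It follows immediately that'' after Corollary~\ref{U perfect property}, which is exactly your first route (union of the levels, each a perfect property). Your second route---rerunning the USCCV argument directly at the level of $\pmb{\mathscr{K}}(\sigma\text{-}P)$---is equally valid and indeed avoids the small bookkeeping about $\pmb{\mathscr{K}}=\bigcup_{\beta<\omega_1}\pmb{\mathscr{K}}(\mathbf{\Sigma}^1_{\beta})$, so either write-up would be fine.
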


\begin{lemma}
For every $\beta\in\omega_1$, $\pmb{\mathscr{K}}(\mathbf{\Sigma}^1_{\beta})$ is closed hereditary.
\end{lemma}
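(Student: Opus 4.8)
The plan is to show that if $X \in \pmb{\mathscr{K}}(\mathbf{\Sigma}^1_\beta)$ and $Y \subseteq X$ is closed, then $Y \in \pmb{\mathscr{K}}(\mathbf{\Sigma}^1_\beta)$. By Corollary \ref{CIPP and USCCV}, write $X$ as a CIPP of a $\mathbf{\Sigma}^1_\beta$ set $P$: there is a space $Z$, a perfect surjection $f : Z \to P$, and a continuous surjection $g : Z \to X$. The natural move is to pull $Y$ back along $g$. Set $Z' = g^{-1}(Y)$, which is closed in $Z$ since $g$ is continuous, and consider the restrictions $f \restrict Z' : Z' \to f(Z')$ and $g \restrict Z' : Z' \to Y$. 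The restriction $g \restrict Z'$ is a continuous surjection onto $Y$ by construction. For $f \restrict Z'$, I would first restrict the codomain appropriately: since $f$ is perfect and $Z'$ is closed in $Z$, the map $f \restrict Z' : Z' \to f(Z')$ is perfect (a closed subspace maps perfectly onto its image — $f \restrict Z'$ is the composition of the closed embedding $Z' \hookrightarrow Z$ with $f$, and a perfect map restricted to a closed subspace of the domain, with codomain cut down to the image, is perfect; this is standard, cf. the discussion around Proposition \ref{perfect restriction}).

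The remaining point is that $f(Z')$ is again a $\mathbf{\Sigma}^1_\beta$ set. Here $f(Z')$ is a closed subset of $P$: indeed $f$ is a closed map, so $f(Z')$ is closed in $P$ (as $P = f(Z)$). Since $P$ is a $\mathbf{\Sigma}^1_\beta$ subset of a Polish space and closed subsets of $P$ are intersections of $P$ with closed — hence Borel, hence $\mathbf{\Sigma}^1_\beta$ (for $\beta \geq 1$) — subsets of the ambient Polish space, and $\mathbf{\Sigma}^1_\beta$ is closed under finite (indeed countable) intersections, $f(Z')$ is $\mathbf{\Sigma}^1_\beta$. For $\beta = 0$, where $\mathbf{\Sigma}^1_0$ should be read as the Borel sets, the same argument works since closed sets are Borel and Borel sets are closed under intersection; if the intended base level is $\mathbf{\Sigma}^1_1$ the argument is identical. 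Thus $Y$ is a continuous image of the perfect preimage $Z'$ of the $\mathbf{\Sigma}^1_\beta$ set $f(Z')$, i.e., $Y$ is a CIPP of a $\mathbf{\Sigma}^1_\beta$ set, so $Y \in \pmb{\mathscr{K}}(\mathbf{\Sigma}^1_\beta)$ by Corollary \ref{CIPP and USCCV} again.

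The main obstacle, such as it is, lies in the bookkeeping about restrictions of perfect maps: one must be careful that $f \restrict Z' : Z' \to f(Z')$ is perfect \emph{onto its image} (not merely that it is perfect as a map into $P$, which is also true but weaker and insufficient — we need the image to be a CIPP-witness target that is itself $\mathbf{\Sigma}^1_\beta$), and that the image $f(Z')$ is genuinely closed in $P$ so that the descriptive-complexity computation goes through. Both facts are routine consequences of $f$ being a closed map with compact fibres (closedness gives $f(Z')$ closed; the fibres of $f \restrict Z'$ are closed subspaces of the compact fibres of $f$, hence compact; and $f \restrict Z'$ sends sets closed in $Z'$ to sets closed in $f(Z')$ because they are closed in $Z$, hence have closed image in $P$, hence closed image in $f(Z')$). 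Alternatively, and perhaps more cleanly, one can phrase the whole argument multifunction-theoretically: $Y \hookrightarrow X$ is a closed embedding, hence a perfect map, hence USCCV, and composing the USCCV surjection witnessing $X \in \pmb{\mathscr{K}}(\mathbf{\Sigma}^1_\beta)$ with... no — one wants to go the other way. The direct CIPP argument above is the most transparent, and I would present it in that form.
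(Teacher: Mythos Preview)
Your proof is correct. It differs from the paper's in presentation: you use the CIPP characterization (Corollary~\ref{CIPP and USCCV}) and work with the intermediate space $Z$, pulling $Y$ back along the continuous factor $g$ and then pushing forward along the perfect factor $f$ to land on the closed subset $f(Z') \subseteq P$. The paper instead works directly in the USCCV language: given a surjective USCCV multifunction $F:A\to B$ with $A\in\mathbf{\Sigma}^1_\beta$ and closed $C\subseteq B$, it observes that $F^{-1}(C)$ is closed in $A$ (hence $\mathbf{\Sigma}^1_\beta$) and that $G:F^{-1}(C)\to C$, $G(a)=F(a)\cap C$, is again a surjective USCCV multifunction. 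Your $f(Z')$ is exactly the paper's $F^{-1}(C)$ once one identifies $Z$ with $\mathrm{graph}(F)$ and $f,g$ with the two projections, so the two arguments are the same skeleton viewed through the two sides of Corollary~\ref{CIPP and USCCV}. The multifunction route you began to sketch and then abandoned at the end is precisely what the paper does; it is a bit shorter, at the cost of having to check that $a\mapsto F(a)\cap C$ is USCCV (which the paper handles by viewing $F$ as valued in $\beta B$ and intersecting with the compact closure of $C$ there). Your CIPP version trades that verification for the standard bookkeeping about restrictions of perfect maps, which you carry out carefully.
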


\begin{proof}
Assume that $B\in \pmb{\mathscr{K}}(\mathbf{\Sigma}^1_{\beta})$ and that $F:A\rightarrow B$ is a surjective USCCV multifunction with $A\in\mathbf{\Sigma}^1_{\beta}$. Also, let $C\subseteq B$ a closed set. On the one hand, we know that $F^{-1}(C)$ is closed in $A$ and, as shown by classical descriptive set theory, that implies that $F^{-1}(C)\in\mathbf{\Sigma}^1_{\beta}$. On the other hand, we can see $F$ as a function with co-domain $\beta B$. With this point of view, if we let $D$ be the (compact) closure of $C$ in $\beta B$, we have that $D\cap B=C$ and that, for every $a\in F^{-1}(C)$, $F(a)\cap D=F(a)\cap C$ is compact. Then the function $G: F^{-1}(C)\rightarrow C$ defined as $G(a)=F(a)\,\cap\, C$ is a USCCV surjective multifunction. This shows that $C\in \pmb{\mathscr{K}}(\mathbf{\Sigma}^1_{\beta})$.
\end{proof}

Now we can show that $\pmb{\mathscr{K}}$ is projectively $\sigma$-projective.

\begin{theorem}\label{U and C pp}
Let $Z\in \pmb{\mathscr{K}}$ and let $f: Z\rightarrow Y\subseteq \mathds{R}$ be a surjective continuous function. Then $Y$ is a $\sigma$-projective set.
\end{theorem}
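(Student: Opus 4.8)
The plan is to push $Y$ back to a separable metrizable USCCV image of a fixed $\sigma$-projective subset of $[0,1]$, realise that image through a graph sitting inside $[0,1]^2$, and then use upper semicontinuity to exhibit the graph as the trace of a Borel set on a $\sigma$-projective set. This is the exact analogue, one level up, of the proof that a metrizable $K$-analytic space is analytic (Proposition \ref{K analytic metrazable}). To set things up: since $Z\in\pmb{\mathscr{K}}=\pmb{\mathscr{K}}(\sigma\text{-}P)$, fix a $\sigma$-projective set $A$, which we may take to be a subset of $[0,1]$, together with a surjective USCCV multifunction $\Phi:A\to Z$; choose $\beta<\omega_1$ with $A\in\mathbf{\Sigma}^1_\beta$. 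Since a single-valued continuous function is USCCV and USCCV multifunctions compose to USCCV multifunctions (Proposition \ref{Whyburn results}), the composite $G:=f\circ\Phi:A\to Y$ is again a surjective USCCV multifunction (equivalently, by Corollary \ref{CIPP and USCCV}, $Y$ is a CIPP of $A$). As $Y\subseteq\mathds{R}$ is separable metrizable, the theorem is reduced to the claim that a separable metrizable USCCV image of a $\mathbf{\Sigma}^1_\beta$ set is $\mathbf{\Sigma}^1_\beta$.

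The main step, and the place where care is needed, is a closedness observation about the graph. Apply Proposition \ref{USCCV graphs} to $G:A\to Y$: writing $P:=\operatorname{graph}(G)\subseteq A\times Y\subseteq[0,1]\times[0,1]$, the coordinate projection $\pi_2:P\to Y$ is continuous and surjective, while $\pi_1:P\to A$ is perfect (this perfectness is the structural reason the next point works). I claim $P$ is closed \emph{in $A\times[0,1]$} — not merely in $A\times Y$. Indeed, if $(x_0,y_0)\in(A\times[0,1])\setminus P$, then $y_0\notin G(x_0)$; since $G(x_0)$ is compact and $[0,1]$ is Hausdorff, pick disjoint open sets $U\ni y_0$ and $V\supseteq G(x_0)$ in $[0,1]$. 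Then $V\cap Y$ is open in $Y$ and contains $G(x_0)$, so by upper semicontinuity $W:=\{x\in A:G(x)\subseteq V\cap Y\}$ is open in $A$ and contains $x_0$; and $(W\times U)\cap(A\times[0,1])$ is a neighbourhood of $(x_0,y_0)$ missing $P$, because $(x,y)\in W\times U$ with $y\in G(x)$ would force $y\in G(x)\subseteq V$, contradicting $y\in U$ with $U\cap V=\emptyset$. Hence, setting $C:=\overline{P}$ (closure in $[0,1]^2$), we get $P=C\cap(A\times[0,1])$. (This identity can also be read off from the perfectness of $\pi_1$, cf.\ Proposition \ref{SC perfect extension}.)

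Assembling the conclusion is then routine. The set $C$ is closed in $[0,1]^2$, hence Borel, hence $\mathbf{\Sigma}^1_\beta$; and $A\times[0,1]$ is a $\mathbf{\Sigma}^1_\beta$ subset of $[0,1]^2$ by the classical fact that the product of a $\mathbf{\Sigma}^1_\beta$ set with a Polish space stays $\mathbf{\Sigma}^1_\beta$ (see \cite{Kec95}, the same proof working through $\omega_1$). Since the classes $\mathbf{\Sigma}^1_\beta$ are closed under finite intersections — the classical counterpart of Theorem \ref{U and C properties I} — we obtain $P=C\cap(A\times[0,1])\in\mathbf{\Sigma}^1_\beta$. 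Finally $\mathbf{\Sigma}^1_\beta$ is closed under continuous images and $Y=\pi_2(P)$, so $Y\in\mathbf{\Sigma}^1_\beta$; in particular $Y$ is a $\sigma$-projective set. The only genuinely non-routine ingredient is the closedness of $P$ in $A\times[0,1]$, which is exactly where compact-valuedness and upper semicontinuity are used; everything after it is bookkeeping with the standard closure properties of the $\sigma$-projective hierarchy, and everything before it is a one-line composition of multifunctions.
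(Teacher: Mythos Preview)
Your proof is correct and shares its overall architecture with the paper's: compose to obtain a surjective USCCV multifunction $G$ from a $\sigma$-projective $A\subseteq[0,1]$ onto $Y$, argue that $\operatorname{graph}(G)\subseteq[0,1]^2$ is $\sigma$-projective, and then project. The difference lies in how the graph is shown to be $\sigma$-projective. The paper invokes the extension machinery of Theorem~\ref{USCCV SC-comp} to produce a USCCV $\hat F:\overline{X}\to\overline{Y}$ whose graph is compact (hence Polish), and then recovers $\operatorname{graph}(F)$ as $\pi_{\overline{X}}^{-1}(X)$, a continuous preimage of a $\mathbf{\Pi}^1_\gamma$ set inside a Polish space. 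You instead verify directly, from compact-valuedness plus upper semicontinuity, that $\operatorname{graph}(G)$ is closed in $A\times[0,1]$, whence it equals $C\cap(A\times[0,1])$ for some closed $C\subseteq[0,1]^2$. Your route is more elementary---it bypasses Theorem~\ref{USCCV SC-comp} entirely---and incidentally gives the sharper level bound $Y\in\mathbf{\Sigma}^1_\beta$ (so you prove Corollary~\ref{crl:separable metric in U} simultaneously), whereas the paper's argument lands at $\mathbf{\Sigma}^1_{\gamma+1}$ but reuses machinery already in place. One cosmetic point: you tacitly move $Y$ from $\mathds{R}$ into $[0,1]$ when writing $P\subseteq[0,1]\times[0,1]$; the paper makes the same ``without loss of generality'' reduction explicitly.
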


\begin{proof}
Assume that $Z\in \pmb{\mathscr{K}}(\mathbf{\Pi}^1_{\gamma})$. Let $X$ be a $\mathbf{\Pi}^1_{\gamma}$ set and $G: X\rightarrow Z$ a USCCV surjective multifunction. Let $F=f\circ G$. Notice that $F: X\rightarrow Y$ is a USCCV surjective multifunction.

Without loss of generality, assume $X, Y\subseteq [0,1]$. This implies that $\overline{X}$ and $\overline{Y}$ are metric compactification of $X$ and $Y$. By Theorem \ref{USCCV SC-comp}, we can extend $F$ to a surjective multifunction $\hat{F}:\overline{X}\rightarrow \overline{Y}$.

Using Proposition \ref{USCCV graphs}, notice that $\mbox{graph}(\hat{F})\subseteq [0,1]^2$ is a separable metrizable space. Furthermore,  $\mbox{graph}(\hat{F})$ maps perfectly onto $\overline{X}$ and maps continuously onto $\overline{Y}$. By Theorem \ref{U and C properties I} point 6), $\pi_{\overline{X}}^{-1}(X)$ is $\mathbf{\Pi}^1_{\gamma}$. This means that $\pi_{\overline{Y}}(\pi_{\overline{X}}^{-1}(X))=Y$ is $\mathbf{\Sigma}^1_{\gamma+1}$.

The result for $\mathbf{\Sigma}^1_{\gamma+1}$ follows immediately from the $\mathbf{\Pi}^1_{\gamma}$ case and the proof for $\mathbf{\Sigma}^1_{\gamma}$ when $\gamma$ is a limit follows from the $\mathbf{\Sigma}^1_{\delta+1}$ results.
\end{proof}

\begin{corollary}\label{crl:separable metric in U}
If $Y\in \pmb{\mathscr{K}}(\mathbf{\Sigma}^1_{\gamma})$ is a metrizable space then $Y\in \mathbf{\Sigma}^1_{\gamma}$. 
\end{corollary}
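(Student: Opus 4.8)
The plan is to reduce the metrizable case to the statement of Theorem \ref{U and C pp} by exhibiting a continuous real-valued (or $[0,1]^{\omega}$-valued) function whose image is all of $Y$, and then invoking the structure of the $\sigma$-projective hierarchy to descend from an a priori $\mathbf{\Sigma}^1_{\gamma}$-type bound to membership in $\mathbf{\Sigma}^1_{\gamma}$ exactly. First I would observe that a metrizable space $Y$ in $\pmb{\mathscr{K}}$ is Lindel\"of (Corollary \ref{USCCV lin}), hence separable metrizable, so without loss of generality $Y\subseteq [0,1]^{\omega}$; it suffices to handle $Y\subseteq[0,1]$ by the standard ``whatever Polish space is convenient'' remark, or to run the coordinate-projection argument below coordinatewise.

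Next I would run essentially the proof of Theorem \ref{U and C pp} but keeping track of the level. Suppose $Y\in\pmb{\mathscr{K}}(\mathbf{\Sigma}^1_{\gamma})$. If $\gamma=\beta+1$ is a successor, write $Y$ as a USCCV image of some $X\in\mathbf{\Pi}^1_{\beta}$ via $F\colon X\to Y$ (using the corollary $\pmb{\mathscr{K}}(\mathbf{\Sigma}^1_{\beta+1})=\pmb{\mathscr{K}}(\mathbf{\Pi}^1_{\beta})$). Take $X\subseteq[0,1]$, $Y\subseteq[0,1]$, extend $F$ to $\hat F\colon\overline X\to\overline Y$ by Theorem \ref{USCCV SC-comp}, and form $\mathrm{graph}(\hat F)\subseteq[0,1]^2$, which is separable metrizable and maps perfectly onto $\overline X$ and continuously onto $\overline Y$. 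By Theorem \ref{U and C properties I}(6) (or Corollary \ref{U perfect property}), $\pi_{\overline X}^{-1}(X)\in\mathbf{\Pi}^1_{\beta}$, and its continuous image under $\pi_{\overline Y}$ is $Y$; hence $Y\in\mathbf{\Sigma}^1_{\beta+1}=\mathbf{\Sigma}^1_{\gamma}$, which is exactly the claim. If $\gamma$ is a limit, $Y\in\pmb{\mathscr{K}}(\mathbf{\Sigma}^1_{\gamma})$ means (by Theorem \ref{U and C properties I}(4) and the definition of the $\sigma$-projective hierarchy at limits) that $Y=\bigcup_{i<\omega}Y_i$ with each $Y_i\in\pmb{\mathscr{K}}(\mathbf{\Sigma}^1_{\delta_i})$, $\delta_i<\gamma$; each closed-in-$Y$ piece is again in $\pmb{\mathscr{K}}(\mathbf{\Sigma}^1_{\delta_i})$ by the closed-hereditary lemma, so by the successor case (applied after bumping $\delta_i$ up by one if it is itself a limit) each $Y_i\in\mathbf{\Sigma}^1_{\delta_i}$, and a countable union of these is $\mathbf{\Sigma}^1_{\gamma}$.

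The one point needing care — and the likely main obstacle — is making sure the bookkeeping of the hierarchy levels does not lose a level: the generic ``graph + perfect preimage + continuous image'' move naturally lands $Y$ in $\mathbf{\Sigma}^1_{\beta+1}$ when $Y\in\pmb{\mathscr{K}}(\mathbf{\Pi}^1_\beta)$, and one must check that this is the \emph{intended} index, i.e.\ that we have not implicitly used $\pmb{\mathscr{K}}(\mathbf{\Sigma}^1_{\gamma})$ with $\gamma$ decomposed the ``wrong'' way. This is handled cleanly by the identity $\pmb{\mathscr{K}}(\mathbf{\Sigma}^1_{\beta+1})=\pmb{\mathscr{K}}(\mathbf{\Pi}^1_\beta)$ already proved, together with the fact that the $\sigma$-projective hierarchy satisfies $\mathbf{\Pi}^1_\beta\subseteq\mathbf{\Sigma}^1_{\beta+1}$; at limits one invokes the explicit union decomposition. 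Everything else is routine: separability of the metrizable Lindel\"of space, embeddability in $[0,1]$ or $[0,1]^{\omega}$, and the already-established closure and extension properties of $\pmb{\mathscr{K}}(\mathbf{\Sigma}^1_\beta)$.
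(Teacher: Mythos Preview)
Your argument is correct and follows the paper's (first) proof essentially verbatim: observe that $Y$ is Lindel\"of hence separable metrizable, embed $Y$ in $[0,1]^{\omega}$, and rerun the graph-of-$\hat F$ argument from Theorem~\ref{U and C pp} with $Y$ itself as the target, keeping track of the level via $\pmb{\mathscr{K}}(\mathbf{\Sigma}^1_{\beta+1})=\pmb{\mathscr{K}}(\mathbf{\Pi}^1_{\beta})$ and the union decomposition at limits. One small redundancy in your limit step: the pieces $Y_i=F(A_i)$ lie in $\pmb{\mathscr{K}}(\mathbf{\Sigma}^1_{\delta_i})$ simply because the restriction of a USCCV multifunction to any subspace of its domain is again USCCV, so there is no need for the $Y_i$ to be closed in $Y$ and the closed-hereditary lemma plays no role here. (The paper also supplies a second, independent proof later, via Theorem~\ref{RJ generalized}, which you do not use.)
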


\begin{proof}
$Y$ is separable because Lindel\"of metrizable spaces are separable. The proof is analogous to Theorem \ref{U and C pp} but assuming that $Y\subseteq [0,1]^{\omega}$. (Separable metrizable spaces need not embed in $[0, 1]$ but do embed in $[0, 1]^{\omega}$.)
\end{proof}

If one does not like pretending that $[0, 1]$ and $[0, 1]^{\omega}$ are the same, one can use the fact (see e.g. \cite{Kec95}) that every separable metrizable space is a perfect image of a subspace of the Cantor space and hence of $\mathds{R}$. (Incidentally, in \cite{Tall20} the second author followed \cite{RJ80} a little too closely, unnecessarily privileging $\mathds{R}^{\omega}$ over $\mathds{R}$ in the table of equivalents on page 11.) 

\begin{corollary}\label{proper hier I}
For every $\alpha<\beta \in \omega_1$, there is a space $B$ such that $B$ is $\pmb{\mathscr{K}}(\mathbf{\Sigma}^1_{\beta})$ but is not $\pmb{\mathscr{K}}(\mathbf{\Sigma}^1_{\alpha})$.
\end{corollary}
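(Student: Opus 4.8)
The plan is to reduce the statement about the class $\pmb{\mathscr{K}}(\mathbf{\Sigma}^1_{\beta})$ to the already-known strictness of the $\sigma$-projective hierarchy of sets of reals, using Corollary \ref{crl:separable metric in U} as the bridge. Recall from classical descriptive set theory (and its straightforward transfinite extension through $\omega_1$, see \cite{Kec95}, \cite{AMS21}) that for every $\alpha<\beta<\omega_1$ there is a set of reals $B\subseteq[0,1]$ (or $\subseteq[0,1]^\omega$) with $B\in\mathbf{\Sigma}^1_\beta$ but $B\notin\mathbf{\Sigma}^1_\alpha$; indeed one can take a universal (or complete) $\mathbf{\Sigma}^1_\beta$ set, or, when $\beta=\alpha+1$, a $\mathbf{\Sigma}^1_{\alpha+1}$-complete set, and at limit stages use a suitable countable union of completes cofinal below $\beta$. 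So fix such a $B$.

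First I would observe that $B$, being a $\mathbf{\Sigma}^1_\beta$ subset of reals, trivially lies in $\pmb{\mathscr{K}}(\mathbf{\Sigma}^1_\beta)$, since the identity map is a (single-valued) USCCV multifunction and every $\mathbf{\Sigma}^1_\beta$ set is a USCCV image of itself. Next, suppose toward a contradiction that $B\in\pmb{\mathscr{K}}(\mathbf{\Sigma}^1_\alpha)$. Since $B$ is separable metrizable, Corollary \ref{crl:separable metric in U} applies and yields $B\in\mathbf{\Sigma}^1_\alpha$, contradicting the choice of $B$. (If one wishes to avoid the $[0,1]$ versus $[0,1]^\omega$ issue, one uses the version of Corollary \ref{crl:separable metric in U} stated there for $Y\subseteq[0,1]^\omega$, which covers every separable metrizable space.) Hence $B\in\pmb{\mathscr{K}}(\mathbf{\Sigma}^1_\beta)\setminus\pmb{\mathscr{K}}(\mathbf{\Sigma}^1_\alpha)$, as required.

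The only real content, then, is the input fact that the $\sigma$-projective hierarchy on sets of reals does not collapse at any countable level — i.e. that $\mathbf{\Sigma}^1_\alpha\subsetneq\mathbf{\Sigma}^1_\beta$ for $\alpha<\beta<\omega_1$. This is where I expect the main (and essentially only) obstacle to lie: for the finite projective levels it is the classical hierarchy theorem, but to push it through all countable $\beta$, and in particular through limit stages, one needs the standard construction of universal sets at each level together with a diagonalization argument, and at limit ordinals $\gamma$ one must check that $\mathbf{\Sigma}^1_\gamma$ (countable unions of lower levels) genuinely properly contains each $\mathbf{\Sigma}^1_\xi$, $\xi<\gamma$ — this follows because a single $\mathbf{\Sigma}^1_\xi$-complete set cannot be $\mathbf{\Sigma}^1_\eta$ for $\eta<\xi$. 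I would cite \cite{Kec95} for the projective case and \cite{AMS21} (or fold in a one-line diagonalization) for the extension through $\omega_1$, rather than reproving it. Everything else is bookkeeping already done in the section above.
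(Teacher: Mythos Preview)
Your argument is correct and is essentially identical to the paper's own proof: pick $B\in\mathbf{\Sigma}^1_\beta\setminus\mathbf{\Sigma}^1_\alpha$ using the known strictness of the $\sigma$-projective hierarchy, note $B\in\pmb{\mathscr{K}}(\mathbf{\Sigma}^1_\beta)$ trivially, and invoke Corollary~\ref{crl:separable metric in U} to rule out $B\in\pmb{\mathscr{K}}(\mathbf{\Sigma}^1_\alpha)$. The paper simply cites \cite{MoschDST} for the hierarchy fact rather than \cite{Kec95} and \cite{AMS21}, but otherwise the proofs coincide.
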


\begin{proof}
It is well-known that the $\sigma$-projective sets form a hierarchy of length $\omega_1$ \cite{MoschDST}. Let $B\in \mathbf{\Sigma}^1_{\beta}\setminus \mathbf{\Sigma}^1_{\alpha}$. We know that $B$ is $\pmb{\mathscr{K}}(\mathbf{\Sigma}^1_{\xi})$. Since $B\notin \mathbf{\Sigma}^1_{\alpha}$, by \ref{crl:separable metric in U} $B\notin \pmb{\mathscr{K}}(\mathbf{\Sigma}^1_{\alpha})$.
\end{proof}

Although we presented a simpler proof for Proposition \ref{K analytic metrazable}, the technique presented in Rogers and Jayne's Theorem 5.5.1 of \cite{RJ80} gives other characterizations of metrizability for $K$-analytic spaces. These can be generalized to the whole $\pmb{\mathscr{K}}$ hierarchy. Analyzing their proof carefully, it can be generalized to prove the following assertion:

\begin{theorem}\label{RJ generalized}
Given a class of topological spaces $\Gamma$ and a pair of topological properties $P$ and $Q$ such that 
\begin{enumerate}
 \item Every member of $\Gamma$ with property $Q$ is a USCCV image of a space with property $P$.
 \item For $X\in \Gamma$ with property $Q$, $X\times X$ is Lindel\"of and $X$ has a $G_{\delta}$ diagonal.
 \item $\Gamma$, property $Q$ and property $P$ are closed-hereditary.
 \item Every countable product of spaces with property $P$ has property $P$.
 \item Finite unions of spaces with property $P$ have property $P$.
\end{enumerate}
Then every member of $\Gamma$ with property $Q$ is a continuous image of a space with property $P$.
\end{theorem}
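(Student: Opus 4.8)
The plan is to follow Rogers and Jayne's proof of their Theorem 5.5.1 (the route that lies behind Proposition \ref{K analytic metrazable} in the metrizable case), keeping careful track of where each of the five hypotheses is used. First, by hypothesis (1), fix a space $Z$ with property $P$ and a surjective USCCV multifunction $F\colon Z\to X$. By Proposition \ref{USCCV graphs}, $\operatorname{graph}(F)\subseteq Z\times X$ is a perfect preimage of $Z$ and its projection $\pi_X$ maps it continuously onto $X$; moreover $\operatorname{graph}(F)$ is closed in $Z\times X$, since $F$ is compact-valued and $X$ is Hausdorff (separate a point outside a value from that compact value and apply $F^{-1}_{in}$). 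The task thus reduces to producing a space with property $P$ that maps continuously onto $X$; note that the bare fact that $\operatorname{graph}(F)$ is a perfect preimage of $Z$ does not suffice, because $P$ is not assumed to be a perfect property.

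Second, I would convert hypothesis (2) into a combinatorial tool. As $X\times X$ is Lindel\"of, so is $X$; together with complete regularity and the $G_\delta$-diagonal this yields a sequence $(\mathcal G_n)_{n\in\omega}$ of \emph{countable} cozero covers of $X$ forming a $G_\delta$-diagonal sequence and, after the standard shrinking $\{g>0\}\rightsquigarrow\{g>1/m\}$, having the stronger property that for every $(k_n)$ the intersection $\bigcap_n\overline{G^n_{k_n}}$ of the closures of the chosen members contains at most one point. (In particular $X$ is submetrizable, the abstract shadow of ``$X$ metrizable'' in Proposition \ref{K analytic metrazable}.) Since each $\overline{G^n_{k_n}}$ is a zero set, $F^{-1}\bigl(\overline{G^n_{k_n}}\bigr)$, and any finite intersection of such outer inverses, is closed in $Z$.

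Third, I would run the Rogers--Jayne ``complete sequence of covers'' construction. Inside a space assembled from $Z$ purely by the operations under which $P$ is closed --- a closed subspace of a countable product of finite unions of copies of $Z$, laid out along the countable index sets of the $\mathcal G_n$ --- one carves out, by the closed conditions above, the set $W$ of data consisting of a point $z\in Z$ together with a compatible sequence of cover-indices $(k_n)$ for which $F(z)\cap\bigcap_n\overline{G^n_{k_n}}\neq\emptyset$; compactness of each $F(z)$ makes such data exist and reduces the defining condition to finitely many closures at a time, so $W$ is closed in the ambient space and hence has property $P$ by hypotheses (3), (4), (5). The map sending each element of $W$ to the unique point of $F(z)\cap\bigcap_n\overline{G^n_{k_n}}$ is single-valued (by the at-most-one-point property), onto $X$ (by surjectivity of $F$ and the covering property), and continuous: given a basic open $O$ around an image point $x$, compactness of $F(z)$ supplies a finite sub-intersection $C$ of the closures with $F(z)\cap C\subseteq O$, whence $F(z)\subseteq O\cup(X\setminus C)$, and upper semicontinuity of $F$ spreads this to a neighbourhood of the given element of $W$. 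Hence $X$ is a continuous image of the $P$-space $W$.

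\emph{Main obstacle.} The delicate step is the third one: arranging $W$ so that it is \emph{simultaneously} (i) a closed subspace of a space built from copies of $Z$ by countable products and finite unions alone --- so that property $P$ really does transfer via (3)--(5), with no appeal to $P$ being perfect or to any fixed ancillary space (such as $\mathds{P}$ or the Hilbert cube) having property $P$; (ii) large enough for the selection map to be onto $X$; and (iii) small enough for that map to be single-valued and continuous. Reconciling (i)--(iii) is the core of Rogers and Jayne's argument and is where the $G_\delta$-diagonal sequence from hypothesis (2) must be balanced against the compactness of the fibres $F(z)$; adapting it is delicate precisely because \cite{RJ80} works with $P=$ ``Polish'', for which the index spaces of the construction are automatically available, whereas here they must be reassembled from $Z$ using only (3)--(5).
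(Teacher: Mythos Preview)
Your overall strategy --- separate points via data derived from the $G_\delta$ diagonal, then observe that the resulting USCCV map becomes single-valued --- is the same as the paper's. But the construction of $W$, which you yourself flag as the main obstacle, is not actually carried out, and the hint you give (``a closed subspace of a countable product of finite unions of copies of $Z$, laid out along the countable index sets of the $\mathcal G_n$'') cannot work as stated: each cover $\mathcal G_n$ has countably many members, so the index space for $(k_n)$ is essentially $\omega^\omega$, and there is no reason it (or any countable discrete space) has property $P$. You use hypothesis (1) exactly once, to get a single $F\colon Z\to X$, and then try to encode the cover-indices alongside $Z$; that encoding is where countable unions, not finite ones, would be needed.

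The paper sidesteps this by applying hypothesis (1) not once but countably many times, to carefully chosen closed pieces of $X$. From Lindel\"ofness of $(X\times X)\setminus\triangle$ (an $F_\sigma$ in Lindel\"of $X\times X$), write $(X\times X)\setminus\triangle=\bigcup_{i\in\omega}G_i^0\times G_i^1$ with $G_i^0\cap G_i^1=\emptyset$, and set $A_i^j=X\setminus G_i^j$. Each $A_i^j$ is closed, so by hypothesis (3) it lies in $\Gamma$ and has $Q$; now hypothesis (1) gives a USCCV surjection $L_i^j\colon B_i^j\to A_i^j$ with $B_i^j$ having $P$. Since $A_i^0\cup A_i^1=X$, only a \emph{binary} choice is needed at level $i$, and the ambient space is $\prod_{i\in\omega}\bigl(B_i^0\sqcup B_i^1\bigr)$ --- a countable product of two-element unions, exactly matching hypotheses (4) and (5). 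The product multifunction into $X^\omega$, restricted to the (closed) preimage of the diagonal copy of $X$, is single-valued because for $x\neq y$ the coordinate $i_{\langle x,y\rangle}$ forces $x$ and $y$ into different $A_i^j$'s. The missing idea in your proposal is precisely this: trade one USCCV map onto $X$ for countably many onto closed pieces, arranged so that each level needs only a finite (in fact binary) union.
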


\begin{proof}
Let $P$, $Q$ and $\Gamma$ be as described in the Theorem. Furthermore, let $X\in \Gamma$ have property $Q$. We shall use $\triangle$ to denote the diagonal of $X$, $\triangle = \{\langle x, x\rangle\,:\,x \in X\}$. Then since $X\times X$ is Lindel\"of and $X$ has a $G_{\delta}$ diagonal, $(X\times X)\setminus \triangle$ is an $F_{\sigma}$ and, therefore, also Lindel\"of. 

As a consequence of $(X\times X)\setminus \triangle$ being Lindel\"of, there exist open $G_i^{0}, G_i^{1}\subseteq X$, for $i\in \omega$, such that  $(X\times X)\setminus \triangle=\bigcup_{i\in\omega}G_i^{0}\times G_i^{1}$. For each $\langle j,i\rangle \in (\{0, 1\}\times\omega)$, let $A_i^j=X\setminus G_i^j$. This sequence of pairs of sets has some interesting properties. First, since all of these sets are closed, by hypothesis, each $A^j_i\in \Gamma$ and has property $Q$. Also, since $G^0_i\times G^1_i \,\cap\, \triangle= \emptyset$, $G^0_i \,\cap\, G^1_i=\emptyset$ and $A^0_i \,\cup\, A^1_i=X$. Finally, given $x,y\in X$ such that $x\neq y$, there is an $i_{\langle x,y \rangle}$ such that $\langle x,y \rangle\in G^0_{i_{\langle x,y \rangle}}\times G^1_{i_{\langle x,y \rangle}}$. Notice that $x\in G^0_{i_{\langle x,y \rangle}}$, $x\notin G^1_{i_{\langle x,y \rangle}}$, $y\in G^1_{i_{\langle x,y \rangle}}$ and $y\notin G^1_{i_{\langle x,y \rangle}}$.

By the first point in the hypothesis, for each $\langle j, i \rangle\in \{0, 1\}\times \omega$, there is a surjective USCCV multifunction $L^j_i: B^j_i\rightarrow A^j_i$ such that $B^j_i$ has property $P$.  By the fifth and fourth points in our hypothesis, we have that
\[\prod_{i\in \omega}\left((B^{0}_i\times\{0\}) \,\cup\, (B^{1}_i\times\{1\})\right)\]
has property $P$. Define, for each $i\in \omega$, $\alpha(i)=(x_i^{\alpha}, b_i^{\alpha})$ and $N_i=(B^{0}_i\times\{0\}) \,\cup\, (B^{1}_i\times\{1\})$.
 
Since each $L^j_i$ is USCCV and surjective over $A^j_i$, Proposition \ref{powerfully USCCV} implies that the multifunction
\[M:\prod_{i\in\omega}N_i\rightarrow X^{\omega}\]
with $M(\alpha)=\prod_{i\in \omega}L^{j_i^{\alpha}}_i(B_i^{\alpha})$ is surjective USCCV. Notice that the subspace 
\[\vec{X}=\{\xi\in X^{\omega}: (\forall i,n\in \omega) (\xi(i)=\xi(n))\}\]
inside $X^{\omega}$ is homeomorphic to $X$ with its induced topology. Furthermore, $\vec{X}$ is closed in $X^{\omega}$.
To see this, define the open set $D^{x}_{i,j}=X$ when $i\neq j$ and $D^{x}_{i,i}=X\setminus \{x\}$, for each $i,j\in\omega$ and $x\in X$. We can see that $X^{\omega}\setminus \vec{X}=\bigcup_{x\in X}\bigcup_{i\in \omega}\prod_{j\in \omega}D^{x}_{i,j}$. This implies that $C=M^{-1}(\vec{X})$ is closed in $\prod_{i\in\omega}N_i$, so it has property $P$.

To finish the proof, we will show that $M\restrict C: C\rightarrow \vec{X}$ is single-valued. This will show that $X$ is homeomorphic to a continuous image of a space with property $P$. An element of $\vec{X}$ will be denoted by $\vec{x}$.


Assume that there is an $\alpha\in C$ and $x,y\in X$ such that $\vec{x}, \vec{y}\in M(\alpha)$ and $x\neq y$. In particular, we know that 
\[x,y \in L^{j_{i_{\langle x,y \rangle}}^{\alpha}}_{i_{\langle x,y \rangle}}(B_{i_{\langle x,y \rangle}}^{\alpha})\subseteq A^{j_{i_{\langle x,y \rangle}}^{\alpha}}_{i_{\langle x,y \rangle}}.\]
But, as mentioned when we defined $i_{\langle x,y \rangle}$, it is not possible for both $x$ and $y$ to be in $A^{j_{i_{\langle x,y \rangle}}^{\alpha}}_{i_{\langle x,y \rangle}}$. Therefore, for each $\alpha\in C$, $|M(\alpha) \,\cap\, X^{\omega}|=1$. This shows that  $M\restrict C$ is a continuous surjective single-valued function.
\end{proof}

As we can see, when following the \cite{RJ80} proof one ends up with a single-valued function to the desired space, $X$, from the product of countably many spaces that were, originally, the domain of countably many USCCV multifunctions to subspaces of $X$.

Here is an easy consequence of Theorem \ref{RJ generalized}:

\begin{corollary}
    Every Lindel\"of $\mathbf{\Sigma}$-space with a $G_\delta$ diagonal is a continuous image of a separable metrizable space.
\end{corollary}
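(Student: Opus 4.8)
The strategy is to verify that the class $\Gamma$ of all Lindel\"of $\mathbf{\Sigma}$-spaces, together with the property $Q$ ``has a $G_\delta$ diagonal'' and the property $P$ ``is separable metrizable'', satisfies the five hypotheses of Theorem \ref{RJ generalized}; the conclusion is then immediate. So the proof reduces to five short checks.

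\textbf{Checking the hypotheses.} For (1): a Lindel\"of $\mathbf{\Sigma}$-space is, by Definition \ref{def Lindelof Sigma}, a continuous image of a Lindel\"of $p$-space, and a Lindel\"of $p$-space is a perfect preimage of a separable metrizable space (Definition \ref{def p-space}); by Corollary \ref{CIPP and USCCV} this exhibits the space as a USCCV image of a separable metrizable space, so \emph{every} member of $\Gamma$ (with or without $Q$) has property (1). For (2): any Lindel\"of $\mathbf{\Sigma}$-space is Lindel\"of, and Lindel\"of $\mathbf{\Sigma}$-spaces are closed under finite products (this is standard, or follows from the analogue of Theorem \ref{U and C properties I}; Proposition \ref{powerfully USCCV} applied to a product of two USCCV multifunctions out of separable metrizable spaces does it directly), so $X\times X$ is Lindel\"of; the $G_\delta$-diagonal hypothesis is exactly property $Q$. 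For (3): $\Gamma$ is closed-hereditary because closed subspaces of Lindel\"of $p$-spaces are Lindel\"of $p$-spaces and the restriction of a continuous surjection to the preimage of a closed set is continuous onto that closed set (using Proposition \ref{perfect restriction} for the $p$-space part); having a $G_\delta$-diagonal is obviously inherited by subspaces; and being separable metrizable is closed-hereditary (indeed hereditary). For (4) and (5): countable products and finite disjoint unions of separable metrizable spaces are separable metrizable.

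\textbf{Conclusion.} With all five hypotheses verified, Theorem \ref{RJ generalized} yields that every $X\in\Gamma$ with property $Q$ is a continuous image of a space with property $P$, i.e.\ every Lindel\"of $\mathbf{\Sigma}$-space with a $G_\delta$ diagonal is a continuous image of a separable metrizable space.

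\textbf{Main obstacle.} None of the steps is genuinely hard; the only point requiring a moment's care is hypothesis (3), namely that a closed subspace of a Lindel\"of $\mathbf{\Sigma}$-space is again a Lindel\"of $\mathbf{\Sigma}$-space. If $g:Z\to X$ is a perfect map from a Lindel\"of $p$-space onto a separable metrizable space and $h:X\to Y$ is continuous and onto with $Y$ our ambient Lindel\"of $\mathbf{\Sigma}$-space, then for $C\subseteq Y$ closed one restricts $h$ to $h^{-1}(C)$ and $g$ to $g^{-1}(h^{-1}(C))$; Proposition \ref{perfect restriction} guarantees the latter restriction is still perfect, and $g^{-1}(h^{-1}(C))$ is a closed subspace of the Lindel\"of $p$-space $Z$, hence a separable-metrizable perfect preimage, hence itself a Lindel\"of $p$-space, so $C$ is a Lindel\"of $\mathbf{\Sigma}$-space. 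Once this is spelled out the rest is routine.
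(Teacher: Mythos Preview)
Your proof is correct and follows essentially the same route as the paper: apply Theorem \ref{RJ generalized} with $P$ = ``separable metrizable''. The paper chooses $\Gamma$ = all topological spaces and packs ``Lindel\"of $\mathbf{\Sigma}$-space with $G_\delta$ diagonal'' entirely into $Q$, whereas you put ``Lindel\"of $\mathbf{\Sigma}$'' into $\Gamma$ and keep only the $G_\delta$-diagonal as $Q$; this is a purely cosmetic redistribution, and the same verifications (closed-hereditariness of Lindel\"of $\mathbf{\Sigma}$, Lindel\"ofness of the square) are needed either way.

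One small slip to fix in your ``main obstacle'' paragraph: the continuous surjection $h$ witnessing that $Y$ is Lindel\"of $\mathbf{\Sigma}$ should have domain the Lindel\"of $p$-space $Z$, not the separable metrizable space $X$. As written you have $h:X\to Y$, which would already make $Y$ cosmic and trivialize the corollary. The intended argument is: $h:Z\to Y$ continuous onto, $g:Z\to X$ perfect onto separable metrizable $X$; for $C\subseteq Y$ closed, $h^{-1}(C)$ is closed in $Z$, and $g\restrict h^{-1}(C)$ is perfect onto a subspace of $X$ by Proposition \ref{perfect restriction}, so $h^{-1}(C)$ is a Lindel\"of $p$-space and $C$ is Lindel\"of $\mathbf{\Sigma}$. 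With that correction the argument is complete.
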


\begin{proof}
    Let $\Gamma$ be the class of all topological spaces. Let property $Q$ be ``being a Lindel\"of $\mathbf{\Sigma}$-space with a $G_\delta$ diagonal''. Let property $P$ be ``being a separable metrizable space''.
\end{proof}

Jayne and Rogers use $\Gamma$ as the $K$-analytic spaces, $Q$ as ``separable metrizable space'' and $P$ as ``analytic set'' (they use $\omega^{\omega}$, but analytic is enough). With this strategy, they showed that for $K$-analytic spaces ``analytic space'' is equivalent to ``spaces such that $(X\times X)\setminus \triangle$ is Lindel\"of''. This partially generalizes if we replace ``analytic'' by ``$\mathbf{\Sigma}^1_{\beta}$'':

\begin{theorem}\label{thm:cosmic diagonal}
Given $X\in \pmb{\mathscr{K}}(\mathbf{\Sigma}^1_{\beta})$, if $X$ has a $G_{\delta}$ diagonal then $X$ is a continuous image of a $\mathbf{\Sigma}^1_{\beta}$ set.
\end{theorem}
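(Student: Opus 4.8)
The plan is to deduce Theorem \ref{thm:cosmic diagonal} directly from the general Theorem \ref{RJ generalized} by making the right choices of $\Gamma$, $P$, and $Q$. Specifically, I would take $\Gamma = \pmb{\mathscr{K}}(\mathbf{\Sigma}^1_{\beta})$, let property $P$ be ``being a $\mathbf{\Sigma}^1_{\beta}$ set'' (a subspace of a Polish space, which by our standing convention we may take to be a subspace of $[0,1]$ or $[0,1]^{\omega}$), and let property $Q$ be ``having a $G_{\delta}$ diagonal''. The conclusion of Theorem \ref{RJ generalized} then says exactly that every $X \in \pmb{\mathscr{K}}(\mathbf{\Sigma}^1_{\beta})$ with a $G_{\delta}$ diagonal is a continuous image of a $\mathbf{\Sigma}^1_{\beta}$ set, which is what we want.

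So the work is entirely in verifying the five hypotheses of Theorem \ref{RJ generalized} for these choices. For (1): if $X \in \pmb{\mathscr{K}}(\mathbf{\Sigma}^1_{\beta})$, then by definition $X$ is a USCCV image of a $\mathbf{\Sigma}^1_{\beta}$ set, so every member of $\Gamma$ (with or without $Q$) is a USCCV image of a space with property $P$. For (2): a space in $\pmb{\mathscr{K}}(\mathbf{\Sigma}^1_{\beta})$ is Lindel\"of (by Corollary \ref{USCCV lin}, since $\mathbf{\Sigma}^1_{\beta}$ sets are Lindel\"of), and by Theorem \ref{U and C properties I} point 3) the class is closed under finite products, so $X \times X \in \pmb{\mathscr{K}}(\mathbf{\Sigma}^1_{\beta})$ is also Lindel\"of; the $G_{\delta}$ diagonal is property $Q$ itself. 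For (3): $\Gamma$ is closed-hereditary by the lemma stating $\pmb{\mathscr{K}}(\mathbf{\Sigma}^1_{\beta})$ is closed hereditary; property $P$ (being $\mathbf{\Sigma}^1_{\beta}$) is closed-hereditary since a closed subset of a $\mathbf{\Sigma}^1_{\beta}$ set is $\mathbf{\Sigma}^1_{\beta}$; and having a $G_{\delta}$ diagonal is inherited by subspaces, so $Q$ is closed-hereditary. For (4): countable products of $\mathbf{\Sigma}^1_{\beta}$ sets are $\mathbf{\Sigma}^1_{\beta}$ — this is a standard closure property of the $\sigma$-projective hierarchy (and also follows from Theorem \ref{U and C properties I} point 3) applied to $\mathbf{\Sigma}^1_{\beta}$ sets viewed as members of $\pmb{\mathscr{K}}(\mathbf{\Sigma}^1_{\beta})$ together with Corollary \ref{crl:separable metric in U}). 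For (5): finite unions of $\mathbf{\Sigma}^1_{\beta}$ sets are $\mathbf{\Sigma}^1_{\beta}$, again a standard closure property.

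One subtlety to be careful about is the distinction, flagged in the excerpt, between $\mathbf{\Sigma}^1_{\beta}$ sets living in $[0,1]$ versus $[0,1]^{\omega}$: for hypothesis (4) one genuinely needs the ambient Polish space to be closed under countable products, so I would fix the convention that a $\mathbf{\Sigma}^1_{\beta}$ ``set'' means a $\mathbf{\Sigma}^1_{\beta}$ subspace of some Polish space (equivalently of $[0,1]^{\omega}$), exactly as is used in Corollary \ref{crl:separable metric in U}. The main obstacle, such as it is, is bookkeeping rather than mathematics: one must make sure that the notion of $\mathbf{\Sigma}^1_{\beta}$ set used as property $P$ is robust enough (closed under countable products, finite unions, and passing to closed subspaces) while still being the same notion appearing in the definition of $\pmb{\mathscr{K}}(\mathbf{\Sigma}^1_{\beta})$, so that hypotheses (1), (4), and (5) all speak about the same class. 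Once that is pinned down, the proof is a one-line invocation of Theorem \ref{RJ generalized}.
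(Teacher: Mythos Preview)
Your proposal is correct and follows essentially the same route as the paper: both invoke Theorem \ref{RJ generalized} with $\Gamma=\pmb{\mathscr{K}}(\mathbf{\Sigma}^1_{\beta})$ and $P=$ ``being a $\mathbf{\Sigma}^1_{\beta}$ set''. The only cosmetic difference is the choice of $Q$: the paper takes $Q$ to be ``$(X\times X)\setminus\triangle$ is Lindel\"of'' (observing first that powerful Lindel\"ofness plus $G_\delta$ diagonal yields this), whereas you take $Q$ to be ``has a $G_\delta$ diagonal'' directly---your choice is arguably cleaner, since it matches hypothesis (2) on the nose and is transparently hereditary.
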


\begin{proof}
Since $X$ is powerfully Lindel\"of and $X$ has a $G_{\delta}$ diagonal, then $(X\times X)\setminus \triangle$ is Lindel\"of. Using Theorem \ref{RJ generalized} with $\Gamma=\pmb{\mathscr{K}}(\mathbf{\Sigma}^1_{\beta})$, $P$ the property ``being a $\mathbf{\Sigma}^1_{\beta}$ set'' and $Q$ the (trivial) property ``$(X\times X)\setminus \triangle$ is Lindel\"of''; we have that every space $X$ in $\pmb{\mathscr{K}}(\mathbf{\Sigma}^1_{\beta})$ such that $(X\times X)\setminus \triangle$ is Lindel\"of is a continuous image of a $\mathbf{\Sigma}^1_{\beta}$ set. This finishes the proof.
\end{proof}

Theorem \ref{RJ generalized} gives us a new proof of a result that we proved earlier.

\begin{repcorollary}{crl:separable metric in U}
If $X\in \pmb{\mathscr{K}}(\mathbf{\Sigma}^1_{\beta})$ is a metrizable space, then $X \in \mathbf{\Sigma}^1_{\beta}$.
\end{repcorollary}

\begin{proof}
As before, we note $X$ is separable. Using Theorem \ref{RJ generalized} with $\Gamma=\pmb{\mathscr{K}}(\mathbf{\Sigma}^1_{\beta})$, $P$ the property ``being a $\mathbf{\Sigma}^1_{\beta}$ set'' and $Q$ the property ``$X$ is a separable metrizable space''; we have that every space $X$ in $\pmb{\mathscr{K}}(\mathbf{\Sigma}^1_{\beta})$ that is metrizable is a continuous image of a $\mathbf{\Sigma}^1_{\beta}$ set. This implies that $X$ is a $\mathbf{\Sigma}^1_{\beta}$ set.
\end{proof}

Note that point 2) in Theorem \ref{RJ generalized} creates a path for constructing single-valued functions from USCCV multifunctions even when not working in the separable metrizable space context (but using a property $Q$ of interest).

Finally, we would like to know whether the levels of the hierarchy are preserved by preimages.

\begin{theorem}\label{USCCV preimages U}
Let $Y$ be a $\pmb{\mathscr{K}}(\mathbf{\Sigma}^1_{\xi})$ space, $X\subseteq G$ a  $\pmb{\mathscr{K}}(\mathbf{\Sigma}^1_{\xi})$ space and $F:Y\rightarrow G$ a surjective USCCV multifunction such that $\hat{F}\circ \hat{F}^{-1}(X)=X$ for $\hat{F}:\mu Y\rightarrow \beta G$ a USCCV extension of $F$. Then $F^{-1}(X)$ is $\pmb{\mathscr{K}}(\mathbf{\Sigma}^1_{\xi})$. 
\end{theorem}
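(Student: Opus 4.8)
The plan is to mimic the structure of the proof of Theorem \ref{U and C pp}, replacing the ambient compactifications by Stone--\v Cech compactifications and invoking the USCCV machinery of Section \ref{USCCV Section}. First I would unwind the hypothesis: there is a $\mathbf{\Sigma}^1_{\xi}$ set $A$ (a subset of a Polish space) and a USCCV surjection $G\colon A\to Y$, and likewise a $\mathbf{\Sigma}^1_{\xi}$ set $B$ and a USCCV surjection $H\colon B\to X$. The relevant object is the composite multifunction $F\circ G\colon A\to G$ (the co-domain space), which by Proposition \ref{Whyburn results}(1) is USCCV, and the key target is to realize $(F\circ G)^{-1}(X)$ as a $\mathbf{\Sigma}^1_{\xi}$ set; then $F^{-1}(X)$ is a USCCV image of it via $G\restriction (F\circ G)^{-1}(X)$ (which is well defined precisely because $G$ maps $(F\circ G)^{-1}(X)$ onto $F^{-1}(X)$), hence $\pmb{\mathscr{K}}(\mathbf{\Sigma}^1_{\xi})$.

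Second, I would pass to graphs. By Proposition \ref{USCCV graphs} applied to $\hat F\colon \mu Y\to\beta G$, the space $\mathrm{graph}(\hat F)$ is a perfect preimage of $\mu Y$ via $\pi_{\mu Y}$ and maps continuously onto $\beta G$ via $\pi_{\beta G}$. Now $\mu Y$ is in turn a USCCV image of (hence, via graphs again, a continuous image of a perfect preimage of) the $\mathbf{\Sigma}^1_{\xi}$ set $A$ — more precisely one gets a metrizable compactification once one embeds $A$ in $[0,1]^{\omega}$ and extends $G$ to $\overline A$ by Theorem \ref{USCCV SC-comp}, obtaining $\overline G\colon\overline A\to\mu Y$ with $\overline G(\overline A)=\mu Y$. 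Composing the graph constructions for $\overline G$ and for $\hat F$ produces a separable metrizable space $W\subseteq[0,1]^{\omega}\times\cdots$ sitting over $\overline A$ by a perfect map $p$ and over $\beta G$ by a continuous map $q$, with $q(p^{-1}(A))=\mu Y$-image landing correctly. The crucial computation is to identify $q^{-1}(X)$, or rather $p^{-1}(A)\cap q^{-1}(X)$: the hypothesis $\hat F\circ\hat F^{-1}(X)=X$ is exactly what guarantees that pulling $X$ back through $\hat F$ and pushing forward again does not leak outside $X$, so that the trace of $q^{-1}(\overline X)$ on the ``honest'' part of $W$ is genuinely $q^{-1}(X)$ and not something larger. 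Here $\overline X$ denotes the closure of $X$ in $\beta G$ (or in the metrizable compactification of $G$ if $G$ is separable metrizable).

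Third, the descriptive-set-theoretic bookkeeping: $X$ is $\pmb{\mathscr{K}}(\mathbf{\Sigma}^1_{\xi})$, so by Corollary \ref{crl:separable metric in U}, if $G$ is separable metrizable then $X\in\mathbf{\Sigma}^1_{\xi}$; more care is needed if $G$ is merely Tychonoff, in which case one works with $\hat F\colon\mu Y\to\beta G$ and uses that $B$ is a $\mathbf{\Sigma}^1_{\xi}$ set mapping USCCV onto $X$ to transport the complexity. Then $q^{-1}(\overline X)$ is a closed-in-$W$, hence $\mathbf{\Sigma}^1_{\xi}$, modification of a $\mathbf{\Sigma}^1_{\xi}$ set — using that preimages under continuous maps between Polish spaces preserve $\mathbf{\Sigma}^1_{\xi}$ (point 4) of Proposition in Section \ref{USCCV Section}), together with $p^{-1}(A)\in\mathbf{\Sigma}^1_{\xi}$ by Theorem \ref{U and C properties I}(6) (perfect preimage within a Polish space). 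Intersecting, $p^{-1}(A)\cap q^{-1}(\overline X)\in\mathbf{\Sigma}^1_{\xi}$, and by the hypothesis this set is exactly $(F\circ G)^{-1}(X)$ transported into $W$. Finally $F^{-1}(X)$ is the continuous (indeed USCCV) image of this $\mathbf{\Sigma}^1_{\xi}$ set, so $F^{-1}(X)\in\pmb{\mathscr{K}}(\mathbf{\Sigma}^1_{\xi})$, as desired. For the limit levels $\xi=\gamma$ one reduces to the successor case exactly as in Theorem \ref{U and C pp}, using that $\mathbf{\Sigma}^1_{\gamma}$ is a countable union of lower levels and the classes in question are closed under countable unions by Theorem \ref{U and C properties I}.

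\textbf{Main obstacle.} The delicate point is precisely the interaction between taking outer preimages of multifunctions and passing to compactifications: without the hypothesis $\hat F\circ\hat F^{-1}(X)=X$ the outer preimage $F^{-1}(X)$ need not be recoverable from closed sets in the graph space, since a point $x$ can have $F(x)$ meeting $X$ only in the ``added'' part of the compactification. Verifying that this hypothesis makes $q^{-1}(\overline X)\cap(\text{honest part})$ coincide with the set one wants — and that no extra points sneak in from $\beta G\setminus G$ — is where the real work lies; everything else is an assembly of Theorems \ref{USCCV SC-comp}, \ref{U and C properties I}, and the classical preimage lemma for $\mathbf{\Sigma}^1_{\xi}$ sets.
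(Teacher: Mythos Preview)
Your approach is considerably more elaborate than the paper's, and it contains a gap. The paper's proof is three lines and never descends to the level of $\mathbf{\Sigma}^1_{\xi}$ sets at all: since $\hat F:\mu Y\to\beta G$ is a USCCV multifunction between compact spaces, it is automatically perfect, so by Proposition~\ref{Perfect inverse of perfect} its outer inverse $\hat F^{-1}:\beta G\to\mu Y$ is a perfect USCCV multifunction. The hypothesis $\hat F\circ\hat F^{-1}(X)=X$ says exactly that for every $y\in\hat F^{-1}(X)$ one has $\hat F(y)\subseteq X$; this is what makes the restriction $\hat F^{-1}\restrict X:X\to\hat F^{-1}(X)$ a genuine surjective USCCV multifunction between these subspaces. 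Since $X\in\pmb{\mathscr{K}}(\mathbf{\Sigma}^1_{\xi})$ and the class is closed under USCCV images (Theorem~\ref{U and C properties I}(6)), $\hat F^{-1}(X)\in\pmb{\mathscr{K}}(\mathbf{\Sigma}^1_{\xi})$. Finally $F^{-1}(X)=\hat F^{-1}(X)\cap Y$ because $\hat F$ extends $F$, and the class is closed under finite intersections. No graphs, no Polish embeddings, no case split on limit ordinals.

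Your route unwinds both $Y$ and $X$ to USCCV images of $\mathbf{\Sigma}^1_{\xi}$ sets and tries to do the descriptive set theory down there. The specific gap is your claim that ``$G$ maps $(F\circ G)^{-1}(X)$ onto $F^{-1}(X)$'': the outer preimage $(F\circ G)^{-1}(X)$ equals $G^{-1}(F^{-1}(X))$, and $G(G^{-1}(F^{-1}(X)))$ can properly contain $F^{-1}(X)$, since $G(a)$ may meet $F^{-1}(X)$ without being contained in it. The multifunction $a\mapsto G(a)\cap F^{-1}(X)$ need not be compact-valued, as $F^{-1}(X)$ is not assumed closed in $Y$. You also hedge on the case where the target space $G$ is not separable metrizable, which is exactly the situation in the theorem. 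Both issues are repaired by the paper's device of intersecting with $Y$ at the very end rather than trying to hit $F^{-1}(X)$ directly---but once you do that, the descent to Polish spaces becomes unnecessary and you are back to the paper's argument.
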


\begin{proof}
Let $Y$, $G$, $X$ and $F$ be as described in the theorem. Consider the function given by Theorem \ref{USCCV SC-comp}, $\hat{F}:\mu Y\rightarrow \beta G$. Since $\hat{F}$ is a perfect USCCV multifunction, by Proposition \ref{Perfect inverse of perfect} $\hat{F}^{-1}$ is also perfect. Since $\hat{F}\circ \hat{F}^{-1}(X)=X$, $\hat{F}^{-1}\restrict X: X\rightarrow \hat{F}^{-1}(X)$ is a surjective perfect function. By Theorem \ref{U and C properties I} point 6), $\hat{F}^{-1}(X)$ is $\pmb{\mathscr{K}}(\mathbf{\Sigma}^1_{\xi})$. Since $Y$ is $\pmb{\mathscr{K}}(\mathbf{\Sigma}^1_{\xi})$, using a different point of Theorem \ref{U and C properties I} we have, 
\[F^{-1}(X)=\hat{F}^{-1}(X) \,\cap\, Y\]
is $\pmb{\mathscr{K}}(\mathbf{\Sigma}^1_{\xi})$.
\end{proof}

\begin{theorem}\label{continuous preimage U}
Let $X$ be a $\pmb{\mathscr{K}}(\mathbf{\Sigma}^1_{\xi})$ space. Let $f:Y\rightarrow X$ be a surjective continuous function. Then $Y$ is $\pmb{\mathscr{K}}(\mathbf{\Sigma}^1_{\xi})$. 
\end{theorem}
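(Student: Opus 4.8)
The plan is to convert the preimage problem into an image problem and invoke closure of $\pmb{\mathscr{K}}(\mathbf{\Sigma}^1_{\xi})$ under USCCV multifunctions (Theorem \ref{U and C properties I} point 6), equivalently the fact that membership in $\pmb{\mathscr{K}}(\mathbf{\Sigma}^1_{\xi})$ is a perfect property, Corollary \ref{U perfect property}). Rather than transporting $X$ backwards along $f$, I would pass to the inverse multifunction $f^{-1}\colon X\to Y$, $f^{-1}(x)=\{y\in Y : f(y)=x\}$. Since $f$ is surjective, $f^{-1}$ is a surjective multifunction onto $Y$; so \emph{if} $f^{-1}$ is USCCV, then $Y$ is a USCCV image of the $\pmb{\mathscr{K}}(\mathbf{\Sigma}^1_{\xi})$ space $X$, and Theorem \ref{U and C properties I} point 6) delivers $Y\in\pmb{\mathscr{K}}(\mathbf{\Sigma}^1_{\xi})$ in a single step.

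The whole argument therefore turns on whether $f^{-1}$ is USCCV, and Whyburn's Proposition \ref{Whyburn results} point 2) settles this precisely: the outer inverse of a single-valued map $f$ is USCCV if and only if $f$ is closed with compact point-inverses, that is, if and only if $f$ is \emph{perfect}. Hence, as soon as $f$ is perfect the proof is immediate along the lines just sketched; indeed in that case $Y$ is exactly a perfect preimage of $X$ and Corollary \ref{U perfect property} applies verbatim, with no further work.

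The main obstacle is that this is genuinely all one can extract, and for a \emph{merely} continuous surjection the strategy collapses --- signalling that the hypothesis must be strengthened, since the assertion fails for a bare continuous surjection. A continuous $f$ may have non-compact fibres and need not be closed, so $f^{-1}$ is not USCCV and there is no USCCV multifunction realizing $Y$ as an image of $X$. That the failure lies in the hypothesis rather than in the method is exhibited by the constant map onto a singleton: the one-point space belongs to every $\pmb{\mathscr{K}}(\mathbf{\Sigma}^1_{\xi})$ and every nonempty $Y$ surjects continuously onto it, yet all members of $\pmb{\mathscr{K}}(\mathbf{\Sigma}^1_{\xi})$ are Lindel\"of (Corollary \ref{USCCV lin}) while an arbitrary $Y$ --- e.g.\ a discrete space of size $\aleph_1$ --- need not be. Thus the hypothesis on $f$ cannot be weakened below \emph{perfect}: read with ``$f$ a surjective perfect function'' (equivalently, ``$Y$ a perfect preimage of $X$''), the theorem holds by the two-step argument above, whereas with only continuity of $f$ it does not.
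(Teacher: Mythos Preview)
Your analysis is correct: the theorem as stated is false, and your counterexample (an uncountable discrete space mapping continuously onto a point) is decisive, since every member of $\pmb{\mathscr{K}}(\mathbf{\Sigma}^1_{\xi})$ is Lindel\"of by Corollary~\ref{USCCV lin}. The paper's own proof is defective. It invokes Theorem~\ref{USCCV preimages U}, but that theorem requires the \emph{domain} $Y$ to lie in $\pmb{\mathscr{K}}(\mathbf{\Sigma}^1_{\xi})$ as a hypothesis --- exactly the conclusion being sought --- so the citation is circular. If one tries to salvage the argument by extracting from the proof of Theorem~\ref{USCCV preimages U} only the intermediate conclusion that $\hat{f}^{-1}(X)\in\pmb{\mathscr{K}}(\mathbf{\Sigma}^1_{\xi})$, one still does not recover $Y$: for a merely continuous $f$ the set $\hat{f}^{-1}(X)\subseteq\beta Y$ can strictly contain $Y$ (in your counterexample $\hat{f}^{-1}(X)=\beta Y$), and the final step --- intersecting with $Y$ --- is precisely the one that needs $Y\in\pmb{\mathscr{K}}(\mathbf{\Sigma}^1_{\xi})$ in advance.

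Your proposed repair, strengthening ``continuous'' to ``perfect'', does fix the statement. As you observe, it then reduces immediately to Corollary~\ref{U perfect property}. The paper's route via Theorem~\ref{USCCV preimages U} also goes through under that hypothesis, since for perfect $f$ Proposition~\ref{SC perfect extension} gives $\hat{f}(\beta Y\setminus Y)\subseteq\beta X\setminus X$ and hence $\hat{f}^{-1}(X)=Y$, so the circularity disappears. Either way, with ``perfect'' in place of ``continuous'' the theorem adds nothing beyond Corollary~\ref{U perfect property}.
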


\begin{proof}
Let $\hat{f}:\beta Y\rightarrow \beta X$ the continuous extension of $f$ given by the Stone-\v{C}ech compactification. We know that $\hat{f}\circ \hat{f}^{-1}(X)=X$. Using Theorem \ref{USCCV preimages U}, we conclude that $Y$ is $\pmb{\mathscr{K}}(\mathbf{\Sigma}^1_{\xi})$.
\end{proof}

\section{Applications to Selection Principles}\label{Section SP}

Now we shall consider Selection Principles for $\pmb{\mathscr{K}}$ and related classes.

We don't need the following alternative definition of ``Hurewicz'', but it may be of interest to those unfamiliar with Hurewicz spaces.

\begin{propo}
\emph{\cite{Tall2011}} A topological space $X$ is Hurewicz if and only if, given a \v{C}ech-complete $G$ such that $X\subseteq G$, there exists a $\sigma$-compact space $F$ such that $X\subseteq F\subseteq G$.
\end{propo}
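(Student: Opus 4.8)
The statement is a characterization of the Hurewicz property in terms of \v{C}ech-complete extensions, attributed to \cite{Tall2011}, so my job is to reconstruct the standard argument. The plan is to prove both directions, with the natural observation that it suffices to verify the criterion for one particular \v{C}ech-complete $G$ containing $X$ (namely a $G_\delta$ set in $\beta X$, or a metrizable completion in the metrizable case), because if $X \subseteq G' \subseteq G$ with $G'$ \v{C}ech-complete and the criterion holds in $G$, then the witnessing $\sigma$-compact $F$ can be intersected with $G'$; conversely one can enlarge. Actually, the cleanest route is: first show the ``$\Leftarrow$'' direction (existence of $F$ implies Hurewicz), then the ``$\Rightarrow$'' direction using the characterization of Hurewicz via a \v{C}ech-complete (or compact) container.

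\textbf{The easy direction ($\Leftarrow$).} Suppose $X \subseteq F \subseteq G$ with $F$ $\sigma$-compact, say $F = \bigcup_n K_n$ with each $K_n$ compact and $K_n \subseteq K_{n+1}$. Given a sequence $\langle \mathcal{U}_m : m \in \omega\rangle$ of open covers of $X$ with no finite subcover, I would first extend each $\mathcal{U}_m$ to an open family in $G$ (or in $\beta X$) covering $X$; since $K_n$ is compact it is covered by finitely many members of (the extension of) $\mathcal{U}_n$, but I need to be careful that these finitely many sets trace on $X$ to members of $\mathcal{U}_n$ — this is arranged by taking for each $U \in \mathcal{U}_m$ an open $\tilde U$ in $G$ with $\tilde U \cap X = U$. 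Then finitely many $\tilde U$'s cover $K_n$, hence cover $X \cap K_n$; but $X = \bigcup_n (X \cap K_n)$... the subtlety is that $X$ need not equal $X \cap K_n$ for any single $n$, so I should instead use a standard bookkeeping: split $\omega$ into infinitely many infinite pieces, or use that $F$ being $\sigma$-compact and $F \supseteq X$ already gives Hurewicz-ness of $F$ (since $\sigma$-compact spaces are Hurewicz) and Hurewicz is inherited by... no, it is not closed-hereditary in general, but it \emph{is} inherited by subspaces that are $F_\sigma$, and more relevantly, the Hurewicz property of $X$ follows because for the cover $\langle \mathcal U_m\rangle$ we can choose, reindexing so that we handle $K_n$ using covers $\mathcal U_m$ for $m$ in the $n$-th block, finite subfamilies whose union is a $\gamma$-cover of each $K_n$ and hence (taking the union over all blocks) of $X$. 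I expect the routine bookkeeping here to be the least interesting part.

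\textbf{The harder direction ($\Rightarrow$) and the main obstacle.} Assume $X$ is Hurewicz and $X \subseteq G$ with $G$ \v{C}ech-complete. Since $G$ is \v{C}ech-complete, fix a compactification $cG$ (e.g. $\beta G$) and a sequence of open sets $\langle O_n : n \in \omega \rangle$ in $cG$ with $G = \bigcap_n O_n$. For each $n$, the family of all open sets in $cG$ whose closure lies in $O_n$ covers $G \supseteq X$; applying the Hurewicz property to these covers (pulled back to open covers of $X$) yields, for each $n$, a \emph{finite} subfamily $\mathcal{V}_n$ such that $\langle \mathcal{V}_n \rangle$ forms a $\gamma$-cover of $X$ — i.e., each point of $X$ lies in $\bigcup \mathcal{V}_n$ for all but finitely many $n$. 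Set $C_n = \bigcup_{V \in \mathcal{V}_n} \overline{V}^{cG}$, a compact subset of $O_n$, and let $F = \bigcup_{k} \bigcap_{n \ge k} C_n$. The point is that $\bigcap_{n \ge k} C_n$ is a closed subset of the compact space $cG$ contained in $\bigcap_{n} O_n = G$, hence compact and contained in $G$; so $F$ is $\sigma$-compact and $F \subseteq G$. The $\gamma$-cover property is exactly what guarantees $X \subseteq F$: a point $x \in X$ lies in $\bigcup \mathcal V_n$, hence in $C_n$, for all but finitely many $n$, so $x \in \bigcap_{n \ge k} C_n$ for some $k$. I expect the main obstacle to be precisely this interplay — arranging that the finite selections can be taken with closures inside the $O_n$'s and verifying that the $\gamma$-cover condition (not merely a cover condition, which would only give Menger and a $K_\sigma$-in-$cG$ that need not sit inside $G$) converts into membership in the countable union of compacta $\bigcap_{n\ge k} C_n$. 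Once that is set up, $X \subseteq F \subseteq G$ with $F$ $\sigma$-compact, completing the proof.
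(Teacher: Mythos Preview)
The paper does not prove this proposition; it is quoted from \cite{Tall2011} as an alternative characterization of the Hurewicz property and is explicitly flagged as not needed for the paper's results. So there is no proof in the paper to compare against, and I will simply assess your argument.

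Your $\Rightarrow$ direction is correct and is the standard argument.

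Your $\Leftarrow$ direction, however, has a genuine gap. You start with an arbitrary $\sigma$-compact $F=\bigcup_n K_n$ containing $X$ and an arbitrary sequence $\langle\mathcal{U}_m\rangle$ of open covers of $X$, extend each $U\in\mathcal{U}_m$ to $\tilde U$ open in $G$ with $\tilde U\cap X=U$, and then assert that finitely many $\tilde U$'s cover $K_n$ by compactness. But the $\tilde U$'s are only known to cover $X$, not $K_n$. Indeed, every (completely regular) space $X$ sits inside the compact space $\beta X$, so if your argument worked it would show every space is Hurewicz. The bookkeeping devices you mention (splitting $\omega$ into blocks, $F_\sigma$-heredity of Hurewicz) do not repair this, as you yourself partly notice.

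The correct route for $\Leftarrow$ is the contrapositive, and it uses the full strength of the universal quantifier over $G$. Given a sequence $\langle\mathcal{U}_n\rangle$ witnessing that $X$ is not Hurewicz, extend each $U\in\mathcal{U}_n$ to $\tilde U$ open in $\beta X$, set $O_n=\bigcup_{U\in\mathcal{U}_n}\tilde U$, and let $G=\bigcap_n O_n$. Then $G$ is \v{C}ech-complete and $X\subseteq G$. If there were a $\sigma$-compact $F=\bigcup_k K_k$ (with $K_k$ increasing) satisfying $X\subseteq F\subseteq G$, then \emph{now} each $K_n\subseteq G\subseteq O_n$, so compactness yields a finite $\mathcal{V}_n\subseteq\mathcal{U}_n$ whose extensions cover $K_n$; since every $x\in X$ lies in some $K_k\subseteq K_n$ for all $n\geq k$, the sequence $\langle\bigcup\mathcal{V}_n\rangle$ is a $\gamma$-cover of $X$, contradicting the choice of $\langle\mathcal{U}_n\rangle$. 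The key idea you missed is that the \v{C}ech-complete $G$ must be \emph{built from the given covers}, so that the compact pieces of any $F\subseteq G$ are automatically covered by the extended families.
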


\begin{theorem}\label{Menger is Hurewicz}
$\sigma$-PD implies every Menger projectively $\sigma$-projective space is Hurewicz. In ZFC, Menger projectively analytic spaces are Hurewicz.
\end{theorem}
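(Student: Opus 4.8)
### Proof proposal for Theorem \ref{Menger is Hurewicz}

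The plan is to reduce the Hurewicz property of a Menger projectively $\sigma$-projective space $X$ to the corresponding statement for a genuine $\sigma$-projective \emph{set} of reals, where Proposition \ref{prop:Menger proj is cpct} (and its ZFC analogue for analytic sets, \cite{Hu}) already gives us $\sigma$-compactness, hence Hurewicz. The standard device for this kind of argument (compare \cite{Tall20}, \cite{TT}) is to pass to the Stone-\v{C}ech remainder: a Lindel\"of space $X$ is Hurewicz if and only if its remainder $X^{\ast}$ in some (equivalently, by Proposition \ref{Perfect property lemma}, any) compactification is, in a suitable sense, ``small'' \textemdash more precisely, one uses that $X$ is Hurewicz iff whenever $X \subseteq G$ with $G$ \v{C}ech-complete there is a $\sigma$-compact $F$ with $X \subseteq F \subseteq G$ (the proposition just quoted from \cite{Tall2011}), which translates into a statement about $\beta X \setminus X$. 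So the first step is: reduce to showing that $X^{\ast}$ can be covered appropriately, and observe that $X$ Menger implies $X^{\ast}$ is of countable type (Proposition \ref{remainder of countable type}) and in fact, via the Menger property, has a stronger covering feature that we can exploit.

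Second, I would use the hypothesis ``projectively $\sigma$-projective'' to control the \emph{real-valued} continuous images of $X$. The key is to produce, from a putative failure of the Hurewicz property, a continuous map $f : X \to \mathds{R}$ (or into $[0,1]^{\omega}$) whose image $f(X)$ is a Menger $\sigma$-projective set of reals that fails to be $\sigma$-compact \textemdash contradicting Proposition \ref{prop:Menger proj is cpct} under $\sigma$-PD (resp.\ \cite{Hu} in the analytic ZFC case). Concretely: if $X$ is not Hurewicz, there is a sequence of open covers witnessing this; by Lindel\"ofness we may assume each cover is countable, and we can encode the combinatorics of these covers by a single continuous function $f: X \to \mathds{R}$ so that a cover-selection for $f(X)$ pulls back to one for $X$. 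This is the standard ``Menger/Hurewicz is determined by countable data, hence by a real-valued continuous image'' trick; it shows $X$ Menger $\Rightarrow f(X)$ Menger and $X$ not Hurewicz $\Rightarrow f(X)$ not Hurewicz, for a well-chosen $f$. Since $X$ is projectively $\sigma$-projective, $f(X)$ is $\sigma$-projective; since $f(X)$ is Menger and $\sigma$-projective, under $\sigma$-PD it is $\sigma$-compact (Proposition \ref{prop:Menger proj is cpct}), hence Hurewicz \textemdash contradiction.

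Third, for the ZFC ``projectively analytic'' clause, exactly the same argument runs, using instead that Menger analytic sets of reals are $\sigma$-compact (\cite{Hu}), which is a ZFC theorem; no determinacy is invoked because $f(X)$ is now an analytic set.

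The main obstacle I expect is the encoding step in the second paragraph: showing that a single real-valued continuous image of $X$ faithfully carries the obstruction to being Hurewicz. Menger-ness transfers downward along continuous maps trivially (continuous images of Menger spaces are Menger), but transferring the \emph{failure} of Hurewicz \emph{upward} \textemdash i.e.\ arranging $f$ so that $f(X)$ Hurewicz would force $X$ Hurewicz \textemdash requires care: one must build $f$ from a countable family of open covers and check that a $\gamma$-cover selection in the image lifts. The right framework is to take the covers to be (refinements to) countable cozero covers (legitimate since $X$ is Tychonoff and Lindel\"of), assemble them into a map to $[0,1]^{\omega}$, and verify that the $\gamma$-cover combinatorics is preserved; here the fact that $X$ is projectively $\sigma$-projective lets us work in $[0,1]^{\omega}$ and then transfer to $\mathds{R}$ via the remark after Corollary \ref{crl:separable metric in U}. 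Once this lifting lemma is in hand, the rest is a direct appeal to Proposition \ref{prop:Menger proj is cpct} and \cite{Hu}.
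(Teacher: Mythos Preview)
Your argument is correct, but it is considerably longer than necessary because you are essentially re-deriving a result that the paper simply cites as a black box. The paper's entire proof is a one-line appeal to Proposition~\ref{Lin menger are hur} (\cite{Tall11Hu}): \emph{Lindel\"of projectively $\sigma$-compact spaces are Hurewicz}. The logic is then immediate: a Menger projectively $\sigma$-projective space $X$ is Lindel\"of, and for every continuous $f:X\to\mathds{R}$ the image $f(X)$ is Menger (preserved by continuous maps) and $\sigma$-projective (by hypothesis), hence $\sigma$-compact by Proposition~\ref{prop:Menger proj is cpct} under $\sigma$-PD (resp.\ by \cite{Hu} in the analytic ZFC case). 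Thus $X$ is Lindel\"of and projectively $\sigma$-compact, so Hurewicz.

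Your second and fourth paragraphs are precisely an \emph{unpacking} of Proposition~\ref{Lin menger are hur}: the ``encoding step'' you describe\textemdash building a map to $[0,1]^{\omega}$ from countable cozero refinements so that a $\gamma$-selection downstairs lifts to one upstairs\textemdash is exactly how one proves that Lindel\"of projectively $\sigma$-compact implies Hurewicz. So your route is not genuinely different, just less compressed: you reprove the cited proposition rather than invoke it. Your first paragraph, on Stone--\v{C}ech remainders and countable type, is a detour you never actually use; the argument stands entirely on the encoding-into-$[0,1]^{\omega}$ idea, and you can safely delete that paragraph.
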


\begin{proof}
This follows from \begin{propo}{\emph{\cite{Tall11Hu}}}\label{Lin menger are hur}
Lindel\"of projectively $\sigma$-compact spaces are Hurewicz.\vspace{-0.2cm}
\end{propo}
\end{proof}

\begin{corollary}\label{U Menger is Hur}
$\sigma$-PD implies Menger spaces in $\pmb{\mathscr{K}}$ are Hurewicz.
\end{corollary}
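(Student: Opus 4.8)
\textbf{Proof plan for Corollary \ref{U Menger is Hur}.}

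The plan is to derive this as an immediate consequence of Theorem \ref{Menger is Hurewicz} together with Theorem \ref{U and C pp}. Assume $\sigma$-PD and let $X \in \pmb{\mathscr{K}}$ be Menger. The only thing to check is that $X$ satisfies the hypothesis of Theorem \ref{Menger is Hurewicz}, namely that $X$ is projectively $\sigma$-projective; that property then combines with Mengerness to give Hurewicz.

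First I would recall that $X$ is Lindel\"of: by Corollary \ref{USCCV lin}, USCCV images of Lindel\"of spaces are Lindel\"of, and every $\sigma$-projective set, being a separable metrizable space, is Lindel\"of; hence $X$, being a USCCV image of some $\mathbf{\Sigma}^1_\xi$ set, is Lindel\"of. (In fact one does not even need this separately, since Menger already implies Lindel\"of, but it is reassuring that $\pmb{\mathscr{K}}$ sits inside the Lindel\"of spaces.) Next, Theorem \ref{U and C pp} states precisely that every member of $\pmb{\mathscr{K}}$ is projectively $\sigma$-projective: given any continuous $f : X \to \mathds{R}$, the image $f(X)$ is a $\sigma$-projective set. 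So $X$ is a Menger projectively $\sigma$-projective space, and Theorem \ref{Menger is Hurewicz} (using $\sigma$-PD) yields that $X$ is Hurewicz.

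There is essentially no obstacle here; the corollary is a bookkeeping combination of two theorems already established. The only subtlety worth a sentence is that Theorem \ref{Menger is Hurewicz} is proved via Proposition \ref{Lin menger are hur} (Lindel\"of projectively $\sigma$-compact spaces are Hurewicz), so one implicitly uses that $\sigma$-PD upgrades ``projectively $\sigma$-projective'' to ``projectively $\sigma$-compact'' for Menger — or rather Lindel\"of — images: by Proposition \ref{prop:Menger proj is cpct}, $\sigma$-PD implies every Menger $\sigma$-projective set is $\sigma$-compact, and a continuous image $f(X)$ of a Menger space is Menger, hence $f(X)$ is $\sigma$-compact. This is exactly the chain internal to the proof of Theorem \ref{Menger is Hurewicz}, so invoking that theorem directly suffices and no new argument is required.
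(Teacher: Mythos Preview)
Your proposal is correct and matches the paper's approach exactly: the corollary is obtained by combining Theorem \ref{U and C pp} (members of $\pmb{\mathscr{K}}$ are projectively $\sigma$-projective) with Theorem \ref{Menger is Hurewicz}. The paper gives no separate argument beyond this; your additional remarks on Lindel\"ofness and the internal mechanism via Propositions \ref{prop:Menger proj is cpct} and \ref{Lin menger are hur} are accurate but optional elaborations.
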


As mentioned in the introduction, this conclusion cannot easily be improved since there is a $K$-analytic space that is Hurewicz but not $\sigma$-compact. Also, $\pmb{\mathscr{K}}$ is not the same as the class of all projectively $\sigma$-projective spaces.

\begin{repexamp}{Okunev space}
A Hurewicz $K$-analytic space that is not $\sigma$-compact: Okunev's space.
\end{repexamp}

\begin{repexamp}{Peng L-space}
There are projectively $\sigma$-projective spaces that are not in $\pmb{\mathscr{K}}$: Peng's L-space.
\end{repexamp}

Nevertheless, demanding a little more structure on the space lets us get a stronger conclusion. The following results come from \cite{KKP} or are direct generalizations of results of Rogers and Jayne in \cite{RJBorel}. For the following proofs it will be more convenient to use $\omega^{\omega}$ rather than $\mathds{R}$ or $[0,1]$. 

\begin{propo}\label{perfect special not sigma subspace}
\emph{\cite{KKP}} For every perfect Lindel\"of space $X$ that is not $\sigma$-compact, there exists a non-empty closed subset $N\subseteq X$ such that for every open set $U$ of $X$, $U\cap N$ is not $\sigma$-compact.
\end{propo}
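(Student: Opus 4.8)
The plan is to exploit that $X$ is perfect (every closed set is $G_\delta$) and Lindel\"of but not $\sigma$-compact, and to extract the witness $N$ by a Baire-category-style argument inside $X$ itself. First I would fix a metrizable-like setting: since $X$ is Lindel\"of, work with a countable family generating enough of the topology — more precisely, since we only need closed sets and countable operations, I would pass to a \v{C}ech-complete (indeed compact) space $Y=\beta X$ containing $X$, and use that $X$ is $G_\delta$-like in the sense that its non-$\sigma$-compactness is detected locally. The key observation is the following dichotomy: for each open $U\subseteq X$, either $U$ is $\sigma$-compact, or it is not. Let $\mathcal{W}$ be the union of all open $U\subseteq X$ that \emph{are} $\sigma$-compact. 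Since $X$ is Lindel\"of, $\mathcal{W}$ is itself a countable union of such open sets, hence $\sigma$-compact. Set $N=X\setminus \mathcal{W}$, a closed subset of $X$.

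The next step is to verify $N$ has the stated property and is non-empty. Non-emptiness is immediate: if $N=\emptyset$ then $X=\mathcal{W}$ would be $\sigma$-compact, contradicting the hypothesis. For the main property, suppose toward a contradiction that some open $U\subseteq X$ meets $N$ but $U\cap N$ \emph{is} $\sigma$-compact. Here is where I would use that $X$ is a perfect space: $U\cap N$ is relatively closed in $U$, and $U\setminus N = U\cap\mathcal{W}$ is open and $\sigma$-compact (as an open subset of the $\sigma$-compact $\mathcal{W}$, using that open subsets of $\sigma$-compact Lindel\"of spaces are $\sigma$-compact — this needs $X$ perfect, since in a perfect space open sets are $F_\sigma$). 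Then $U = (U\cap N)\cup(U\cap\mathcal{W})$ is a union of two $\sigma$-compact sets, hence $\sigma$-compact. But then $U\subseteq\mathcal{W}$ by definition of $\mathcal{W}$, so $U\cap N=\emptyset$, contradicting that $U$ meets $N$. Therefore every open $U$ meeting $N$ has $U\cap N$ not $\sigma$-compact; and for open $U$ disjoint from $N$ the statement ``$U\cap N$ not $\sigma$-compact'' is vacuous or we simply restrict attention to the relevant $U$ (alternatively, rephrase: for every open $U$ with $U\cap N\neq\emptyset$).

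The step I expect to be the main obstacle is the claim that an open subset of a $\sigma$-compact Lindel\"of \emph{perfect} space is again $\sigma$-compact. This is exactly where the hypothesis ``$X$ perfect'' (closed sets are $G_\delta$, equivalently open sets are $F_\sigma$) earns its keep: an open $V$ in a $\sigma$-compact space is a countable union of compact sets intersected with $V$, and writing $V=\bigcup_n C_n$ with $C_n$ closed in $X$ (possible since $V$ is $F_\sigma$), each $C_n$ is closed in a $\sigma$-compact space hence $\sigma$-compact, so $V$ is $\sigma$-compact. I would double-check that Lindel\"ofness is not separately needed here — it is used only to guarantee $\mathcal{W}$ is a \emph{countable} union of $\sigma$-compact opens. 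Once that lemma is in hand, the argument above closes cleanly, and the construction of $N$ as the complement of the maximal $\sigma$-compact open set is the natural choice dictated by \cite{KKP}.
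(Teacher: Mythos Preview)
The paper does not give its own proof of this proposition; it simply cites \cite{KKP}. Your argument is the standard one and is essentially correct: take $N$ to be the complement of the union $\mathcal{W}$ of all $\sigma$-compact open subsets, and use perfectness to show that any open set meeting $N$ cannot itself be $\sigma$-compact.

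Two small points worth tightening. First, the passage about $\beta X$ in your opening paragraph is never used and can be deleted. Second, when you write ``Since $X$ is Lindel\"of, $\mathcal{W}$ is itself a countable union of such open sets,'' you are implicitly using that $\mathcal{W}$ is Lindel\"of, and open subsets of Lindel\"of spaces need not be Lindel\"of. The fix is exactly the one you already use later: since $X$ is perfect, $\mathcal{W}$ is $F_\sigma$ in $X$, hence a countable union of closed (therefore Lindel\"of) sets, hence Lindel\"of; now extract a countable subcover from the cover by $\sigma$-compact opens. So perfectness is doing work in \emph{both} steps, not only in the verification that $U\cap\mathcal{W}$ is $\sigma$-compact. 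With that adjustment the proof is complete, and your reading of the statement as ``for every open $U$ with $U\cap N\neq\emptyset$'' is the intended one.
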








\begin{lemma}\label{perfect special sequence of closed sets}
For every perfect Lindel\"of space $X$ that is not $\sigma$-compact, there exist $L_n$, $n\in\omega$, pairwise disjoint closed sets that are not $\sigma$-compact and such that $\bigcup_{n\in \omega}L_n$ is closed.
\end{lemma}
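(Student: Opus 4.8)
The plan is to iterate Proposition \ref{perfect special not sigma subspace} so as to split off countably many pairwise disjoint ``bad'' closed pieces, while being careful that the union of the pieces stays closed. The natural device is to carve the pieces out of a single $N$ supplied by Proposition \ref{perfect special not sigma subspace}, using a \emph{discrete-in-an-open-set} construction: since $X$ is perfect and Lindel\"of, closed sets are $G_\delta$, so open sets are $F_\sigma$ and in particular every open subset of $X$ is again Lindel\"of and perfect, and a closed subset of a perfect Lindel\"of space is perfect Lindel\"of. These hereditary facts are what let the recursion continue.

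First I would apply Proposition \ref{perfect special not sigma subspace} to $X$ to get a non-empty closed $N\subseteq X$ such that $U\cap N$ is not $\sigma$-compact for every open $U$ meeting $N$. Then I would recursively choose points $x_n\in N$ and open sets $W_n\subseteq X$ with $x_n\in W_n$, pairwise disjoint, whose closures $\overline{W_n}$ are also pairwise disjoint and contained in some fixed open set $V$ with $\overline V\cap N$ still ``everywhere non-$\sigma$-compact'' — here I use complete regularity (assumed throughout the paper) to separate the already-chosen finite configuration from a new point, and I use the fact that $N$ has no $\sigma$-compact relatively open piece to guarantee $N$ is not locally compact anywhere on $V$, so such a sequence of points with disjoint closed neighborhoods exists (if $N$ were locally compact at some point we could instead directly extract the $L_n$ from a genuinely NLC closed subset; I'd handle the locally-compact-somewhere case by first shrinking $N$). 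Set $L_n = N\cap \overline{W_n}$. Each $L_n$ is closed in $X$; each $L_n$ contains the relatively open set $N\cap W_n$, which is not $\sigma$-compact, hence $L_n$ is not $\sigma$-compact; and the $L_n$ are pairwise disjoint since the $\overline{W_n}$ are.

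The remaining — and genuinely delicate — point is that $\bigcup_{n\in\omega} L_n$ must be closed in $X$. This does \emph{not} follow from disjointness alone; it requires that the closed neighborhoods $\overline{W_n}$ be \emph{locally finite} (equivalently, that the $W_n$ cluster only at points outside $N$, and in fact nowhere). To arrange this I would, at stage $n$, not merely pick $x_n$ arbitrarily but pick it inside a shrinking nested sequence of open sets: choose a point $p\in N$ and a countable decreasing neighborhood base is not available in general, so instead I would use that $X$ is perfect to write $V$ as $\bigcup_k C_k$ with $C_k$ closed increasing, choose $x_n$ and $\overline{W_n}$ with $\overline{W_n}\subseteq V\setminus C_n$; then any point of $X$ has a neighborhood (either the complement of $\overline V$, or — for a point $q\in \overline V$ — the interior of some $C_k$, since $\overline V\subseteq \bigcup_k \operatorname{int} C_{k+1}$ can be forced by a careful choice, or more robustly one works in $\mu X\setminus X$-style arguments) meeting only finitely many $\overline{W_n}$, giving local finiteness and hence $\bigcup_n \overline{W_n}$ closed, whence $\bigcup_n L_n = N\cap \bigcup_n \overline{W_n}$ is closed. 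I expect the management of this local-finiteness bookkeeping — choosing the $W_n$ to escape to the ``boundary'' of a fixed open set in a controlled way while keeping each $N\cap W_n$ non-$\sigma$-compact — to be the main obstacle; everything else is a routine application of the hereditary properties of perfect Lindel\"of spaces together with Proposition \ref{perfect special not sigma subspace}.
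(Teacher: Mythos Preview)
Your overall strategy --- pass to the $N$ of Proposition \ref{perfect special not sigma subspace} and carve out countably many disjoint closed non-$\sigma$-compact pieces whose union stays closed --- is right, and you correctly isolate local finiteness of the pieces as the crux. But the specific mechanism you propose has a genuine gap. You want closed $C_k\uparrow V$ and $\overline{W_n}\subseteq V\setminus C_n$; for $\bigcup_n\overline{W_n}$ to be closed you then need every point of $\overline{\bigcup_n\overline{W_n}}$ to lie in some $\operatorname{int}(C_k)$. You assert that ``$\overline V\subseteq\bigcup_k\operatorname{int}C_{k+1}$ can be forced by a careful choice,'' but points of $\overline V\setminus V$ cannot lie in the interior of any $C_k\subseteq V$, and you never say what $V$ is or why $N$ should meet $V\setminus C_n$ for every $n$. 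The allusion to ``$\mu X\setminus X$-style arguments'' does not help here.

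The paper sidesteps this bookkeeping with a different construction. Since $N$ is Lindel\"of and not compact, it is not countably compact, so there are closed $F_n\subseteq N$ with $F_n\downarrow\emptyset$ and $F_0=N$; perfectness (plus regularity) gives open $O^m_n$ with $\bigcap_m O^m_n=F_n$ and $\overline{O^{m+1}_n}\subseteq O^m_n$. Setting $E^{(i)}=\bigcap_{n,m\le i}\overline{O^m_n}$ and $U^{(i)}=\bigcap_{n,m\le i}O^m_n$, one has $U^{(i+1)}\subseteq E^{(i+1)}\subseteq U^{(i)}$ and $\bigcap_i E^{(i)}=\emptyset$. A pigeonhole argument (if every ``annulus'' were $\sigma$-compact, $N$ would be) picks indices $i_n$ so that $L_n=E^{(i_n+1)}\setminus U^{(i_{n+1})}$ is closed and not $\sigma$-compact; disjointness is automatic from the nesting, and closedness of $\bigcup_n L_n$ is immediate from $\bigcap_i E^{(i)}=\emptyset$, since every point is outside some $E^{(m)}$ and hence has a neighborhood missing all but finitely many $L_n$. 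No boundary analysis is needed. (If you prefer to rescue your own line, note that $N$ is normal with an infinite closed discrete set $\{d_n\}$; a Urysohn function $f:N\to\mathbb{R}$ with $f(d_n)=n$ gives the locally finite family $L_n=\overline{f^{-1}((n-\tfrac13,n+\tfrac13))}$ directly.)
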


\begin{proof}
Let $X$ be a perfect topological Lindel\"of space that is not $\sigma$-compact and $N$ be the closed subspace obtained from Proposition \ref{perfect special not sigma subspace}. We will work with the induced topology on $N$ from this point onward.

Since $N$ is not compact but is Lindel\"of, it is not countably compact, so there is a sequence of countably many decreasing closed sets, $F_n\subseteq N$, such that $\bigcap_{n\in \omega}F_n=\emptyset$ and $F_0=N$. Since $X$ and $N$ are both perfect, for each $n\in \omega$ there exist countably many decreasing open sets $O^m_n\subseteq N$, such that $\bigcap_{m\in\omega}O^m_n=F_n$ and such that $O_n^{m+1}\subseteq \overline{O_n^{m+1}}\subseteq O_n^{m}$.

Let $E^{(i)}=\bigcap_{n,m\leq i} \overline{O_n^m}$ and $U^{(i)}=\bigcap_{n,m\leq i}O^m_n$. For each $i\in \omega$, $E^{(i)}$ is closed and $U^{(i)}$ is open. Furthermore, $U^{(i+1)}\subseteq E^{(i+1)}\subseteq U^{(i)}$ and
\[\bigcap_{i\in \omega}U^{(i)}=\bigcap_{i\in \omega}E^{(i)}= \bigcap_{n\in \omega}F_n=\emptyset.\]

By definition, $\overline{O^m_0}=O^m_0=E^{(0)}=U^{(0)}=N$. Therefore, there must be an $i_1$ such that $N\setminus U^{(i_1)}$ is not $\sigma$-compact. Otherwise, 
\[N=N\setminus \emptyset= N\setminus \bigcap_{i\in\omega}U^{(i)}= \bigcup_{i\in \omega}N\setminus U^{(i)}\]
would be $\sigma$-compact.

Let $i_0=-1$ and for all $n\in \omega$, $n\geq 1$, define $i_n$ such that $E^{(i_n+1)}\setminus U^{(i_{n+1})}$ is not $\sigma$-compact. This number always exists. To see this, we can use the same argument we used for $N$ with an induction. To simplify notation, we will write $E^{(n)}$ instead of $E^{(i_n)}$ (we substitute for $E^{(i)}$ the subsequence $E^{(i_n)}$).

Now we can define our desired closed sets. For each $n\in \omega$, let $L_n=E^{(i_n+1)}\setminus U^{(i_{n+1})}$. We know that each $L_n$ is closed and not $\sigma$-compact. Furthermore, since $E^{(i+1)}\subseteq U^{(i)}$, we know that they are pairwise disjoint. To finish the proof, let $x\in \overline{\bigcup_{n\in \omega} L_n}$. Since $\bigcap_{i\in \omega} E^{(i)}=\emptyset$ and the sets $E^{(i)}$ are decreasing, there is an $m_x\in \omega$ such that for all $n\geq m_x$, $x\notin E^{(n)}\subseteq E^{(m_x)}$. Let $G_x$ be an open neighborhood of $x$ that is disjoint from $E^{(m_x)}$. Notice that for all $n\geq m_x$, $G_x\,\cap\,L_n=\emptyset$. Then, 
\[G_x\cap \bigcup_{n\in\omega}L_n=G_x \cap \bigcup_{n< m_x}L_n.\]
This means that $x\in \overline{\bigcup_{n< m_x}L_n}=\bigcup_{n< m_x}L_n$. So $x\in \bigcup_{n\in \omega}L_n$ and $\bigcup_{n\in \omega}L_n$ is closed.
\end{proof}

We need some new notation before continuing.

\begin{definition}
Given $N\subseteq Y$, $F: A\rightarrow Y$ a surjective multifunction, and $D\subseteq A$, let $D_N = \{a \in A\,:\,F(a) \cap N \neq \emptyset\}$. Let $F_N: D_N \to N$ be defined by $F_N(a) = F(a) \cap N$.
\end{definition}

Recall that for $s\in \omega^{<\omega}$
\[[s]=\{a\in \omega^{\omega} : s\subseteq a\}\]
is (basic) open and for $T\subseteq \omega^{<\omega}$
\[[T]=\{a\in \omega^{\omega}: (\forall n\in \omega) (a\restrict n\in T)\}\]
denotes the set of all branches of $T$ and is closed.

\begin{definition}
Given $s \in \omega^{<\omega}$, $T\subseteq \omega^{<\omega}$, and $A\subseteq \omega^{\omega}$ we define $[s]_{A}=[s]\cap A$ and $[T]_{A}=[T]\cap A$.
\end{definition}

\begin{definition}
Given $T\subseteq \omega^{<\omega}$ and $s\in \omega^{<\omega}$ we define
$(T)_s$ to be the set of all successor nodes of $s$ that are in $T$.
\end{definition}

The sets $[s]_{A}$ and $[T]_{A}$ define the induced topology on $A$ (see chapter I.2.B of \cite{Kec95}).

\begin{definition}
We say that a family $\{N_s: s\in \omega^{<\omega}\}$ is a \emph{disjoint Souslin scheme} if
\begin{enumerate}
    \item If $s\subseteq t$, $N_t\subseteq N_s$.
    \item If $s\not\subseteq t$ and $t\not\subseteq s$, $N_s\cap N_t=\emptyset$.
\end{enumerate}

Furthermore, we say that a disjoint Souslin scheme is \emph{closed} if
\begin{enumerate}\setcounter{enumi}{2}
    \item For all $s\in \omega^{<\omega}$, $N_s\neq \emptyset$ and $N_s$ is closed.
    \item For each $n\in \omega$, $\bigcup_{s\in \omega^{n}}N_s$ is closed.
\end{enumerate}
\end{definition}

\begin{definition}
Given $A\subseteq \omega^{\omega}$, the \emph{generating tree of $A$} is \[T_{A}=\{s\in \omega^{<\omega}:(\exists b\in A) (\exists n\in \omega) (b\restrict n=s)\}.\]

Given $U\subseteq \omega^{<\omega}$, the \emph{tree generated by $U$} is
\[T_{U}=\{s\in \omega^{<\omega}:(\exists t\in U) (\exists n< |t|) (t\restrict n=s)\}.\]
\end{definition}

Notice that if $A\subseteq B\subseteq \omega^{\omega}$ then $[T_A]_B=A$ if and only if $A$ is closed in $B$.

\begin{lemma}\label{Perfect Souslin}
Assume that $X$ is a perfect Lindel\"of $\Sigma$-space. If $X$ is not $\sigma$-compact, then there is a disjoint closed Souslin scheme such that for every $s\in \omega^{<\omega}$, $N_s$ is a non-empty closed non-$\sigma$-compact subset of $N$.
\end{lemma}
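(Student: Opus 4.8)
The plan is to build the scheme $\{N_s : s \in \omega^{<\omega}\}$ by recursion on the length of $s$, using Lemma \ref{perfect special sequence of closed sets} at each stage to split a given non-$\sigma$-compact closed set into countably many pairwise disjoint non-$\sigma$-compact closed pieces whose union is closed. First I would fix a Lindel\"of $p$-space $P$ and a continuous surjection $\varphi : P \to X$ witnessing that $X$ is a Lindel\"of $\Sigma$-space; equivalently, by Corollary \ref{CIPP and USCCV}, I would fix a surjective USCCV multifunction from a separable metrizable space onto $X$. The point of invoking the $\Sigma$-structure (rather than just perfect Lindel\"ofness) is that closed subspaces of a perfect Lindel\"of $\Sigma$-space are again perfect Lindel\"of $\Sigma$-spaces: perfectness and Lindel\"ofness are closed-hereditary outright, and being a Lindel\"of $\Sigma$-space is closed-hereditary as well (it is a USCCV image of a separable metrizable space, and the argument of the ``closed hereditary'' lemma for $\pmb{\mathscr{K}}(\mathbf{\Sigma}^1_\beta)$ applies verbatim with ``separable metrizable'' in place of ``$\mathbf{\Sigma}^1_\beta$''). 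Hence Lemma \ref{perfect special sequence of closed sets} can be re-applied inside any $N_s$ we have already constructed.

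The recursion runs as follows. Set $N_\emptyset = N$, where $N$ is the closed non-$\sigma$-compact subspace of $X$ produced by Proposition \ref{perfect special not sigma subspace}; note $N$ is perfect, Lindel\"of, a $\Sigma$-space, and not $\sigma$-compact. Given $N_s$ a non-empty closed non-$\sigma$-compact subset (hence itself a perfect Lindel\"of $\Sigma$-space), apply Lemma \ref{perfect special sequence of closed sets} to $N_s$ to obtain pairwise disjoint closed non-$\sigma$-compact sets $L_n^s \subseteq N_s$, $n \in \omega$, with $\bigcup_{n \in \omega} L_n^s$ closed in $N_s$ (hence closed in $X$, since $N_s$ is closed in $X$); put $N_{s\concat n} = L_n^s$. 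Conditions (1) and (2) of a disjoint Souslin scheme follow by an easy induction on the length of the longer index: nesting $N_t \subseteq N_s$ for $s \subseteq t$ is immediate from $N_{s \concat n} \subseteq N_s$, and for incomparable $s, t$ one descends to the last common node $u = s \restrict k = t \restrict k$ where $s(k) \neq t(k)$ and uses that $N_{u \concat s(k)}$ and $N_{u \concat t(k)}$ are disjoint, whence so are the smaller sets $N_s \subseteq N_{u\concat s(k)}$ and $N_t \subseteq N_{u \concat t(k)}$. Condition (3) (each $N_s$ non-empty, closed, and non-$\sigma$-compact) is maintained at every step by construction.

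The one genuinely delicate point — and the step I expect to be the main obstacle — is condition (4): for each $n \in \omega$, $\bigcup_{s \in \omega^n} N_s$ must be closed in $X$. Lemma \ref{perfect special sequence of closed sets} only directly gives that $\bigcup_m N_{s \concat m}$ is closed for each \emph{fixed} $s$; to get that the whole level $\bigcup_{s \in \omega^n} N_s = \bigcup_{s \in \omega^{n-1}} \bigl(\bigcup_m N_{s\concat m}\bigr)$ is closed, I would prove by induction on $n$ that in fact $\bigcup_{s \in \omega^n} N_s$ is a \emph{closed} subset of $X$ and that the partition $\{N_s : s \in \omega^n\}$ of it is locally finite there — i.e. each point of $X$ has a neighborhood meeting only finitely many of the $N_s$ with $|s| = n$. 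The proof of Lemma \ref{perfect special sequence of closed sets} actually delivers this local finiteness for the one-step family $\{L_n^s\}_{n}$ inside $N_s$ (the witnessing neighborhood $G_x$ there meets only $L_n^s$ for $n < m_x$), and local finiteness is preserved under refining each piece of a locally finite closed family by a locally finite closed family, provided we are careful that the refining neighborhoods can be chosen inside the ambient ones; a locally finite union of closed sets is closed, which closes the induction. Once conditions (1)–(4) are verified, the scheme $\{N_s\}$ is a disjoint closed Souslin scheme with every $N_s \subseteq N$ non-empty closed and non-$\sigma$-compact, which is exactly the conclusion.
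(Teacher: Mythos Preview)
Your proposal is correct and follows essentially the same route as the paper: start from the set $N$ of Proposition~\ref{perfect special not sigma subspace}, set $N_\emptyset = N$, and recursively apply Lemma~\ref{perfect special sequence of closed sets} inside each $N_s$ to produce the $N_{s\concat i}$. The paper's own proof is terser; in particular, for condition~(4) it simply asserts that ``using the same properties'' the union $\bigcup_{t\in\omega^{n+1}} N_t$ is closed, whereas you correctly isolate the point that needs work and supply the local-finiteness argument (which is indeed implicit in the proof of Lemma~\ref{perfect special sequence of closed sets}, since every $x$ lies outside some $E^{(m_x)}$ and hence has a neighborhood meeting only finitely many $L_n$). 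Your remark about fixing a USCCV witness is harmless but not actually used in this lemma---the paper also writes down $F:B\to X$ at the outset but makes no use of it until Lemma~\ref{Perfect Souslin and USCCV}.
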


\begin{proof}
Let $X$ be a Lindel\"of $\Sigma$-space, let $B\subseteq \omega^{\omega}$ and let $F:B\rightarrow X$ a surjective USCCV multifunction. Let $N$ be as in Proposition \ref{perfect special not sigma subspace}. For each $s\in \omega^{<\omega}$ we will recursively define $N_s\subseteq N$. First, let $N_{\emptyset}=N$. Now, assume that we have defined $N_s$. Let $L_i$, $i\in\omega$ be the countably many closed sets given by Lemma \ref{perfect special sequence of closed sets} applied to $N_s$. For each $i\in \omega$, let $N_{s^{\frown}i}=L_i$.

To finish the proof, we sketch the induction argument used to show that for each $n\in \omega$, $\bigcup_{s\in \omega^{n}}N_s$ is closed. First, $N_{\emptyset}=N$. Assuming the result for $n$, if $|s|=n$, then, by the properties of $L_i$, $\bigcup_{i\in\omega}N_{s^{\frown}i}\subseteq N_s$ is closed. Then, using the same properties, $\bigcup_{s\in \omega^{n}}\bigcup_{i\in\omega}N_{s^{\frown}i}=\bigcup_{t\in\omega^{n+1}}N_{t} $ is closed.
\end{proof}

It is important to remark that, although we were able to create inside $X$ the sets $N_s$ for each $s\in \omega^{<\omega}$, that does not mean that for every $r\in \omega^{\omega}$ we have that $\bigcap_{s\subseteq r}N_s\neq \emptyset$. This last statement would create a copy of $\omega^{\omega}$ inside $X$, which is not true in general.

\begin{lemma}\label{Nice F_N}
Assume that $X$ is a perfect Lindel\"of $\Sigma$-space and let $B\subseteq \omega^{\omega}$ be such that $X$ is the USCCV image of $B$ by $F$. If $X$ is not $\sigma$-compact, then there is a non-$\sigma$-compact closed subset $N\subseteq X$ and a closed set $A\subseteq B$ such that $F_N\restrict A$ is surjective and the USCCV image of every non-empty open subset of $A$ is non-$\sigma$-compact.
\end{lemma}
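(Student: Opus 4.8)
The plan is to take the closed disjoint Souslin scheme $\{N_s : s\in\omega^{<\omega}\}$ supplied by Lemma~\ref{Perfect Souslin} and read it back into the domain $B\subseteq\omega^\omega$ through $F$. First I would set $N = N_\emptyset$, the non-$\sigma$-compact closed subset of $X$ from Proposition~\ref{perfect special not sigma subspace}, and pass to $D_N = \{b\in B : F(b)\cap N\neq\emptyset\}$ with the multifunction $F_N : D_N\to N$, $F_N(b)=F(b)\cap N$. Since $N$ is closed in $X$ and $F$ is USCCV, $D_N = F^{-1}(N)$ is closed in $B$, and $F_N$ is again USCCV and surjective onto $N$ (the point-values $F(b)\cap N$ are compact, being closed subsets of the compact sets $F(b)$). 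So the natural candidate for $A$ is a suitable closed subset of $D_N$; the work is to shrink $D_N$ so that every non-empty relatively open piece still maps onto something non-$\sigma$-compact.

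The key construction is to build, by recursion on $s\in\omega^{<\omega}$, closed sets $A_s\subseteq D_N$ with $A_\emptyset = D_N$, such that $A_s = F_N^{-1}(N_s)\cap(\text{something already fixed})$, arranged so that (i) $F_N\restrict A_s$ surjects onto $N_s$, (ii) $A_t\subseteq A_s$ for $s\subseteq t$, (iii) the $A_t$ with $|t|=n$ are pairwise disjoint with closed union, and (iv) for each $s$ and each basic clopen $[s']_{D_N}$ meeting $A_s$ nontrivially, $F_N([s']_{D_N}\cap A_s)$ is non-$\sigma$-compact. The disjointness and closed-union conditions are inherited from the disjoint closed Souslin scheme downstairs once we note that $F_N^{-1}$ of a disjoint family is disjoint and that, because $F_N$ is a closed USCCV multifunction with compact point-values (hence its outer inverse behaves like a perfect preimage map on $N$, cf.\ Propositions~\ref{Whyburn results} and \ref{Perfect inverse of perfect}), the outer preimage of a closed set with closed union again has closed union. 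The final set $A$ is then obtained as $\bigcup_{n}\bigcup_{|s|=n}A_s$ suitably intersected, or more cleanly, one takes $A$ to be the closure in $D_N$ of $\bigcup_{s} (\text{chosen branch representatives})$; concretely I would let $A = \bigcap_n \bigcup_{|s|=n} A_s$, which is closed, and verify that $F_N\restrict A$ is still surjective onto $N$ because every point $x\in N$ lies in some $N_s$ at each level and the preimages are nested.

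The heart of the non-$\sigma$-compactness clause is this: if some non-empty relatively open $[s']\cap A$ had $\sigma$-compact $F_N$-image, then since $A$ is Lindel\"of, $A$ is covered by countably many such basic open sets together with their complements' pieces, and $N = F_N(A)$ would be a countable union of $\sigma$-compact sets plus the images of finitely many $N_s$'s — but each $N_s$ is non-$\sigma$-compact, and more importantly, the open-subset property of $N$ from Proposition~\ref{perfect special not sigma subspace} says that no open subset of $N$ is $\sigma$-compact, giving the contradiction. To make this precise I would imitate the argument inside Lemma~\ref{perfect special sequence of closed sets} where ``$N = N\setminus\bigcap_i U^{(i)} = \bigcup_i (N\setminus U^{(i)})$ would be $\sigma$-compact'' is used to force the existence of non-$\sigma$-compact closed pieces.

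The main obstacle I expect is controlling the interaction between the clopen base $\{[s']_{D_N}\}$ of the domain $A$ and the Souslin scheme $\{N_s\}$ of the range: there is no reason the combinatorial tree structure on $\omega^{<\omega}\subseteq\mathrm{dom}$ aligns with the scheme indices, so one cannot simply declare $A_s = F_N^{-1}(N_s)\cap[s]$. The fix is to do a book-keeping recursion that, at stage $s$, looks at the (countably many) basic clopen subsets of the current piece $A_s$, and for each one either confirms its image is non-$\sigma$-compact or, using Lemma~\ref{perfect special sequence of closed sets} applied to that image, refines $N_s$ further and correspondingly thins $A_s$ to $F_N^{-1}$ of the refinement — iterating this countably often and diagonalizing. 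Verifying that the limit object $A$ is closed, that $F_N\restrict A$ stays surjective after all the thinning, and that the diagonalization really kills every bad basic open set simultaneously is the delicate bookkeeping, but it is a standard fusion-type argument once the closed-hereditary and closed-union properties of USCCV outer preimages are in hand.
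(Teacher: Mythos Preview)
Your proposal is much more elaborate than what the lemma actually requires, and the simpler argument the paper uses bypasses exactly the ``main obstacle'' you anticipate. The paper does not use the Souslin scheme from Lemma~\ref{Perfect Souslin} at all here. Instead, it takes $N$ from Proposition~\ref{perfect special not sigma subspace}, sets $A_0 = F^{-1}(N)$ (closed in $B$ since $F$ is USCCV), and then observes the crucial fact you never exploit: because $B\subseteq\omega^\omega$ is second countable, there are only \emph{countably many} basic open subsets of $A_0$, so the collection $\mathcal{O}$ of those whose $F_N$-image is $\sigma$-compact is countable. One then passes in a single step to $A = A_0\setminus\bigcup\mathcal{O}$, which is closed, and to (essentially) $N\setminus F_N(\bigcup\mathcal{O})$; since $F_N(\bigcup\mathcal{O})$ is a countable union of $\sigma$-compact sets and $N$ has no non-empty relatively open $\sigma$-compact subset, both pieces remain non-empty and non-$\sigma$-compact. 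After this single trimming, any basic open subset of $A$ was already not in $\mathcal{O}$, so its image is non-$\sigma$-compact.

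Your approach tries to achieve the open-set condition as a by-product of a recursive thinning along a Souslin scheme on the \emph{range}, and you correctly notice that the tree structure on the domain need not line up with the scheme on $N$; you then propose a fusion to repair this. That fusion might be made to work, but your middle paragraph's contradiction argument (``one bad open $\Rightarrow$ $N$ is $\sigma$-compact'') is not valid as written, and the surjectivity of $F_N\restrict A$ after infinitely many thinnings is genuinely delicate. All of this machinery is avoided by the one-shot removal of the countably many bad basic opens. The Souslin scheme is only brought in later, in Lemma~\ref{Perfect Souslin and USCCV}, \emph{after} this lemma has already provided an $A$ with the uniform non-$\sigma$-compactness property.
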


\begin{proof}
Let $X$ be a Lindel\"of $\Sigma$-space, let $B\subseteq \omega^{\omega}$ and let $F:B\rightarrow X$ be a surjective USCCV multifunction. Let $N\subseteq X$ be the closed non-$\sigma$-compact subspace coming from Proposition \ref{perfect special not sigma subspace}. Since USCCV multifunctions preserve compactness,  $A=F^{-1}(N)\subseteq B$ is a closed subset of $B$ that is not $\sigma$-compact. Furthermore, there are only countably many basic open sets $O\subseteq A$ such that $F(O)\cap N$ is $\sigma$-compact. If $\mathcal{O}$ is the collection of all basic open subsets of $A$ such that their images are $\sigma$-compact in $N$, then $A\setminus \bigcup \mathcal{O}$ and $N\setminus F(\bigcup \mathcal{O})$ are non-empty and non-$\sigma$ compact. With this, we can assume that for any open subset of $A$, the image of that open subset is not $\sigma$-compact. Finally, let $F_N:A\rightarrow N$ be such that $F_N(b)=F(b)\cap N$ for all $b\in A$.
\end{proof}

\begin{lemma}\label{Perfect Souslin and USCCV}
Assume that $X$ is a perfect Lindel\"of $\Sigma$-space and let $B\subseteq \omega^{\omega}$ be such that $X$ is the USCCV image of $B$. If $X$ is not $\sigma$-compact, then there is a non-$\sigma$-compact closed subset $A$ of $B$, a $g:\omega^{<\omega}\rightarrow T_{A}$, and a disjoint closed Souslin scheme such that:
\begin{enumerate}
    \item For every $s\in \omega^{<\omega}$, $(T_{g(\omega^{<\omega})})_{g(s)}$ is infinite and $\overline{F_N([g(s)]_{A})}\cap N_s$ is a closed non-empty non-$\sigma$-compact subset of $N$.
    \item $[T_{g(\omega^{<\omega})}]_{A}$ is a closed non-empty non-$\sigma$-compact subset of $A$.
    \item If $b\in [T_{g(\omega^{<\omega})}]_{A}$ and $g(s)\subseteq b$ then $F_N(b)\cap N_s\neq \emptyset$.
\end{enumerate}
\end{lemma}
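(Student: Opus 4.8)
The plan is to build the map $g:\omega^{<\omega}\to T_A$ and the Souslin scheme $\{N_s\}$ simultaneously by recursion on the length of $s$, combining the disjoint closed Souslin scheme of Lemma \ref{Perfect Souslin} with the ``every open image is non-$\sigma$-compact'' feature of Lemma \ref{Nice F_N}. First I would apply Lemma \ref{Nice F_N} to obtain the non-$\sigma$-compact closed $N\subseteq X$ and the closed $A\subseteq B$ with $F_N\restrict A$ surjective onto $N$ and the image of every non-empty open subset of $A$ non-$\sigma$-compact; this $A$ and $N$ will be the ones in the statement. Then I would apply Lemma \ref{Perfect Souslin} inside $N$ (which is itself a perfect Lindel\"of $\Sigma$-space, being a closed subspace of one) to get a disjoint closed Souslin scheme $\{N_s : s\in\omega^{<\omega}\}$ of non-empty, closed, non-$\sigma$-compact subsets of $N$. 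These $N_s$ are almost the ones we want, except we must ensure compatibility with the generating tree of $A$ via $g$.

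The recursion constructs $g$ and simultaneously shrinks the scheme. Set $g(\emptyset)=\emptyset$ (the root of $T_A$) and $N_\emptyset = N$. Suppose $g(s)$ has been defined, with $[g(s)]_A$ a clopen-in-$A$ neighbourhood whose $F_N$-image is non-$\sigma$-compact and meets $N_s$ in a non-$\sigma$-compact set — more precisely, maintaining the invariant that $\overline{F_N([g(s)]_A)}\cap N_s$ is closed non-empty non-$\sigma$-compact. Since $F_N\restrict [g(s)]_A$ is a USCCV multifunction from a perfect Lindel\"of $\Sigma$-space onto a set containing a non-$\sigma$-compact closed piece of $N_s$, I can split: because $\overline{F_N([g(s)]_A)}\cap N_s$ is non-$\sigma$-compact and perfect, apply Lemma \ref{perfect special sequence of closed sets} (or re-run the argument of Lemma \ref{Perfect Souslin}) to get pairwise disjoint non-$\sigma$-compact closed $N_{s^\frown i}\subseteq \overline{F_N([g(s)]_A)}\cap N_s$ with closed union. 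For each $i$, the outer preimage $(F_N\restrict [g(s)]_A)^{-1}(N_{s^\frown i})$ is a closed non-$\sigma$-compact subset of $[g(s)]_A$; since the $N_s$-image of every non-empty open subset is non-$\sigma$-compact, I can find a basic clopen $[g(s^\frown i)]_A\subseteq [g(s)]_A$ with $g(s^\frown i)$ a proper extension of $g(s)$ in $T_A$, chosen (using Lemma \ref{Nice F_N}'s thinning once more, or a counting argument on the countably many ``bad'' basic open sets) so that $\overline{F_N([g(s^\frown i)]_A)}\cap N_{s^\frown i}$ is still non-$\sigma$-compact, and so that the nodes $g(s^\frown i)$ are pairwise incomparable of length strictly greater than $|g(s)|$; then $(T_{g(\omega^{<\omega})})_{g(s)}$ becomes infinite. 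Relabel $N_{s^\frown i}\cap \overline{F_N([g(s^\frown i)]_A)}$ as the new $N_{s^\frown i}$ so the invariant in (1) holds verbatim.

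Once the recursion is complete, (1) is the maintained invariant; for (2) note $[T_{g(\omega^{<\omega})}]_A = \bigcap_n \bigcup_{|s|=n}[g(s)]_A$, a countable intersection of closed sets in $A$, hence closed, and it is non-$\sigma$-compact because it surjects (under $F_N$) onto $\bigcap_n \bigcup_{|s|=n} N_s$ — wait, more carefully, because it contains the closed non-$\sigma$-compact set obtained by the standard ``fusion'' argument: at each level the union $\bigcup_{|s|=n}[g(s)]_A$ is closed and non-$\sigma$-compact (closed by the disjoint-closed-union property transferred through $g$, non-$\sigma$-compact since it contains $[g(s)]_A$ for a single bad $s$), and a decreasing sequence of closed non-$\sigma$-compact subsets of a Lindel\"of space cannot have $\sigma$-compact intersection by the Baire-type argument already used in Lemma \ref{perfect special sequence of closed sets}. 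For (3): if $b\in [T_{g(\omega^{<\omega})}]_A$ and $g(s)\subseteq b$, then $b\in [g(s)]_A$, so $F_N(b)\subseteq F_N([g(s)]_A)$, and since the invariant pins $N_s$ inside $\overline{F_N([g(s)]_A)}$ and $F_N$ is USCCV (hence $F_N(b)$ is compact and the closure does not introduce spurious separation), a compactness/$\gamma$-filtration argument gives $F_N(b)\cap N_s\neq\emptyset$; the cleanest route is to observe that $\bigcap_{g(s)\subseteq b} F_N([g(s)]_A)$ and upper semicontinuity force $F_N(b)$ to meet each $\overline{F_N([g(s)]_A)}$, then intersect with $N_s$ using disjointness of the scheme.

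The main obstacle I expect is step (3) — arranging the construction so that points of the branch space $[T_{g(\omega^{<\omega})}]_A$ actually ``see'' the nested $N_s$'s. Disjointness of the Souslin scheme together with upper semicontinuity of $F_N$ is what makes it work: if $F_N(b)$ missed some $N_s$ with $g(s)\subseteq b$, then by upper semicontinuity there would be a neighbourhood of $b$ in $A$ whose $F_N$-image avoids $N_s$, contradicting that every open subimage of $[g(s)]_A$ is non-$\sigma$-compact while $N_s$ carries the non-$\sigma$-compactness. Making this precise, and simultaneously keeping $\overline{F_N([g(s)]_A)}\cap N_s$ non-$\sigma$-compact at every stage (so that the recursion does not terminate), is the delicate bookkeeping; everything else is a routine fusion argument combining Lemmas \ref{perfect special sequence of closed sets}, \ref{Perfect Souslin}, and \ref{Nice F_N}.
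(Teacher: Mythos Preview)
Your recursive construction is essentially the paper's: start from $N,A$ given by Lemma~\ref{Nice F_N}, and at each stage apply Lemma~\ref{perfect special sequence of closed sets} to $\overline{F_N([g(s)]_A)}\cap N_s$ to split into $L_i$'s, then choose proper extensions $g(s^\frown i)$ of $g(s)$ whose images still meet $L_i$ in a non-$\sigma$-compact set. The paper is a bit more explicit than you are about \emph{why} infinitely many pairwise-incompatible extensions exist at each step (it argues that only finitely many immediate successors $t$ of $g(s)$ in $T_A$ can have $\overline{F_N([t]_A)}\cap\bigcup_i L_i$ $\sigma$-compact, else one reapplies Lemma~\ref{perfect special sequence of closed sets}), but your sketch is on the right track there.

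The genuine gap is your argument for (2). The claim ``a decreasing sequence of closed non-$\sigma$-compact subsets of a Lindel\"of space cannot have $\sigma$-compact intersection'' is simply false: in $\omega^{\omega}$ take $C_n=\{f:f\restrict n=\mathbf{0}\restrict n\}$; each $C_n$ is closed and homeomorphic to $\omega^{\omega}$, yet $\bigcap_n C_n=\{\mathbf{0}\}$. No Baire-type argument rescues this, and Lemma~\ref{perfect special sequence of closed sets} contains nothing of the sort. The paper proves (2) by an entirely different route that \emph{uses} the infinite-branching clause of (1): if $[T_{g(\omega^{<\omega})}]_A$ were $\sigma$-compact it would be bounded by some $f\in\omega^{\omega}$ (modulo finite), but since $(T_{g(\omega^{<\omega})})_{g(s)}$ is infinite for every $s$, one can recursively build a branch $b$ that exceeds $f$ infinitely often. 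So the infinite branching is not decorative\textemdash it is precisely what makes (2) go through, and your proof never invokes it.

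Your sketch for (3) is also not convincing: ``every open subimage is non-$\sigma$-compact while $N_s$ carries the non-$\sigma$-compactness'' does not force the image of a single point to meet $N_s$; an open set can have non-$\sigma$-compact image disjoint from $N_s$. The construction itself (nesting $N_{s^\frown i}\subseteq \overline{F_N([t_i]_A)}\cap L_i$ with $t_i\supseteq g(s)$) is what delivers (3), not an after-the-fact USCCV argument.
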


\begin{proof}
Let $X$ be a Lindel\"of $\Sigma$-space, let $B\subseteq \omega^{\omega}$ and let $F:B\rightarrow X$ be a surjective USCCV multifunction. Furthermore, let $N$ and $A$ be as in Lemma \ref{Nice F_N}.

We know that all compact sets of $\omega^{\omega}$ are the branches of finite branching trees \cite{Kec95}. Since no open subset of $A$ is $\sigma$-compact, we know that for any $s\in A$ there is a $t\in A$, with $s\subseteq t$, such that $(T_A)_{t}=\{i\in \omega: t^{\frown}i\in A\}$ is infinite. 

For each $s\in \omega^{<\omega}$ we will recursively define $N_s\subseteq N$ in a similar way to what we did in the proof of Lemma \ref{Perfect Souslin}. As before, let $N_{\emptyset}=N$. Now, assume that we have defined $N_s$ and $g(s)\in A$ for $s\in \omega^{<\omega}$. Let $L_i$ be the countably many closed sets given by Lemma \ref{perfect special sequence of closed sets} applied to $ \overline{F_N([g(s)]_{A})}\cap N_s$. We will find $\{t_i: i\in \omega\}\subseteq (T_A)_{g(s)}$ such that if $i\neq j$, $t_i\neq t_j$, and such that $\overline{F_N([t_i]_{A})}\,\cap\, L_i$ is non-empty and non-$\sigma$-compact. 

For each $i\in \omega$,
\[L_i=L_i\cap F_N([g(s)])=L_i\cap \bigcup_{t\in (T_A)_{g(s)}}F_N([t]_{A})=\bigcup_{t\in (T_A)_{g(s)}}F_N([t]_{A})\cap L_i.\]
If $F_N([t]_{A})\cap L_i$ were $\sigma$-compact for all $t\in (T_A)_{g(s)}$, then $L_i$ would be $\sigma$-compact. On the other hand, if there are only finitely many $t\in (T_A)_{g(s)}$ such that $\overline{F_N([t_i]_{A})}\,\cap\,\bigcup_{i\in \omega}L_i$ is non-empty (and non-$\sigma$-compact), then $F_N([g(s)])\setminus \bigcup_{i\in \omega}L_i=\bigcup_{t\in (T_A)_{g(s)}}F_N([t]_{A})\setminus \bigcup_{i\in \omega}L_i$ is non-empty and non-$\sigma$-compact. So we can use Lemma \ref{perfect special sequence of closed sets} again. In this manner we can find the desired $\{t_i: i\in \omega\}\subseteq (T_A)_{g(s)}$. Furthermore, notice that we will still have the property of the union of closed sets being closed, since the proof only requires that the $L_i$ form a countable collection of closed sets with empty intersection.

For each $i\in \omega$, let $g(s^{\frown}i)\in T_A$ be such that $t_i\subseteq g(s^{\frown}i)$ and $(T_A)_{g(s^{\frown}i)}$ is infinite. Finally, define $N_{s^{\frown}i}\subseteq \overline{F_N([t_i]_{A})}\cap L_i$ to be the closed set given by Proposition \ref{perfect special not sigma subspace}. Remember that $N_{s^{\frown}i}$ is obtained from $\overline{F_N([t_i]_{A})}\cap L_i$ by subtracting a $\sigma$-compact open set, and $F_N([s]_{A})$ is non-$\sigma$-compact for all $s\in A$. With this, elements from the proof of Lemma \ref{Perfect Souslin}, and the above construction, we can verify that the sets $N_s$ form a disjoint closed Souslin scheme that satisfies points 1) and 3). To finish the proof, we will show point 2) by contradiction.

Assume that $[T_{g(\omega^{<\omega})}]_{A}$ is $\sigma$-compact. Then there is an $f\in \omega^{\omega}$ such that for all $b\in [T_{g(\omega^{<\omega})}]_{A}$ there is an $n_{b}\in \omega^{\omega}$ such that $b(m)<f(m)$ for all $m\geq n_b$. Take $a\in [T_{g(\omega^{<\omega})}]_{A}$ such that $a(n_b)<f(m)$; by construction, there is an $s_a\in \omega^{<\omega}$ such that $a\subseteq s_a$ and $(T_{g(\omega^{<\omega})})_{s_a}$ is infinite. This means that we can find a $b_f\in [T_{g(\omega^{<\omega})}]_{A}$ such that $b_f\restrict |s_a|+1\in (T_{g(\omega^{<\omega})})_{s_a}$ and $b_f(|s_n|)>f(|s_n|)$. This contradicts the assumption of $[T_{g(\omega^{<\omega})}]_{A}$ being $\sigma$-compact.
\end{proof}

\begin{theorem}\label{perfect S-Lind closed preimage}
Assume that $X$ is a perfect Lindel\"of $\Sigma$-space and let $B\subseteq \omega^{\omega}$ be such that $X$ is the USCCV image of $B$. If $X$ is not $\sigma$-compact, then there is a non-$\sigma$-compact closed subset of $X$ that is the perfect preimage of a closed subset of $B$.
\end{theorem}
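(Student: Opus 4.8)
The plan is to use the machinery of Lemma \ref{Perfect Souslin and USCCV} to extract, inside $X$, a closed non-$\sigma$-compact set that is genuinely a continuous (in fact perfect) image of a closed subset of $\omega^\omega$, intersected with $B$. Concretely, start with $X$ a perfect Lindel\"of $\Sigma$-space that is not $\sigma$-compact, fix $B \subseteq \omega^\omega$ and a surjective USCCV $F : B \to X$, and apply Lemma \ref{Perfect Souslin and USCCV} to obtain the closed non-$\sigma$-compact $A \subseteq B$, the map $g : \omega^{<\omega} \to T_A$, and the disjoint closed Souslin scheme $\{N_s : s \in \omega^{<\omega}\}$ satisfying points 1)--3). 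Set $A' = [T_{g(\omega^{<\omega})}]_A$, which by point 2) is a closed non-$\sigma$-compact subset of $A$ (hence closed in $B$), and let $N = N_\emptyset$, the closed non-$\sigma$-compact subset of $X$ supplied by Proposition \ref{perfect special not sigma subspace}.

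Next I would identify the target closed subset of $X$. The natural candidate is
\[
M = \bigcap_{n \in \omega} \ \bigcup_{s \in \omega^n} \bigl(\overline{F_N([g(s)]_A)} \cap N_s\bigr),
\]
or, more to the point, the set $M = F_N(A') \cap N$ together with its closure behavior; the key is that each ``level union'' $\bigcup_{s \in \omega^n} N_s$ is closed (property (4) of a closed Souslin scheme), each intersected piece is non-$\sigma$-compact by point 1), and the nesting from properties (1)--(2) of the Souslin scheme forces the branches to organize $M$ as a continuous image of $A'$. Precisely, I would define $h : A' \to X$ by $h(b) = $ the unique point of $F_N(b) \cap \bigcap_{g(s) \subseteq b} N_s$ — or, if uniqueness fails, pass to the graph construction à la Proposition \ref{USCCV graphs} to replace the multifunction $b \mapsto F_N(b)$ by a single-valued perfect map after restricting. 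The disjointness of the Souslin scheme (property (2)) is what collapses the fibers: if $g(s) \subseteq b$ and $g(t) \subseteq b$ then $s, t$ are comparable, so the sets $N_s, N_t$ are nested and the intersection over the branch is a decreasing intersection of closed sets meeting the compact set $F_N(b)$, hence non-empty; uniqueness then comes from point 3) pinning each such intersection down to within $N_s$ for arbitrarily long $s$, combined with the fact (noted after Lemma \ref{Perfect Souslin}) that we do not get a full copy of $\omega^\omega$ — the branch intersections can be single points.

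The main obstacle, and where I would spend most of the care, is establishing that the resulting map from a closed subset of $B$ onto the chosen non-$\sigma$-compact closed subset of $X$ is \emph{perfect}, not merely continuous, and that its image is actually closed in $X$ and not $\sigma$-compact. For perfectness: $A'$ is closed in $\omega^\omega$, point-inverses should be handled by intersecting the closed $F_N$-graph over $A'$ with the Souslin-scheme constraints and invoking Proposition \ref{USCCV perfect graphs} / Proposition \ref{Perfect inverse of perfect} to see the relevant projections are perfect, using that $F_N(b)$ is compact for each $b$. For closedness and non-$\sigma$-compactness of the image: closedness follows because perfect maps send closed sets to closed sets and $A'$ is closed; non-$\sigma$-compactness follows because a $\sigma$-compact image would, via the perfect (hence compact-fiber) preimage map, force $A'$ to be $\sigma$-compact — contradicting point 2) of Lemma \ref{Perfect Souslin and USCCV}, since perfect preimages of $\sigma$-compact sets are $\sigma$-compact. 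Assembling these pieces — choosing the image set $N^* \subseteq X$ explicitly as the perfect image of $A'$ under the single-valued map distilled from $F_N$, checking the perfect-map bookkeeping, and transferring non-$\sigma$-compactness along the perfect preimage — is the content of the theorem, and the delicate point is precisely making the multifunction $F_N$ restricted to $A'$ behave like a perfect \emph{function} onto its image via the graph trick while retaining all the non-$\sigma$-compactness guarantees from the Souslin scheme.
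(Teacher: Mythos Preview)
Your overall strategy---invoke Lemma \ref{Perfect Souslin and USCCV}, use the closed Souslin scheme $\{N_s\}$ and the tree map $g$, and extract a closed subset of $X$ tied to $A' = [T_{g(\omega^{<\omega})}]_A$---matches the paper. The target set is indeed $N' = \bigcap_{n} \bigcup_{s \in \omega^n} N_s$, and for $b \in A'$ the relevant object is $H(b) = F_N(b) \cap \bigcap_{g(s)\subseteq b} N_s$, a nonempty compact set.

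The gap is the \emph{direction} of the single-valued perfect map. You try to make $A' \to N'$ single-valued by arguing that each $H(b)$ is a singleton; but nothing in the construction forces this. The remark after Lemma \ref{Perfect Souslin} says only that $\bigcap_{s \subseteq r} N_s$ may be \emph{empty}, not that it is a singleton when nonempty, and point 3) of Lemma \ref{Perfect Souslin and USCCV} merely guarantees $H(b) \neq \emptyset$. Your fallback graph trick would at best exhibit $N'$ as a continuous image of a perfect preimage of $A'$, which is strictly weaker than the assertion that $N'$ is itself a perfect preimage of $A'$; and ``$N^*$ is the perfect image of $A'$'', which is what you actually write down, is the wrong direction altogether.

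What the disjointness of the Souslin scheme \emph{does} buy is that $b \neq b'$ implies $H(b) \cap H(b') = \emptyset$: the \emph{inverse} $H^{-1}: N' \to A'$ is single-valued. The paper then checks directly that $H$ is a closed USCCV multifunction (closedness uses that each level union $\bigcup_{s \in \omega^n} N_s$ is closed; upper semi-continuity uses that for any open $U \supseteq H(b)$ some $N_{s}$ with $g(s) \subseteq b$ already lies in $U$). It follows that $H^{-1}$ is a continuous, closed, compact-fibered surjection $N' \to A'$, i.e.\ perfect. Your non-$\sigma$-compactness argument then works verbatim with the arrow reversed: were $N'$ $\sigma$-compact, its perfect image $A'$ would be too, contradicting point 2) of Lemma \ref{Perfect Souslin and USCCV}.
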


\begin{proof}
Let $X$ be a Lindel\"of $\Sigma$-space, let $B\subseteq \omega^{\omega}$ and let $F:B\rightarrow X$ be a surjective USCCV multifunction. Let $N$, $g$, $A$ and $N_s$ for each $s\in \omega^{<\omega}$ be the sets and functions coming from Lemma \ref{Perfect Souslin and USCCV}. 

For each $b\in [T_{g(\omega^{<\omega})}]_{A}$, let $H(b)=\bigcap_{g(s)\subseteq b}N_s\cap F_N(b)$. Since $N_s$ is closed, $\{N_s: g(s)\subseteq b\}$ is nested, and $F_N(b)$ is compact, we know that $H(b)$ is a non-empty compact set. Also, notice that 
\[N'=\{H(b): b\in [T_{g(\omega^{<\omega})}]_{A}\}=\bigcap_{n\in \omega}\bigcup_{s\in\omega^{n}}N_{s} \] is closed. Then, to finish the proof, we need to show that 
\[H: [T_{g(\omega^{<\omega})}]_{A}\rightarrow N'\]
is a USCCV closed multifunction and that $H^{-1}$ is single-valued. With this, we will have that $H^{-1}$ is a perfect function.

First, if $a,b\in [T_{g(\omega^{<\omega})}]_{A}$ and $a\neq b$, we have that $H(a)\cap H(b)=\emptyset$. This shows that $H^{-1}$ is single-valued. Also, if $J\subseteq [T_{g(\omega^{<\omega})}]_{A}$ is closed, then $J=[V]_{[T_{g(\omega^{<\omega})}]_{A}}$ for some subtree $V$ of $\omega^{<\omega}$. By Lemma \ref{Perfect Souslin}, we know that $\bigcup_{s\in \omega^{n}\cap T}N_{s}$ is closed for all $n\in \omega$. Therefore, 
\[H(J)=H([V]_{[T_{g(\omega^{<\omega})}]_{A}})=\bigcap_{n\in\omega}\bigcup_{s\in \omega^{n}\cap V}N_{s}\]
is closed. This shows that $H$ is a closed multifunction.

We know that $H(b)$ is compact for each $b\in [T_{g(\omega^{<\omega})}]_{A}$. Let $U\subseteq [T_{g(\omega^{<\omega})}]_{A}$ be open and let $H(b)\subseteq U$ for some $b\in [T_{g(\omega^{<\omega})}]_{A}$. Notice that, since $\bigcap_{g(s)\subseteq b}N_s\cap F_N(b) = H(b)$, there must be a $s_{b, U}\in \omega^{<\omega}$ such that $g(s_{b,U})\subseteq b$ and $N_{s_{b,U}}\subseteq U$. Then, 
\[b\in [g(s_{b,U})]_{[T_{g(\omega^{<\omega})}]_{A}}\subseteq H^{-1}_{in}(N_{s_{b,U}})\subseteq H^{-1}_{in}(U).\]
This shows that $H^{-1}_{in}(U)$ is open and so $H$ is USCCV.
\end{proof}

\cite{TT} proved that $K$-analytic perfect Menger spaces are $\sigma$-compact; we will extend that result here.

\begin{theorem}\label{perfect HD}
(Hurewicz Dichotomy) $\sigma$-PD implies that every perfect space in $\pmb{\mathscr{K}}$ that is not $\sigma$-compact has a closed subset that is a perfect preimage of $\omega^{\omega}$.
\end{theorem}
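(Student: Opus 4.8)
The plan is to combine Theorem \ref{perfect S-Lind closed preimage} with the Hurewicz dichotomy for $\sigma$-projective subsets of $\omega^{\omega}$; the latter is a consequence of $\sigma$-PD and is the only place the hypothesis is used.

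First I would fix a perfect space $X\in\pmb{\mathscr{K}}$ that is not $\sigma$-compact and put it in a convenient form. Since $X$ is a USCCV image of some $\mathbf{\Sigma}^1_{\xi}$ set $D$, embedding $D$ in the Hilbert cube and pulling it back along the coordinatewise binary-expansion surjection $2^{\omega}\to[0,1]^{\omega}$ (which is perfect; the preimage of a $\mathbf{\Sigma}^1_{\xi}$ set under a continuous map is again $\mathbf{\Sigma}^1_{\xi}$, cf.\ Theorem \ref{USCCV preimages U}) produces a $\sigma$-projective $B\subseteq 2^{\omega}\subseteq\omega^{\omega}$ with $X$ a USCCV image of $B$, obtained by composing $F\colon D\to X$ with the perfect, hence USCCV, restriction $B\to D$. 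Since $\pmb{\mathscr{K}}$ lies between the $K$-analytic and the Lindel\"of $\Sigma$-spaces, $X$ is a perfect Lindel\"of $\Sigma$-space, so Theorem \ref{perfect S-Lind closed preimage} applies and yields a closed, non-$\sigma$-compact $Z\subseteq X$ and a perfect surjection $p\colon Z\to A'$ onto a closed set $A'\subseteq B$.

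The crucial point is that $A'$ is not contained in any $\sigma$-compact subset of $\omega^{\omega}$, not merely non-$\sigma$-compact. Inspecting the proof of Theorem \ref{perfect S-Lind closed preimage}, $A'$ is the set $[T_{g(\omega^{<\omega})}]_{A}$ of Lemma \ref{Perfect Souslin and USCCV}, and the argument for part (2) of that lemma in fact shows $A'$ is not $\leq^{*}$-dominated by any single $f\in\omega^{\omega}$; since a subset of $\omega^{\omega}$ lies in a $K_{\sigma}$ exactly when it is $\leq^{*}$-dominated by one function, $A'$ is not $\sigma$-bounded. Moreover $A'$, being closed in the $\sigma$-projective set $B$, is itself $\sigma$-projective. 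Now $\sigma$-PD yields the Hurewicz dichotomy for the $\sigma$-projective pointclass --- the analogue, for $\sigma$-projective sets, of the classical Hurewicz--Saint-Raymond theorem for analytic sets; see \cite{Kec95} and the determinacy-based results cited in Section \ref{Section intro} --- so the non-$\sigma$-bounded $\sigma$-projective set $A'$ contains a set $C$ that is closed in $\omega^{\omega}$, hence in $A'$, and homeomorphic to $\omega^{\omega}$.

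Finally I would pull $C$ back through $p$. As $C$ is closed in $A'$ and $p$ is continuous and surjective, $p^{-1}(C)$ is a non-empty closed subset of $Z$, hence a closed subset of $X$ because $Z$ is closed in $X$. By Proposition \ref{perfect restriction}, $p\restrict p^{-1}(C)\colon p^{-1}(C)\to C$ is perfect, so $p^{-1}(C)$ is a perfect preimage of $C\cong\omega^{\omega}$, which is exactly the required closed subset of $X$. I expect the main obstacle to be the third step: Theorem \ref{perfect S-Lind closed preimage} asserts only that $Z$, not $A'$, is non-$\sigma$-compact, so one must return to its proof to extract that $A'$ is genuinely unbounded in $\omega^{\omega}$ --- without this the Hurewicz dichotomy says nothing --- and one needs that dichotomy at the full $\sigma$-projective level, which is precisely where $\sigma$-PD, rather than ZFC, is needed.
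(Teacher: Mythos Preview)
Your proof is correct and follows the same overall architecture as the paper's: apply Theorem \ref{perfect S-Lind closed preimage} to get a perfect surjection $p\colon Z\to A'$ from a closed non-$\sigma$-compact $Z\subseteq X$ onto a closed $A'\subseteq B$, invoke a determinacy-based Hurewicz dichotomy to find a closed copy $W\cong\omega^{\omega}$ inside $A'$, then pull $W$ back along $p$ via Proposition \ref{perfect restriction}.

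The one place you diverge from the paper is the detour you flag as ``the main obstacle.'' You return to the proof of Lemma \ref{Perfect Souslin and USCCV} to verify that $A'=[T_{g(\omega^{<\omega})}]_A$ is not $\leq^{*}$-bounded, because you want an \emph{absolutely} closed copy of $\omega^{\omega}$ from the classical form of the dichotomy. The paper avoids this entirely. First, $A'$ is not $\sigma$-compact for a trivial reason you overlooked: $p\colon Z\to A'$ is perfect and surjective, so if $A'$ were $\sigma$-compact then $Z=p^{-1}(A')$ would be too. Second, the paper cites the determinacy result in the form (\cite{Ke77}) that yields a \emph{relatively} closed copy of $\omega^{\omega}$ in any non-$\sigma$-compact set in the pointclass; a relatively closed $W\subseteq A'$ is all that is needed, since $p^{-1}(W)$ is then closed in $Z$ and hence in $X$. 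So there is no need to establish non-$\sigma$-boundedness or to reopen the proof of Theorem \ref{perfect S-Lind closed preimage}. Your first paragraph's reduction to $B\subseteq\omega^{\omega}$ is also more elaborate than necessary, but harmless.
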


\begin{proof}
Let $X$ be a perfect space in $\pmb{\mathscr{K}}$ that is not $\sigma$-compact, $B\subseteq \omega^{\omega}$ a $\sigma$-projective set, and $F:B\rightarrow X$ a USCCV surjective multifunction. By Theorem \ref{perfect S-Lind closed preimage}, we can find a non-$\sigma$-compact closed set $N'\subseteq X$ and $A\subseteq B$ such that there is a surjective perfect function $f:N'\rightarrow A$.

Since $\sigma$-projectivity is a closed hereditary property, we know that $A$ is $\sigma$-projective. Then, using $\sigma$-PD, since $A$ is not $\sigma$-compact there is a closed subset of $A$ homeomorphic to $\omega^{\omega}$ \cite{Ke77}. Call this closed subset $W$. Notice that $f^{-1}(W)$ is a closed subset of $N'$ and $X$ and that $f\restrict f^{-1}(W)$ is perfect. Therefore, $f^{-1}(W)\subseteq X$ is a closed subset that is a perfect preimage of $\omega^{\omega}$.
\end{proof}

\begin{corollary}\label{perfect menger}
$\sigma$-PD implies that every perfect Menger space in $\pmb{\mathscr{K}}$ is $\sigma$-compact.
\end{corollary}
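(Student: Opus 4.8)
The plan is to argue by contradiction, with all the heavy lifting done by the Hurewicz Dichotomy. Suppose $X$ is a perfect Menger space in $\pmb{\mathscr{K}}$ that is not $\sigma$-compact. Then $X$ satisfies the hypotheses of Theorem \ref{perfect HD}, which (using $\sigma$-PD) yields a closed subset $C \subseteq X$ together with a perfect surjection $f : C \to \omega^{\omega}$.

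The next step is to apply two elementary permanence properties of the Menger property. First, Menger is closed-hereditary — given a countable sequence of open covers of the closed set $C$, extend each to an open cover of $X$ by adding the single open set $X \setminus C$, apply Mengerness of $X$, and discard that extra set — so $C$ is Menger. Second, Menger is preserved by continuous images (push each open cover of the image back, take finite selections, push them forward), and perfect maps are in particular continuous; hence $\omega^{\omega} = f(C)$ is Menger.

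This is the desired contradiction: $\omega^{\omega}$ is \emph{not} Menger, by the classical theorem of Hurewicz that the space of irrationals fails the Menger property (equivalently, a Menger metrizable space is bounded in the eventual-dominance order, which $\omega^{\omega}$ is not). Therefore no such $X$ can exist, and every perfect Menger space in $\pmb{\mathscr{K}}$ is $\sigma$-compact. (This sharpens Corollary \ref{U Menger is Hur} — Hurewicz — to full $\sigma$-compactness under the additional hypothesis that the space is perfect; note Okunev's space shows the ``perfect'' hypothesis cannot simply be dropped.)

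I do not anticipate a genuine obstacle here, since the substance is entirely contained in Theorem \ref{perfect HD}. The only points requiring a little care are making sure the Menger property really does transfer to closed subspaces and forward along continuous maps — both are immediate from the definition — and citing the correct classical fact that $\omega^{\omega}$ is not Menger.
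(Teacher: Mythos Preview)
Your proof is correct and follows essentially the same contradiction template as the paper: pass to a closed subset that maps perfectly onto a troublesome set of reals, transfer Menger downward and forward, and contradict. The one difference is in which intermediate result is invoked. You appeal to the Hurewicz Dichotomy (Theorem~\ref{perfect HD}) to get a closed subset mapping perfectly onto $\omega^{\omega}$, then finish with the classical fact that $\omega^{\omega}$ is not Menger. The paper instead cites the slightly weaker Theorem~\ref{perfect S-Lind closed preimage} directly, obtaining a closed $C\subseteq X$ mapping perfectly onto a non-$\sigma$-compact $\sigma$-projective $D$, and then derives the contradiction from Proposition~\ref{prop:Menger proj is cpct} ($\sigma$-PD implies Menger $\sigma$-projective sets are $\sigma$-compact). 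Your route packages the use of $\sigma$-PD inside the Hurewicz Dichotomy and ends with a ZFC fact; the paper's route uses only the ZFC structural theorem and applies $\sigma$-PD at the final step. Both are equally valid and of the same depth.
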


\begin{proof}
By Theorem \ref{perfect S-Lind closed preimage}, if $X\in \pmb{\mathscr{K}}$ is perfect and not $\sigma$-compact, then $X$ has a closed set, say $C$, that is the perfect preimage of a non-$\sigma$-compact $\sigma$-projective space, say $D$. If $X$ is Menger, then $C$ and $D$ are also Menger. But $D$ contradicts $\sigma$-PD (since that implies that every Menger $\sigma$-projective set is $\sigma$-compact).
\end{proof}

In \cite{KKP} the authors prove a version of Hurewicz Dichotomy for hereditarily Lindel\"of $K$-analytic subspaces of hereditarily paracompact spaces. They call their Theorem 3.13 the \emph{Calbrix-Hurewicz Theorem}. We expect we can prove a $\sigma$-PD version of that theorem, but have not tried.

\begin{definition}\label{def cosmic}
A space $X$ is \emph{cosmic} if it is the continuous image of a separable metrizable space.
\end{definition}

Arhangel’ski\u{\i}-Calbrix \cite{AC1} proved

\begin{theorem}
    Every Menger cosmic analytic space is $\sigma$-compact. 
\end{theorem}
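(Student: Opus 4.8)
The plan is to reduce the statement to the perfect-space theory developed above, using the fact that every cosmic space is automatically \emph{perfect} (every closed set is a $G_\delta$), so that only $K$-analytic — hence ZFC-level — inputs are involved.

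First I would check that a cosmic space $X$ is perfect. By definition $X$ carries a countable network; a network restricts to a network on every subspace, and a space with a countable network is Lindel\"of, so $X$ is hereditarily Lindel\"of. Since all our spaces are completely regular, $X$ is regular, and a regular hereditarily Lindel\"of space is perfect: given open $U\subseteq X$, regularity produces an open cover of $U$ by sets whose closures lie in $U$, Lindel\"ofness of $U$ extracts a countable subcover, and the union of the closures of that subcover is exactly $U$; hence $U$ is $F_\sigma$ and every closed subset of $X$ is $G_\delta$. Next, since $X$ is analytic it is a continuous image of a Polish space, and a Polish space is Lindel\"of and \v{C}ech-complete, so $X$ is $K$-analytic (Definition \ref{def:K-analytic}) and in particular lies in $\pmb{\mathscr{K}}$ at its lowest level. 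Thus $X$ is a perfect, Menger, $K$-analytic space, and I would conclude $\sigma$-compactness by the theorem of \cite{TT} that perfect Menger $K$-analytic spaces are $\sigma$-compact. If one prefers to stay inside the present paper, this is the $K$-analytic instance of the argument behind Corollary \ref{perfect menger}: were $X$ not $\sigma$-compact, Theorem \ref{perfect S-Lind closed preimage} would give a closed $N'\subseteq X$ mapping perfectly onto a closed subspace $A$ of $\omega^\omega$, necessarily non-$\sigma$-compact (a perfect preimage of a $\sigma$-compact space is $\sigma$-compact); being a non-$\sigma$-compact \emph{Polish} space, $A$ contains a closed copy of $\omega^\omega$ by the classical Hurewicz dichotomy \cite{Ke77} — here no determinacy is needed because $A$ is genuinely analytic — and pulling this copy back along the perfect map yields a closed, hence Menger, subspace of $X$ that maps continuously onto the non-Menger space $\omega^\omega$, a contradiction.

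The delicate point — the thing one must not overlook — is the verification that the argument stays within ZFC: that is exactly the observation that for a $K$-analytic (rather than merely $\sigma$-projective) space, the auxiliary non-$\sigma$-compact set produced by the perfect-preimage machinery is an honest Polish space, so that Hurewicz's classical theorem, not a determinacy hypothesis, supplies the closed copy of $\omega^\omega$. The hypothesis ``cosmic'' enters only through the first step, to supply perfectness, and it is essential: Okunev's space (Example \ref{Okunev space}) is a Hurewicz, hence Menger, $K$-analytic space that is not $\sigma$-compact, so it is necessarily neither perfect nor cosmic. Everything else is routine — hereditary Lindel\"ofness from a countable network, the regular-plus-hereditarily-Lindel\"of-implies-perfect implication, and the standard permanence of the Menger property under continuous images and passage to closed subspaces.
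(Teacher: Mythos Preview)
Your proof is correct and matches the paper's approach exactly: the paper does not give a self-contained proof but attributes the result to Arhangel'ski\u{\i}--Calbrix \cite{AC1} and then remarks that ``the key fact is that cosmic spaces are hereditarily Lindel\"of and hence perfect,'' after which the \cite{TT} result on perfect $K$-analytic Menger spaces (the $K$-analytic case of Corollary \ref{perfect menger}) applies. You have spelled out precisely this argument, and your second paragraph correctly identifies why no determinacy is needed --- the closed subset $A\subseteq\omega^\omega$ produced by Theorem \ref{perfect S-Lind closed preimage} is already Polish, so the classical Hurewicz theorem suffices.
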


We can now see that the key fact is that cosmic spaces are hereditarily Lindel\"of and hence perfect. We saw earlier (Theorem \ref{thm:cosmic diagonal}) that Lindel\"of $\mathbf{\Sigma}$-spaces with $G_\delta$ diagonals are cosmic. Thus

\begin{corollary}
    $K$-analytic Menger spaces with $G_\delta$ diagonals are $\sigma$-compact \cite{Tall20} and $\sigma$-PD implies Menger members of $\pmb{\mathscr{K}}$ with $G_\delta$ diagonals are $\sigma$-compact.
\end{corollary}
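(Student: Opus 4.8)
The plan is to combine the two ingredients that have just been developed: the characterization of cosmic spaces via $G_\delta$ diagonals in $\pmb{\mathscr{K}}(\mathbf{\Sigma}^1_\beta)$ (Theorem \ref{thm:cosmic diagonal}, together with its $K$-analytic specialization) and the fact that cosmic spaces are perfect (being hereditarily Lindel\"of, hence every subspace is Lindel\"of, and Lindel\"of plus regular plus the hereditary Lindel\"of property forces every closed set to be $G_\delta$). So the first step is: let $X$ be a Menger space in $\pmb{\mathscr{K}}$ (respectively $K$-analytic) with a $G_\delta$ diagonal. Since Menger implies Lindel\"of and members of $\pmb{\mathscr{K}}$ are powerfully Lindel\"of, $X\times X$ is Lindel\"of; combined with the $G_\delta$ diagonal, Theorem \ref{thm:cosmic diagonal} (applied at the level $\beta$ with $X\in\pmb{\mathscr{K}}(\mathbf{\Sigma}^1_\beta)$) shows $X$ is a continuous image of a $\sigma$-projective set, i.e. $X$ is cosmic. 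In the $K$-analytic case one uses the Rogers--Jayne version instead and gets $X$ cosmic outright in ZFC.

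Second step: observe that a cosmic space is hereditarily Lindel\"of. Indeed, being a continuous image of a separable metrizable (hence hereditarily Lindel\"of, even second countable) space, $X$ is hereditarily Lindel\"of — continuous images preserve the hereditary Lindel\"of property because hereditary Lindel\"ofness of $Y$ is equivalent to every open subspace of $Y$ being Lindel\"of, and the preimage of an open set is open. A regular hereditarily Lindel\"of space is perfect: given a closed set $C$, its complement $U$ is open, hence (by regularity and Lindel\"ofness of $U$) a countable union of sets whose closures lie in $U$, so $U$ is $F_\sigma$ and $C$ is $G_\delta$. Hence $X$ is a perfect space.

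Third step: now $X$ is a perfect Menger space lying in $\pmb{\mathscr{K}}$ (the defining USCCV multifunction from a $\sigma$-projective set is unchanged). Apply Corollary \ref{perfect menger}: under $\sigma$-PD, every perfect Menger space in $\pmb{\mathscr{K}}$ is $\sigma$-compact. In the $K$-analytic case, use the ZFC statement that $K$-analytic perfect Menger spaces are $\sigma$-compact — this is exactly what \cite{TT} proved and what Corollary \ref{perfect menger} generalizes; equivalently one runs Theorem \ref{perfect S-Lind closed preimage} followed by the ZFC fact that analytic Menger sets are $\sigma$-compact \cite{Hu} (since the non-$\sigma$-compact closed preimage would sit over a non-$\sigma$-compact analytic — indeed Polish-in-a-Polish-space — set). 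Either way we conclude $X$ is $\sigma$-compact, which is the desired statement.

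The only mild subtlety — and the step I would be most careful about — is making sure that passing from $X$ to its ``cosmic'' reincarnation does not lose the structural hypothesis needed for Corollary \ref{perfect menger}: one must check that being cosmic and being in $\pmb{\mathscr{K}}$ are simultaneously available, not that one replaces $X$ by a homeomorph living somewhere else. This is fine because Theorem \ref{thm:cosmic diagonal} does not alter $X$ itself; it merely exhibits $X$ as a continuous image of a $\sigma$-projective set, while $X$ continues to be the USCCV image of whatever $\sigma$-projective set witnessed $X\in\pmb{\mathscr{K}}$, and perfectness is an intrinsic property of $X$. So the three facts ``$X\in\pmb{\mathscr{K}}$'', ``$X$ perfect'', ``$X$ Menger'' all hold of the same space, and Corollary \ref{perfect menger} applies directly. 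Everything else is bookkeeping.
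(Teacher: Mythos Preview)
Your proposal is correct and follows essentially the same route as the paper: use Theorem~\ref{thm:cosmic diagonal} to conclude $X$ is cosmic, observe that cosmic spaces are hereditarily Lindel\"of and hence perfect, then apply Corollary~\ref{perfect menger} (or its ZFC $K$-analytic precursor from \cite{TT}) to the perfect Menger space $X$. The paper's proof is terse (``By Theorem~\ref{perfect HD} and Corollary~\ref{perfect menger}''), relying on the preceding paragraph for the cosmic $\Rightarrow$ perfect step, while you spell out that implication explicitly; the content is the same.
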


\begin{proof}
    By Theorem \ref{perfect HD} and Corollary \ref{perfect menger}.
\end{proof}

\section{Variations on $\pmb{\mathscr{K}}$}\label{K hier}
In this section, we discuss subclasses of $\pmb{\mathscr{K}}$ and possible enlargements of it. For example, let $\pmb{\mathscr{C}}$ be the class of continuous images of $\sigma$-projective sets. Not surprisingly, $\pmb{\mathscr{C}}$ is a proper subclass of $\pmb{\mathscr{K}}$---see Example \ref{In U not C}. We could also consider $\pmb{\mathscr{P}}$; the class of perfect preimages of $\sigma$-projective sets. Also not surprisingly, $\pmb{\mathscr{P}}$ is a proper subclass of $\pmb{\mathscr{K}}$.

\begin{theorem}
    $\sigma$-PD implies Menger members of $\pmb{\mathscr{C}}$ are $\sigma$-compact.
\end{theorem}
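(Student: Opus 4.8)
The plan is to reduce the statement for $\pmb{\mathscr{C}}$ to Proposition~\ref{prop:Menger proj is cpct}, which says that $\sigma$-PD implies every Menger $\sigma$-projective set of reals is $\sigma$-compact. Let $X\in\pmb{\mathscr{C}}$ be Menger, say $X=g(B)$ where $B$ is a $\sigma$-projective subset of $\omega^\omega$ (equivalently a $\sigma$-projective set) and $g:B\to X$ is continuous and surjective. Since $\sigma$-compactness is preserved by continuous surjections, it would be enough to show $B$ is $\sigma$-compact; but of course $B$ need not be Menger just because $X$ is, so we cannot argue that directly. Instead I would exploit that $X$, being a continuous image of a subspace of $\omega^\omega$, is cosmic, hence hereditarily Lindel\"of, hence \emph{perfect} (every closed set is $G_\delta$). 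Then $\pmb{\mathscr{C}}\subseteq\pmb{\mathscr{K}}$, so $X$ is a perfect member of $\pmb{\mathscr{K}}$, and Corollary~\ref{perfect menger} applies verbatim: $\sigma$-PD implies every perfect Menger space in $\pmb{\mathscr{K}}$ is $\sigma$-compact. That would finish it.

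In more detail, the steps are: (1) Observe $\pmb{\mathscr{C}}\subseteq\pmb{\mathscr{K}}$, since every continuous function is USCCV, so a continuous image of a $\sigma$-projective set is a $K$-$\sigma$-projective space. (2) Show every $X\in\pmb{\mathscr{C}}$ is cosmic in the sense of Definition~\ref{def cosmic}: a $\sigma$-projective subset $B$ of $\omega^\omega$ is separable metrizable, and $X$ is its continuous image, so by definition $X$ is cosmic. (3) Recall that cosmic spaces are hereditarily Lindel\"of (a continuous image of a separable metrizable space has a countable network, and spaces with a countable network are hereditarily Lindel\"of and hereditarily separable), hence every subspace is Lindel\"of, hence every closed set --- being Lindel\"of in a regular space --- is $G_\delta$; that is, $X$ is perfect. (4) Apply Corollary~\ref{perfect menger}: since $X$ is a perfect Menger space in $\pmb{\mathscr{K}}$, $\sigma$-PD gives that $X$ is $\sigma$-compact.

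I do not expect any serious obstacle; the only point requiring a word of care is step (3), the implication ``cosmic $\Rightarrow$ perfect.'' One should note that a continuous image of a separable metrizable space has a countable network (push forward a countable base), that countable-network spaces are hereditarily Lindel\"of, and that a Lindel\"of subset of a Tychonoff (indeed regular) space is $G_\delta$ in the whole space when that whole set is the closed set in question --- more precisely, in a perfectly normal space every closed set is $G_\delta$, and hereditarily Lindel\"of regular spaces are perfectly normal. Since all our spaces are assumed completely regular, this goes through. Everything else is a direct invocation of results already in the paper, so the proof is essentially a one-line appeal to Corollary~\ref{perfect menger} once perfectness is noted.

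\begin{proof}
By Definition~\ref{def cosmic}, every $X\in\pmb{\mathscr{C}}$ is cosmic: it is the continuous image of a $\sigma$-projective set, which is separable metrizable. Cosmic spaces have countable networks, hence are hereditarily Lindel\"of; a regular hereditarily Lindel\"of space is perfectly normal, so $X$ is perfect. Since every continuous function is USCCV, $\pmb{\mathscr{C}}\subseteq\pmb{\mathscr{K}}$. Thus a Menger member of $\pmb{\mathscr{C}}$ is a perfect Menger space in $\pmb{\mathscr{K}}$, and by Corollary~\ref{perfect menger}, $\sigma$-PD implies it is $\sigma$-compact.
\end{proof}
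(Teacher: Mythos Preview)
Your proposal is correct and follows essentially the same approach as the paper: the paper's proof is the one-liner ``Members of $\pmb{\mathscr{C}}$ are cosmic, hence perfect, so Corollary~\ref{perfect menger} applies,'' and you have simply unpacked the implications (cosmic $\Rightarrow$ hereditarily Lindel\"of $\Rightarrow$ perfect, and $\pmb{\mathscr{C}}\subseteq\pmb{\mathscr{K}}$) that the paper leaves implicit.
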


\begin{proof}
    Members of $\pmb{\mathscr{C}}$ are cosmic, hence perfect, so \ref{perfect menger} applies.
\end{proof}

\begin{theorem}\label{thm:Menger perfect preimages}
Menger perfect preimages of analytic sets are $\sigma$-compact. $\sigma$-PD implies Menger members of $\pmb{\mathscr{P}}$ are $\sigma$-compact. 
\end{theorem}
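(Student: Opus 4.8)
The plan is to reduce the statement to its already-known ``sets of reals'' analogue and then transport $\sigma$-compactness back across the perfect map. Two facts do all the work: the Menger property is preserved by continuous surjections, and a perfect map between Tychonoff spaces pulls compact sets back to compact sets.

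First I would unwind the definition of $\pmb{\mathscr{P}}$: a Menger $X\in\pmb{\mathscr{P}}$ comes equipped with a perfect surjection $f\colon X\to D$, where $D$ is an analytic (resp. $\sigma$-projective) subset of $\mathds{R}$, which we may take to lie inside $[0,1]$. Since an open cover of $D$ pulls back along $f$ to an open cover of $X$, and the images of a finite subselection upstairs form a finite subselection downstairs, $D$ is Menger as a continuous image of the Menger space $X$.

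Next I would invoke the known theorem for sets of reals. For the ZFC assertion this is Hurewicz's theorem \cite{Hu}: a Menger analytic set is $\sigma$-compact. For the second assertion it is Proposition \ref{prop:Menger proj is cpct}, which under $\sigma$-PD gives the same conclusion for Menger $\sigma$-projective sets. In either case write $D=\bigcup_{n\in\omega}K_n$ with each $K_n$ compact. Then, since $f$ is perfect, each $f^{-1}(K_n)$ is compact (a standard property of perfect maps between Tychonoff spaces, see \cite{Eng89}), and surjectivity of $f$ gives $X=\bigcup_{n\in\omega}f^{-1}(K_n)$, so $X$ is $\sigma$-compact.

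I do not anticipate a real obstacle: the entire mathematical content sits in the two cited results about subsets of $\mathds{R}$, and the only structural ingredient is that perfect maps are proper. It is worth emphasizing two points, though: no determinacy hypothesis beyond Proposition \ref{prop:Menger proj is cpct} is needed, and the conclusion here ($\sigma$-compact) is strictly stronger than what holds for the larger class $\pmb{\mathscr{K}}$ (where Menger members need only be Hurewicz), the improvement coming precisely from restricting to perfect preimages, along which $\sigma$-compactness — unlike the Hurewicz property by itself — lifts.
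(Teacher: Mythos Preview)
Your proposal is correct and follows essentially the same argument as the paper: push Menger forward along the perfect surjection, apply the known ``Menger implies $\sigma$-compact'' result for analytic (resp.\ $\sigma$-projective) subsets of $[0,1]$, and then pull $\sigma$-compactness back along the perfect map. The paper is terser, simply noting that perfect preimages of $\sigma$-compact spaces are $\sigma$-compact rather than decomposing explicitly, but the content is identical.
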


\begin{proof}
Let $X$ be a Menger space, $B\subseteq [0,1]$ a $\sigma$-projective set and $f:X\rightarrow B$ a perfect surjective function. Since being Menger is preserved by continuous functions, we have that $f(X)=B$ is Menger. By $\sigma$-PD, $B$ is $\sigma$-compact. Therefore, its perfect preimage, $f^{-1}(B)=X$, is also $\sigma$-compact. The analytic case is analogous.
\end{proof}

Okunev's space (see Example \ref{Okunev space}) is $K$-analytic---and hence is in $\pmb{\mathscr{K}}$---but is not in $\pmb{\mathscr{P}}$ since it is Menger but not $\sigma$-compact. 

In generalizing classical descriptive set theory beyond the real line or separable metrizable spaces, a crucial obstacle is how to generalize the natural idea that the complement in $\mathds{R}$  or $[0,1]$ of a nice set is another nice set, worthy of being included in our hierarchy of nice sets. When we are no longer working within a fixed space such as $\mathds{R}$, taking a complement suddenly catapults us onto the shaky ground of classes.
One way of dealing with this classic problem is to make the relatively mild assumption that there are arbitrarily large inaccessible cardinals $\kappa$ and take for $X \in V_{\kappa}$ its complement in $V_{\kappa}$.  A more plausible approach to dealing with this problem is to take the complement of a space $X$ to be the Stone-\v{C}ech remainder $X^{\ast}$. It is convenient to have $\beta X^{\ast} = \beta X$; this will happen for nowhere locally compact (NLC) spaces, i.e. spaces such that no point has a compact neighbourhood \cite{vD92}. 
This is not such an onerous restriction: for any $X$, $X \times \mathds{Q}$ and $X\times \mathds{P}$ are NLC. By counting ultrafilters, one can prove that $|\beta X| \leq 2^{2^{|X|}}$. For $\kappa$ inaccessible, we then have that $V_{\kappa}$ is closed under Stone-\v{C}ech remainders, i.e. for any space $X$ in $V_{\kappa}$, $X^{\ast}$ is in $V_{\kappa}$, since $V_{\kappa}$ is a model of ZFC and thus satisfies Power Set and Separation axioms. Similarly, $V_{\kappa}$ is closed under continuous images, since they do not increase cardinality and $V_{\kappa}$ satisfies Replacement. 

The assumption that there is an inaccessible cardinal is much weaker than the large cardinal assumption needed to imply $\sigma$-PD, and is exactly the consistency strength of what is needed to get $\sigma$-projective Menger sets to be $\sigma$-compact \cite{TTT21}. If one wants an axiomatic approach, rather than working in a particular $V_{\kappa}$, one can assume there are arbitrarily large inaccessible cardinals.
Then the Stone-\v{C}ech remainder and all continuous images of a space will be sets rather than classes. Again, the assumption that there are arbitrarily large inaccessible cardinals is a much weaker assumption than is required to get $\sigma$-PD. Making this inaccessible assumption, we can then attempt to form an $\omega_1$-length hierarchy by closing under Stone-\v{C}ech remainders, continuous images, and countable unions, possibly restricting ourselves to NLC spaces. Does this make sense? Are spaces in this hierarchy projectively $\sigma$-projective, assuming $\sigma$-PD, and hence Menger ones Hurewicz? We made this claim in a seminar in January, 2022 at the Fields Institute, but that was premature and we withdraw it. There are difficulties.

Notice that our otherwise well-behaved class $\pmb{\mathscr{K}}$ is NOT closed under Stone-\v{C}ech remainder, even for nowhere locally compact spaces:

\begin{definition}
A space is \emph{co-$K$-analytic ($K$-$\mathbf{\Pi}^1_1$)} if its Stone-\v{C}ech remainder is a $K$-analytic space.
\end{definition}

A different reasonable definition of a space $X$ being co-$K$-analytic is that $X = Y^*$, for some $K$-analytic $Y$. By Proposition \ref{nwlc SC-Com}, these two definitions coincide for NLC spaces. 

\begin{repexamp}{A co-$K$-analytic not K}
For every cardinal $\kappa > \aleph_0$, if $D$ is the discrete space of size $\kappa$, then $D\times \omega^{\omega}$ is a \v{C}ech-complete NLC space that is not Lindel\"of but is co-$K$-analytic.
\end{repexamp}

\begin{repexamp}{Okunev space}
A $K$-analytic space with non-Lindel\"of Stone-\v{C}ech remainder: Okunev's space.
\end{repexamp}

If we try to close the $K$-analytic spaces under Stone-\v{C}ech remainders and continuous images, we quickly get every space!

\begin{theorem}\label{S-compact remainder}
Every topological space is the continuous image of an NLC space with $\sigma$-compact Stone-\v{C}ech remainder.
\end{theorem}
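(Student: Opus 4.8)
The plan is to produce, for an arbitrary space $X$, one space $Y$ that at the same time is \v{C}ech-complete (equivalently, by the definition above, has $\sigma$-compact Stone-\v{C}ech remainder), is nowhere locally compact, and maps continuously onto $X$. The space I would use is $Y = X_{d}\times\omega^{\omega}$, where $X_{d}$ denotes the underlying set of $X$ equipped with the discrete topology. Note $Y$ is Tychonoff, being the product of a (discrete, hence metrizable) space and a Polish space, so $\beta Y$ and $Y^{\ast}$ make sense regardless of any separation hypothesis on $X$.

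I would then verify the three required properties in turn. \emph{Surjection:} the identity $X_{d}\to X$ is a continuous surjection, since the discrete topology is the finest topology on the set, and the coordinate projection $Y\to X_{d}$ is a continuous surjection; hence the composite $(x,f)\mapsto x$ is a continuous surjection of $Y$ onto $X$. \emph{$\sigma$-compact remainder:} a discrete space is completely metrizable, hence \v{C}ech-complete, and $\omega^{\omega}$ is Polish, hence \v{C}ech-complete, so by Proposition \ref{Cech complete prod} the finite product $Y = X_{d}\times\omega^{\omega}$ is \v{C}ech-complete; by the definition of \v{C}ech-completeness, $Y^{\ast}=\beta Y\setminus Y$ is $\sigma$-compact. \emph{NLC:} given a compact $K\subseteq Y$, its projection to $X_{d}$ is a compact subset of a discrete space, hence a finite set $F$, while its projection to $\omega^{\omega}$ is contained in some compact $C\subseteq\omega^{\omega}$; thus $K\subseteq F\times C$. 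Since compact subsets of $\omega^{\omega}$ are branch sets of finitely branching trees, $C$ has empty interior, so $F\times C$ has empty interior (a nonempty basic open subset of $Y$ has the form $\{x\}\times U$ with $U$ nonempty open in $\omega^{\omega}$, and no such $U$ is contained in $C$), whence $K$ has empty interior in $Y$. So $Y$ is NLC.

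The argument is short and essentially bookkeeping; the only place calling for a moment's care is the NLC verification, where one must control a compact subset of the product by passing to both coordinate projections and invoking the non-local-compactness of $\omega^{\omega}$. I do not anticipate a genuine obstacle: the whole point of the construction is that multiplying the trivially surjecting discrete space $X_{d}$ by $\omega^{\omega}$ destroys local compactness while preserving \v{C}ech-completeness and the continuous surjection onto $X$.
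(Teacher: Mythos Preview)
Your proof is correct and follows essentially the same approach as the paper: both take $Y=D\times\omega^{\omega}$ with $D$ a discrete space of cardinality $|X|$ (your $X_d$ is precisely such a $D$), observe that this product is \v{C}ech-complete and NLC, and map onto $X$ via the projection followed by the identity. You supply more detail for the NLC and \v{C}ech-completeness verifications than the paper does, but the construction and reasoning are the same.
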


\begin{proof}
Let $X$ be a topological space of size $\kappa$ and let $D$ be the discrete space of size $\kappa$. The space $D\times \omega^{\omega}$ is a \v{C}ech-complete NLC space. This means that its Stone-\v{C}ech remainder is NLC $\sigma$-compact. We know that $X$ is a continuous image of $D$ and that $D$ is a continuous image of $D\times \omega^{\omega}$.
\end{proof}

Is there a more modest approach? Perhaps we should confine ourselves to Lindel\"of spaces, which is a very plausible restriction, since Lindel\"ofness is used in most $K$-analytic applications. We thus have the following definitions:

\begin{definition}
A topological space is \emph{$L$-$\mathbf{\Sigma}^1_{1}$} if it is $K$-analytic. A topological space is \emph{$L$-$\mathbf{\Pi}^1_{1}$} if it is a Lindel\"of space and its Stone-\v{C}ech remainder is an $L$-$\mathbf{\Sigma}^1_{1}$\,. In general, a topological space is \emph{$L$-$\mathbf{\Sigma}^1_{\xi+1}$}, $1\leq \xi<\omega_1$ if it is the continuous image of an $L$-$\mathbf{\Pi}^1_{\xi}$. A topological space is \emph{$L$-$\mathbf{\Pi}^1_{\xi}$}, $1\leq \xi<\omega_1$ if it is Lindel\"of and its Stone-\v{C}ech remainder is an $L$-$\mathbf{\Sigma}^1_{\xi}$.

For $\alpha<\omega_1$ a limit ordinal, a topological space $X$ is \emph{$L$-$\mathbf{\Sigma}^1_{\alpha}$} if there exists $Y_i\in L\text{-}\mathbf{\Sigma}^1_{\xi_i}$, $i<\omega$, $\xi_i<\alpha$ such that $X=\bigcup_{i\in \omega} Y_i$.

We will refer to $ L\text{-}\mathbf{\Sigma}^1_{\omega_1}$ as the class $\mathcal{L}$.
\end{definition}

Theorem \ref{S-compact remainder} shows we need to add something like this Lindel\"of restriction. Note that, as far as the conjecture that ``definable'' Menger spaces are $\sigma$-compact (or Hurewicz) is concerned, this is not so crucial, since Menger spaces are Lindel\"of, but if we want to generalize other properties of $K$-analytic spaces, this is a problem. The remainders of $K$-analytic spaces \textemdash even NLC ones \textemdash need not be Lindel\"of. 

We conjecture that restricting ourselves to members of $\mathcal{L}$ gives us a nicer class to work with, but we have still been unable to prove that such spaces are projectively $\sigma$-projective. Posing a concrete problem:

\begin{question}
Does $\sigma$-PD (actually, just $Det(\Pi^1_1)$) imply that Lindel\"of co-$K$-analytic spaces are projectively $\sigma$-projective?
\end{question}

Should we resign ourselves to adding extra conditions that are not necessarily satisfied by $K$-analytic spaces? One obvious try is to require countable type. Important examples to contend with are \emph{Okunev's space} and its remainder.

\begin{repexamp}{Okunev space}\label{eg:Okunev2}
A co-$K$-analytic NLC space, $(O \times \mathds{Q})^{\ast}$, that is not Lindel\"of. $O$ is Okunev's space.
\end{repexamp}

Okunev's space is an example of a $K$-analytic space that is not of countable type. If we require countable type, we will have remainders of Lindel\"of spaces Lindel\"of. But continuous images of even NLC $K$-analytic spaces of countable type need not be of countable type. For example, both $O$ and $O \times \mathds{Q}$ are images of spaces of countable type (see Example \ref{Okunev space} below).

There is one case in which we can add a not-too-onerous condition to a co-$K$-analytic space and get Menger such spaces to be $\sigma$-compact.

\begin{theorem}\label{co-k perfect Menger are sigma}
$Det(\Pi^1_1)$ implies perfect co-$K$-analytic Menger spaces are $\sigma$-compact.
\end{theorem}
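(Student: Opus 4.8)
The plan is to reduce to the already-proven Hurewicz Dichotomy machinery of Section 6, but run through the Stone-\v{C}ech remainder. Let $X$ be a perfect co-$K$-analytic Menger space; so $X$ is perfect, Lindel\"of (being Menger), and $X^{\ast}=\beta X\setminus X$ is $K$-analytic. Since $K$-analytic spaces are Lindel\"of, and $X$ is Lindel\"of, $\beta X$ is a compactification in which both $X$ and $X^{\ast}$ are Lindel\"of. The idea is that ``perfect'' plus ``$X^{\ast}$ $K$-analytic'' should, via the USCCV/CIPP dictionary of Section 3, feed into Theorem \ref{perfect S-Lind closed preimage} or its proof to produce, if $X$ is \emph{not} $\sigma$-compact, a closed subset of $X$ that perfectly preimages a non-$\sigma$-compact analytic (more precisely $\mathbf{\Pi}^1_1$, since a complement is involved) set of reals; then $Det(\Pi^1_1)$ gives a closed copy of $\omega^\omega$ inside that set \cite{Ke77}, whose perfect preimage sits as a closed Menger subspace of $X$ --- contradicting that $Det(\Pi^1_1)$ makes Menger co-analytic sets $\sigma$-compact (this last is the $\xi=1$ instance of Proposition \ref{prop:Menger proj is cpct} relativized to $\mathbf{\Pi}^1_1$, which needs only $Det(\Pi^1_1)$).

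The concrete steps I would carry out are: (1) Using Corollary \ref{CIPP and USCCV} and Theorem \ref{USCCV of irrationals}, fix a USCCV surjection $G:\mathbb{P}\to X^{\ast}$ and form, via the universal property (Theorem \ref{USCCV SC-comp}), the USCCV extension $\hat G : \mu(\mathbb{P}) \to \beta X$ where $\mu(\mathbb{P})$ is a metrizable compactification of $\mathbb{P}$; since $G$ is perfect USCCV we get $\hat G(\mu(\mathbb{P})\setminus \mathbb{P}) = \beta X \setminus X^{\ast}$. But $\beta X\setminus X^\ast$ is exactly $X$ (using that $\beta X^{\ast}=\beta X$, which holds because $X$ perfect Lindel\"of non-$\sigma$-compact is NLC, or is assumed NLC in the intended application); so restricting $\hat G$ gives a USCCV surjection from the co-analytic set $\mu(\mathbb{P})\setminus \mathbb{P}$ onto $X$. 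Hence $X$ is a USCCV image of a $\mathbf{\Pi}^1_1$ set, i.e. $X\in \pmb{\mathscr{K}}(\mathbf{\Pi}^1_1)$. (2) Now $X$ is a perfect space that is a USCCV image of a co-analytic set; feed this into Theorem \ref{perfect S-Lind closed preimage} --- whose proof (Lemmas \ref{perfect special sequence of closed sets}--\ref{Perfect Souslin and USCCV}) only uses that the domain $B\subseteq\omega^\omega$ and that $X$ is a perfect Lindel\"of $\Sigma$-space, both of which hold --- to obtain, assuming $X$ not $\sigma$-compact, a non-$\sigma$-compact closed $N'\subseteq X$ with a perfect surjection $f:N'\to A$, $A$ a closed subset of our $\mathbf{\Pi}^1_1$ set, hence $A\in\mathbf{\Pi}^1_1$. (3) Apply $Det(\Pi^1_1)$: a non-$\sigma$-compact $\mathbf{\Pi}^1_1$ set contains a closed copy $W$ of $\omega^\omega$ \cite{Ke77}. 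Then $f^{-1}(W)$ is closed in $X$, Menger (closed subspace of Menger), and perfectly preimages $\omega^\omega$; so $f^{-1}(W)$ is a Menger perfect preimage of a non-$\sigma$-compact co-analytic set, which by the co-analytic case of Theorem \ref{thm:Menger perfect preimages} (valid from $Det(\Pi^1_1)$) must be $\sigma$-compact --- but a perfect preimage of $\omega^\omega$ is not $\sigma$-compact. Contradiction; therefore $X$ is $\sigma$-compact.

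The main obstacle I anticipate is Step (1): making rigorous the passage ``$X^{\ast}$ $K$-analytic $\Rightarrow$ $X$ is a USCCV image of a co-analytic set.'' The clean route needs $\beta X^{\ast}=\beta X$, which by Proposition \ref{nwlc SC-Com} requires $X$ to be NLC; a perfect Lindel\"of space that fails to be $\sigma$-compact need not literally be NLC, so one either restricts attention to the NLC case (enough for the stated theorem, since the interesting co-$K$-analytic examples such as $(O\times\mathbb{Q})^{\ast}$ are NLC) or argues directly inside $\beta X$ that the remainder of $X^\ast$ taken in $\beta X$ is $X$ together with the locally compact part, and peels off the locally compact part using perfectness. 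A secondary subtlety is bookkeeping the level of the pointclass: the complement of a metrizable $K$-analytic ($=$ analytic) set is genuinely $\mathbf{\Pi}^1_1$, and one must check Theorems \ref{perfect S-Lind closed preimage}, \ref{perfect HD}, \ref{thm:Menger perfect preimages} and the Kechris result \cite{Ke77} all survive with $\mathbf{\Sigma}^1_1$ replaced by $\mathbf{\Pi}^1_1$ under the hypothesis $Det(\Pi^1_1)$ --- they do, since $Det(\Pi^1_1)$ implies the perfect set property and the Hurewicz dichotomy for $\mathbf{\Pi}^1_1$ sets, but this should be stated explicitly.
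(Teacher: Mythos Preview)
Your Step~(1) has a genuine gap: you assert that the USCCV surjection $G:\mathbb{P}\to X^{\ast}$ is \emph{perfect} USCCV, and then invoke the ``Furthermore'' clause of Theorem~\ref{USCCV SC-comp} to conclude $\hat G(\mu(\mathbb{P})\setminus\mathbb{P})=\beta X\setminus X^{\ast}$. But a $K$-analytic space is merely a USCCV image of $\mathbb{P}$, not a \emph{perfect} USCCV image; there is no reason $G$ should be closed with compact point-inverses. Without perfectness, Theorem~\ref{USCCV SC-comp} only gives you a USCCV extension $\hat G$ onto the compactification, with no control over where the remainder of $\mu(\mathbb{P})$ lands. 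So the passage ``$X^{\ast}$ $K$-analytic $\Rightarrow X\in\pmb{\mathscr{K}}(\mathbf{\Pi}^1_1)$'' is not established by your argument. The NLC obstacle you flag is also real and separate from this, and your suggested workarounds (restrict to NLC, or peel off a locally compact part) are not carried out.

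The paper's proof takes a completely different and much shorter route that sidesteps both obstacles. Since $X^{\ast}$ is $K$-analytic it is in particular Lindel\"of~$\Sigma$; by Arhangel'ski\u{\i}'s characterization (Proposition~\ref{remainder of s}) this means $X$ is an $s$-space. Then Proposition~\ref{perfect s is p} says perfect $s$-spaces are $p$-spaces, so $X$ is a Lindel\"of $p$-space, i.e.\ a perfect preimage of a separable metrizable space. At that point the paper simply quotes the earlier result from \cite{Tall20} (recorded here as Proposition~\ref{prop:Det coKp sigma}) that $Det(\Pi^1_1)$ implies Menger co-$K$-analytic $p$-spaces are $\sigma$-compact. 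Note that this route in fact delivers the conclusion you were aiming for in Step~(1)---$X$ ends up a perfect preimage of a co-analytic set---but it gets there via the $s$-space/$p$-space machinery rather than by trying to push a USCCV map through the Stone--\v{C}ech remainder.
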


\begin{proof}
Let $X$ be a perfect co-$K$-analytic Menger space. By definition, the Stone-\v{C}ech remainder of $X$ is $K$-analytic, hence, a Lindel\"of $\Sigma$-space. But Proposition \ref{remainder of s}, $X$ is an $s$-space, but Proposition \ref{perfect s is p} tells us that perfect $s$-spaces are $p$-spaces. In \cite{Tall20} the second author proved:

\begin{propo}\label{prop:Det coKp sigma}
    $Det(\Pi^1_1)$ implies Menger co-$K$-analytic $p$-spaces are $\sigma$-compact.
\end{propo}
\end{proof}

However, we have been unable to extend Theorem \ref{co-k perfect Menger are sigma}:

\begin{question}
Are perfect Menger continuous images of co-$K$-analytic spaces $\sigma$-compact?
\end{question}

Not every Menger $p$-space is $\sigma$-compact---consider a Menger non-$\sigma$-compact subset of $\mathds{R}$. But what about Menger co-$K$-analytic $p$-spaces? We answered that in Proposition \ref{prop:Det coKp sigma} above. In fact,

\begin{definition}
    We will say $X$ is \emph{co-$\pmb{\mathscr{K}}$} if $X^* \in \pmb{\mathscr{K}}$. We similarly define $X$ being \emph{co-$\pmb{\mathscr{P}}$}.
\end{definition}   

\begin{theorem}
    $\sigma$-PD implies Menger co-$\pmb{\mathscr{K}}$ $p$-spaces are $\sigma$-compact.
\end{theorem}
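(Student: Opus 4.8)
The plan is to push $X$ down to its separable metrizable ``reflection'' and then repeat the argument behind Proposition \ref{prop:Det coKp sigma}, with ``$K$-analytic'' replaced throughout by ``$\pmb{\mathscr{K}}$''. That replacement goes through precisely because membership in $\pmb{\mathscr{K}}$ is a \emph{perfect} property and metrizable members of $\pmb{\mathscr{K}}$ are $\sigma$-projective sets (Corollary \ref{crl:separable metric in U}).

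First I would note that a Menger space is Lindel\"of, so $X$ is a Lindel\"of $p$-space and hence, by the usual characterization, the perfect preimage of a separable metrizable space $M$ along some perfect surjection $f\colon X\to M$. Since continuous images of Menger spaces are Menger, $M=f(X)$ is Menger; and since perfect preimages of $\sigma$-compact spaces are $\sigma$-compact (preimages of compacta under a perfect map are compact), it is enough to show that $M$ is $\sigma$-compact.

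Next I would transfer the hypothesis on $X^{\ast}$ to $M$. Let $\hat f\colon\beta X\to\beta M$ be the continuous extension of $f$. By Proposition \ref{SC perfect extension}, $\hat f$ is a perfect surjection with $\hat f(X)=M$ and $\hat f(\beta X\setminus X)=\beta M\setminus M$; in particular no point of $\beta X\setminus X$ maps into $M$, so $\hat f^{-1}(M)=X$ and $\hat f^{-1}(M^{\ast})=X^{\ast}$, and by Proposition \ref{perfect restriction} the restriction $\hat f\restrict X^{\ast}\colon X^{\ast}\to M^{\ast}$ is a perfect surjection. Since membership in $\pmb{\mathscr{K}}$ is a perfect property (the corollary just after Corollary \ref{U perfect property}) and $X^{\ast}\in\pmb{\mathscr{K}}$, we get $M^{\ast}\in\pmb{\mathscr{K}}$. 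Fix now a metrizable compactification $cM$ of $M$ --- say, the closure of $M$ in $[0,1]^{\omega}$. Applying Proposition \ref{Perfect property lemma} to the perfect property ``being in $\pmb{\mathscr{K}}$'' yields $cM\setminus M\in\pmb{\mathscr{K}}$; this space is metrizable, so Corollary \ref{crl:separable metric in U} places it in $\mathbf{\Sigma}^1_{\gamma}$ for some $\gamma<\omega_1$, i.e. it is a $\sigma$-projective set. As the $\sigma$-projective subsets of the Polish space $cM$ are closed under complementation, $M=cM\setminus(cM\setminus M)$ is $\sigma$-projective. Being Menger as well, $M$ is $\sigma$-compact by Proposition \ref{prop:Menger proj is cpct} (this is where $\sigma$-PD is used), and therefore $X=f^{-1}(M)$ is $\sigma$-compact.

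I do not anticipate a serious obstacle: the proof is an assembly of facts already established in the excerpt. The one delicate point is the complexity transfer through remainders --- one should move from $M^{\ast}=\beta M\setminus M$ to the metrizable remainder $cM\setminus M$ via the ``perfect property'' principle (Proposition \ref{Perfect property lemma} together with the corollary after Corollary \ref{U perfect property}), rather than attempting to analyze the non-metrizable space $\beta M\setminus M$ directly. A second point worth stating carefully is that a Menger $p$-space really is a Lindel\"of $p$-space in the sense of Definition \ref{def p-space}, so that the metrizable reflection $M$ is genuinely available; this is the step where Lindel\"ofness, coming for free from the Menger hypothesis, is essential.
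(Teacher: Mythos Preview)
Your proof is correct and follows essentially the same route as the paper's: reflect the Menger $p$-space along a perfect map to a separable metrizable $M$, use perfectness of $\pmb{\mathscr{K}}$ together with Proposition~\ref{Perfect property lemma} to place the metrizable remainder $cM\setminus M$ in $\pmb{\mathscr{K}}$ (hence in the $\sigma$-projective sets via Corollary~\ref{crl:separable metric in U}), complement, and apply Proposition~\ref{prop:Menger proj is cpct}. The only cosmetic difference is that the paper isolates the step ``perfect images of co-$\pmb{\mathscr{K}}$ spaces are co-$\pmb{\mathscr{K}}$'' as a separate lemma (Lemma~\ref{lem:TallGeneralization}), whereas you inline it; your version is in fact slightly more careful about the Lindel\"of-$p$-space step.
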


\begin{proof}
    We first prove a generalization of \cite[Lemma 1.9]{Tall20}:

    \begin{lemma}\label{lem:TallGeneralization}
        Perfect images of elements of co-$\pmb{\mathscr{K}}$ are elements of co-$\pmb{\mathscr{K}}$.
    \end{lemma}

    \begin{proof}
        Let $f$ be a perfect map mapping an $X \in \pmb{\mathscr{K}}$ onto $Y$. Extend $f$ to $\widehat{f}$ mapping $\beta X$ onto $\beta Y$. Then by Proposition \ref{SC perfect extension} $\widehat{f}(X^*) = Y^*$ and $\widehat{f}\,\text{\scalebox{1}[1.3]{$\upharpoonright$}}{\widehat{f}^{-1}(Y^*)} = \widehat{f}\,\text{\scalebox{1}[1.3]{$\upharpoonright$}}{X^*}$ is perfect. But $X^* \in \pmb{\mathscr{K}}$ and so $Y^* \in \pmb{\mathscr{K}}$.
    \end{proof}

    Now to prove the theorem, let $X$ be Menger co-$\pmb{\mathscr{K}}$. $X$ is a $p$-space so there is a perfect function $f$ mapping $X$ onto a separable metrizable space and hence into $[0, 1]^\omega$. By the Lemma, $f(X)$ is co-$\pmb{\mathscr{K}}$. Then $(f(X))^* \in \pmb{\mathscr{K}}$. Extend $f$ to $\widehat{f}$ mapping $\beta X$ onto the closure of $f(X)$ in $[0, 1]^{\omega}$. Being in $\pmb{\mathscr{K}}$ is a perfect property, so by Proposition \ref{Perfect property lemma}, the remainder of $f(X)$ in $[0, 1]^{\omega}$, i.e., $[0, 1]^{\omega}\setminus f(X)$ is in $\pmb{\mathscr{K}}$. But then $[0, 1]^{\omega}\setminus f(X)$ is $\sigma$-projective, so $f(X)$ is $\sigma$-projective. $f(X)$ is also Menger, so by $\sigma$-PD it is $\sigma$-compact, but $f$ is perfect, so $X$ is $\sigma$-compact.
\end{proof}

\begin{corollary}
    Menger co-$\pmb{\mathscr{P}}$ spaces are $\sigma$-compact.
\end{corollary}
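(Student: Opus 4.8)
The plan is to derive this from the immediately preceding theorem (``$\sigma$-PD implies Menger co-$\pmb{\mathscr{K}}$ $p$-spaces are $\sigma$-compact'') by showing that a Menger co-$\pmb{\mathscr{P}}$ space is automatically a co-$\pmb{\mathscr{K}}$ $p$-space. So let $X$ be Menger and co-$\pmb{\mathscr{P}}$.

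Since Menger spaces are Lindel\"of, $X$ is Lindel\"of. Because $X$ is co-$\pmb{\mathscr{P}}$ we have $X^{\ast}\in\pmb{\mathscr{P}}$, and as $\pmb{\mathscr{P}}\subseteq\pmb{\mathscr{K}}$ this gives $X^{\ast}\in\pmb{\mathscr{K}}$, i.e.\ $X$ is co-$\pmb{\mathscr{K}}$. That is the easy half; the substance is in showing $X$ is a $p$-space.

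Here is how I would argue the $p$-space half. By definition $X^{\ast}$ is a perfect preimage of a $\sigma$-projective set of reals, and a $\sigma$-projective set is separable metrizable, so $X^{\ast}$ is a perfect preimage of a separable metrizable space; that is, $X^{\ast}$ is a Lindel\"of $p$-space, and in particular (as its own continuous image) a Lindel\"of $\Sigma$-space. By Proposition \ref{remainder of s}, a space with a remainder that is a Lindel\"of $\Sigma$-space is an $s$-space, so $X$ is an $s$-space. The remaining task is to pass from ``$s$-space'' to ``$p$-space''. Proposition \ref{perfect s is p} does exactly this when $X$ is perfect, but a general Menger co-$\pmb{\mathscr{P}}$ space need not be perfect, so I would instead appeal to the stronger statement that \emph{a Lindel\"of space whose Stone-\v{C}ech remainder is a Lindel\"of $p$-space is itself a Lindel\"of $p$-space}. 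This is the natural refinement of Proposition \ref{remainder of s} for the class of Lindel\"of $p$-spaces, and it belongs to Arhangel'ski\u{\i}'s study of remainders in \cite{Arh2013}; one route is to use that ``being a Lindel\"of $p$-space'' is a perfect property, so by Proposition \ref{Perfect property lemma} the hypothesis passes to every compactification, and then to recover from $X^{\ast}\in\pmb{\mathscr{P}}$ a compactification of $X$ with a metrizable remainder and apply Arhangel'ski\u{\i}'s dichotomy (a space with metrizable remainder is either locally compact metrizable — hence, being Lindel\"of, $\sigma$-compact and so a Lindel\"of $p$-space — or already a Lindel\"of $p$-space). Nailing down this step rigorously is the main obstacle; everything else is routine bookkeeping.

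Granting that $X$ is a Lindel\"of $p$-space, it is a Menger co-$\pmb{\mathscr{K}}$ $p$-space, so the preceding theorem applies and $X$ is $\sigma$-compact (and, as there, $\sigma$-PD is what is being used). As a consistency check, Okunev's space shows this is not vacuous: it lies in $\pmb{\mathscr{K}}$ and is Menger but not $\sigma$-compact, yet it is not co-$\pmb{\mathscr{P}}$, since its Stone-\v{C}ech remainder is not Lindel\"of whereas every member of $\pmb{\mathscr{P}}$ is.
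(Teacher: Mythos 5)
Your proposal is correct and follows essentially the same route as the paper, whose entire proof is the one-line observation that these spaces are co-$\pmb{\mathscr{K}}$ $p$-spaces; you have simply made explicit the two facts this rests on, namely $\pmb{\mathscr{P}}\subseteq\pmb{\mathscr{K}}$ and the fact that $X^{\ast}$ being a Lindel\"of $p$-space forces the (Lindel\"of, since Menger) space $X$ to be a Lindel\"of $p$-space by Arhangel'ski\u{\i}'s remainder theorem. One caution: your sketched re-derivation of that last step via ``a compactification of $X$ with a \emph{metrizable} remainder'' is the weak point, since a remainder that is merely a perfect preimage of a separable metrizable space cannot obviously be traded for a metrizable remainder of $X$; you should instead cite the remainder theorem for Lindel\"of $p$-spaces directly (as the paper implicitly does), which is exactly what your correctly identified hypotheses ($X$ Lindel\"of, $X^{\ast}$ a Lindel\"of $p$-space) feed into.
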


\begin{proof}
    They are co-$\pmb{\mathscr{K}}$ $p$-spaces.
\end{proof}

\section{Examples and counterexamples}\label{Section Examples}

\begin{examp}\label{In U not C}
\emph{\textbf{A compact space in $\pmb{\mathscr{K}}$ not in $\pmb{\mathscr{C}}$.}}

Given that all the $\sigma$-projective sets are inside separable metrizable spaces, they are, at most, of size $\mathfrak{c}$. Since given $X\in\pmb{\mathscr{C}}$  there exists $B\subseteq \mathds{R}$ and $f:B\rightarrow X$ a continuous surjective function, we have that \[|X|\leq |B|\leq |\mathds{R}|= \mathfrak{c}\]

On the other hand, every compact space is a USCCV image of any $\sigma$-projective set. In particular, any ordinal $\omega_{\alpha}+1$ of cardinality strictly bigger than $\mathfrak{c}$ (with the order topology) is a Hausdorff compact space. These spaces are not in $\pmb{\mathscr{C}}$.

For an example of a space in $\pmb{\mathscr{K}}$ which is not in $\pmb{\mathscr{P}}$, see Okunev's space in \ref{Okunev space}. It is $K$-analytic but not of countable type, so its remainder is not Lindel\"of $p$, so it is not in $\pmb{\mathscr{P}}$.
\end{examp}







\begin{definition}
We say that an space is \emph{$k$-discrete} if it is the disjoint union of open compact subspaces.
\end{definition}

Clearly,

\begin{lemma}
A space is $k$-discrete if and only if it is the perfect preimage of a discrete space.
\end{lemma}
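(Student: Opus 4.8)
The plan is to read off both implications directly from the definitions, since each direction is just a translation between "partition of $X$ into open compact pieces'' and "fibres of a perfect surjection onto a discrete space''.

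For the forward direction I would start with a partition $X=\bigsqcup_{i\in I}K_i$ into open compact subspaces; after deleting the empty pieces we may assume every $K_i\neq\emptyset$. Give $I$ the discrete topology, call the resulting space $D$, and define $f\colon X\to D$ by $f(x)=i$ whenever $x\in K_i$. This is well defined and surjective precisely because the $K_i$ partition $X$. Continuity is immediate, since for any $J\subseteq I$ the preimage $f^{-1}(J)=\bigcup_{i\in J}K_i$ is a union of open sets; the fibres $f^{-1}(i)=K_i$ are compact by hypothesis; and $f$ is closed simply because $D$ is discrete, so every image $f(C)$ is a (closed) subset of $D$. Hence $f$ exhibits $X$ as a perfect preimage of a discrete space.

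For the converse, let $f\colon X\to D$ be a perfect surjection with $D$ discrete. For each $d\in D$ the singleton $\{d\}$ is open in $D$, so $f^{-1}(d)$ is open in $X$ by continuity, and it is compact because $f$ is perfect. The family $\{f^{-1}(d):d\in D\}$ is a partition of $X$ since $f$ is a surjective function, so $X$ is the disjoint union of open compact subspaces, i.e. $k$-discrete.

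I do not expect a real obstacle; the only subtleties worth flagging are that one must discard the empty pieces in the forward direction so that $f$ is genuinely surjective, and that closedness of $f$ there comes for free from discreteness of the target rather than from any hypothesis on $X$ — indeed the converse direction never uses that $f$ is closed, only that it is continuous with compact point-inverses.
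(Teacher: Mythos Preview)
Your proof is correct. The paper itself gives no proof of this lemma, merely prefacing it with ``Clearly,'' so your argument spells out exactly the obvious translation the authors had in mind.
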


\begin{examp}\label{discrete remainder}
\textbf{\emph{A $K$-analytic space with $k$-discrete Stone-\v{C}ech remainder.}}

Let $D$ be an uncountable discrete space, $C$ its one-point compactification. Name the new point $\ast$. Notice that the space $[0,1]\times C$ is compact and let $X=\left((0,1)\times D \right)\cup \left([0,1]\times\{\ast\}\right)$.

Notice that $X$ is not compact since the intersection of the sequence of decreasing closed sets $(0,1/n)\times\{d\}$ is empty. Nevertheless, $X$ is $K$-analytic (actually, $\sigma$-compact) since \[X=\left((0,1)\times D \right)\cup \left([0,1]\times\{\ast\}\right)=\left((0,1)\times C \right)\cup \left([0,1]\times\{\ast\}\right)=\]
\[=\left(\bigcup_{n\in \omega}\left[1/n, 1-1/n\right]\times C\right) \cup \left([0,1]\times\{\ast\}\right).\]

On the other hand, $[0,1]\,\times\, C$ is compact and $X$ is dense in it. Furthermore, $[0,1]\times (C \setminus X)=\{0,1\}\times D$, a discrete space homeomorphic to $D$. Thus $X$ is a $K$-analytic space such that, in one of its compactifications, the remainder is homeomorphic to $D$.

By the universal property of the Stone-\v{C}ech compactification, there is a function $f:\beta X \rightarrow [0,1]\times C$ such that $f \restrict X^{\ast}$ is perfect and $f(X^{\ast})=\left([0,1]\times C\right) \setminus X$ which is homeomorphic to $D$. Therefore $X^{\ast}$ is a perfect preimage of the uncountable discrete space $D$.
\end{examp}

\begin{examp}\label{A co-$K$-analytic not K}
\textbf{\emph{A \v{C}ech-complete NLC space that is not Lindel\"of.}}




Let $D$ be the discrete space of size $\kappa$ for some $\kappa>\omega$. Since $D$ is locally compact, we can define its one-point compactification $\alpha D$. Using the universal property of the Stone-\v{C}ech compactification, we can extend the identity function to a surjective function $i:\beta D\rightarrow \alpha D$ with $i(\beta D\setminus D)=\alpha D\setminus D$. Since $\alpha D\setminus D$ is a single point, $\beta D\setminus D=i^{-1}(\alpha D\setminus D)$ is compact (a closed set in a compact space). This shows that $D$ is \v{C}ech-complete since its Stone-\v{C}ech remainder is a compact space.

By Proposition \ref{Cech complete prod}, $D\times \omega^{\omega}$ is a \v{C}ech-complete space. It is an NLC space that is not Lindel\"of since $D$ is uncountable. Therefore, it is not $K$-analytic, but its Stone-\v{C}ech remainder is NLC $K$-analytic since the remainder is $\sigma$-compact.
\end{examp}

\begin{definition}
A topological space $X$ is \emph{Rothberger} if given any countable sequence of open covers of $X$ $\langle\mathcal{U}_1, \mathcal{U}_2,...., \mathcal{U}_n,...\rangle $ there are $V_{i}\in \mathcal{U}_i$ such that $\{V_{i}:i\in \omega\}$ is also an open cover of $X$.
\end{definition}

\begin{examp}\label{Okunev space}
\textbf{\emph{A $K$-analytic space with a non-Lindel\"of remainder.}}

Let $A(\mathds{P})$ be the Alexandroff duplicate of the irrational numbers $\mathds{P}$. In other words, $A(\mathds{P})$ is the set $\mathds{P}\times \{0,1\}$ with the topology generated by $\{(r,1)\}$ for all $r\in \mathds{P}$ and $(U\times\{0\})\,\cup\, (\mathds{P}\setminus A)\times \{1\}$ where $U$ is open in the usual topology of $\mathds{P}$ and $A\subseteq \mathds{P}$ is finite. We will first show that this space is of countable type. Given a compact set $K\subseteq A(\mathds{P})$, we know that $K\,\cap\, (\mathds{P}\times \{1\})$ is compact, open and finite (so it has countable character). On the other hand, $K\cap (\mathds{P}\times \{0\})$ is a compact subset of $\mathds{P}$ which also has countable character. Therefore, $K$ has countable character in $A(\mathds{P})$.

\textbf{Okunev's space} $X$ is the quotient space $A(\mathds{P})/(\mathds{P}\times\{0\})$. This is the topological space obtained by collapsing the non-discrete copy of the irrationals to a single point. Okunev’s space is a $K$-analytic (see e.g. \emph{\cite{Tall20}}) – hence Lindel\"of \textemdash  space which is not of countable type, since it is $K$-analytic, Rothberger, and not $\sigma$-compact (see \emph{\cite{Tall20}}), but 
\begin{propo}
\emph{\cite{Tall20}} $K$-analytic Rothberger spaces of countable type are $\sigma$-compact.
\end{propo}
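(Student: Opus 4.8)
The plan is to reduce the statement to the classical fact that Menger analytic subsets of Polish spaces are $\sigma$-compact, using a perfect map onto a separable metrizable space as the bridge. So let $X$ be $K$-analytic, Rothberger, and of countable type.

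First I would invoke the structural fact that a $K$-analytic space of countable type is a Lindel\"of $p$-space. Since $X$ is $K$-analytic it is in particular a Lindel\"of $\Sigma$-space, and it is classical (see e.g.\ \cite{Arh2013}) that a Lindel\"of $\Sigma$-space of countable type is a Lindel\"of $p$-space. This is the one genuinely non-routine ingredient: it is precisely the step where the weak hypothesis ``countable type'' is upgraded to the concrete datum of a perfect surjection $f\colon X\to M$ onto a separable metrizable space $M$ (Definition \ref{def p-space}), so I expect this reduction to require the most care.

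Granting such an $f$, the rest is bookkeeping with perfect maps. The image $M=f(X)$ is a continuous image of the $K$-analytic space $X$, hence $K$-analytic by Proposition \ref{K closure under countable}, and being metrizable it is therefore analytic by Proposition \ref{K analytic metrazable}; moreover $M$ is a continuous image of the Rothberger space $X$, hence Rothberger, hence Menger. By Hurewicz's theorem that Menger analytic sets are $\sigma$-compact \cite{Hu}, we may write $M=\bigcup_{n\in\omega}M_n$ with each $M_n$ compact. Then $X=\bigcup_{n\in\omega}f^{-1}(M_n)$, and each restriction $f\restrict f^{-1}(M_n)\colon f^{-1}(M_n)\to M_n$ is perfect by Proposition \ref{perfect restriction}; since a space admitting a perfect map onto a compact space is compact, each $f^{-1}(M_n)$ is compact, and therefore $X$ is $\sigma$-compact.

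Thus the difficulty is entirely isolated in the first reduction: once one knows that a $K$-analytic space of countable type carries a perfect map onto a separable metrizable space, the Rothberger hypothesis finishes the job via the classical Hurewicz theorem. I would note in passing that the argument uses only ``Menger'', not the full strength of ``Rothberger'', so it actually yields the formally stronger statement that $K$-analytic Menger spaces of countable type are $\sigma$-compact.
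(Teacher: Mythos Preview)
The paper does not supply a proof of this proposition; it is quoted from \cite{Tall20} inside Example \ref{Okunev space} and used there without argument, so there is no in-paper proof to compare against.

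Your argument is correct and is the natural one. The one substantive external input is the Arhangel'ski\u{\i} result that a Lindel\"of $\Sigma$-space of countable type is a Lindel\"of $p$-space; you flag this accurately as the non-routine step. Once you have the perfect surjection $f:X\to M$ onto a separable metrizable space, your remaining steps coincide with the ZFC half of Theorem \ref{thm:Menger perfect preimages} in this paper: $M$ is $K$-analytic and metrizable, hence analytic by Proposition \ref{K analytic metrazable}; $M$ is Menger as a continuous image of a Menger space; Hurewicz's theorem gives $M$ $\sigma$-compact; and $\sigma$-compactness pulls back along the perfect map via Proposition \ref{perfect restriction}. Your closing observation that only the Menger hypothesis is used, so that the formally stronger ``$K$-analytic Menger spaces of countable type are $\sigma$-compact'' holds, is also correct.
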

Since $X \times \mathds{Q}$ is also $K$-analytic, Rothberger, and not $\sigma$-compact, it is not of countable type, but also is nowhere locally compact.

Using Proposition \ref{remainder of countable type}, we know that $(X \times \mathds{Q})^{\ast}$ has a $K$-analytic remainder but is not Lindel\"of.
\end{examp}

\begin{examp}\label{Peng L-space}
\textbf{\emph{A Hereditarily Lindel{\"o}f projectively countable space with non-Lindel{\"o}f square}}

Moore \emph{\cite{Moore2005}} has an example of a hereditarily Lindel\"of space $X$ such that $X^2$ is not Lindel\"of (proved in \emph{\cite{Peng15}}), but $X$ is projectively countable, hence---in particular---projectively $\sigma$-projective. Notice that this space is not in $\pmb{\mathscr{K}}$ because $X^2$ is not Lindel\"of.
\end{examp}

\section{Future projections and questions}\label{Section future}

Although we have concentrated on the $\sigma$-projective sets of reals, most of our results extend to those sets of reals that are in $L(\mathds{R})$. Somewhat larger large cardinals than needed for $\sigma$-PD yield determinacy for $\mathcal{P}(\mathds{R}) \cap L(\mathds{R})$. All the results about Menger implying $\sigma$-compact or Hurewicz go through unchanged. One point that is not completely obvious, but is true, is that $\mathcal{P}(\mathds{R}) \cap L(\mathds{R})$ is closed under countable intersections, countable unions and continuous real-valued images. The point is that each of these can be coded by a real. That is clear for the first two; for the third, it is because continuous real-valued functions on the reals are determined by their values on rational arguments, so again this can be coded by a real.

If one is satisfied with consistency rather than direct implication, it should be noted that all our ``Menger implies $\sigma$-compact'' results, both for the $\sigma$-projective sets and for $\mathcal{P}(\mathds{R}) \cap L(\mathds{R})$, are equiconsistent with an inaccessible cardinal. For discussion, see \cite{TTT21}.

\bibliographystyle{alpha}
\bibliography{NTG}

\end{document}